
\documentclass[reqno,twoside]{amsart}
\usepackage{amsmath}
\usepackage{amsfonts}
\usepackage{amssymb}
\usepackage{enumerate}

\usepackage{a4wide,amsmath,amssymb,latexsym,amsthm}

\setlength{\textwidth}{16cm}
\setlength{\textheight}{20 cm}


\setcounter{secnumdepth}{2} \setcounter{section}{0}



\numberwithin{equation}{section}
\usepackage{mathrsfs,mathtools,epic,bm}
\usepackage{hyperref}
\hypersetup{colorlinks=true,linkcolor=blue,filecolor=mangeta,urlcolor= cyan}

\newtheorem{theorem}{Theorem}[section]
\newtheorem{proposition}[theorem]{Proposition}
\newtheorem{lemma}[theorem]{Lemma}

\theoremstyle{definition}
\newtheorem{definition}[theorem]{Definition}

\newtheorem{remark}[theorem]{Remark}

\newtheorem{assumption}[theorem]{Assumption}

\numberwithin{equation}{section}

\def \dis {\displaystyle}

\def \R {\mathbb{R}}

\def \N {\mathbb{N}}

\def \V {\cal V}

\def \U {{\mathcal U}}

\def \hvarphi \hat{\varphi}
\def \dx{\mathrm{d}x}
\def \dy {\mathrm{d}y}
\def \dq {\mathrm{d}x \, \mathrm{d}t}

\def\Om{\Omega}

\def \R {\mathbb{R}}

\def \N {\mathbb{N}}

\def \V {\mathbb{ V}}

\def \U {\mathcal{U}}

\keywords{Nonlinear fractional equation,  optimal solutions,  Pontryagin principle,  First- and second-order optimality conditions.}
\subjclass[2010]{ 49J20, 35J61, 35R11.}

 	\begin{document}
	\title[Optimal control of a nonlinear fractional elliptic equation]{Optimal control of a class of semilinear fractional elliptic equations}

	\author{Cyrille Kenne}
\address{Cyrille Kenne, Laboratoire des Mat\'eriaux et Mol\'ecules en Milieu Agressif (L3MA UR$4\_1$), UFR STE et IUT, Universit\'e des Antilles, 97275 Schoelcher, Martinique.}
\email{kenne853@gmail.com}
\author{Gis\`{e}le Mophou}
\address{Gis\`ele Mophou, Laboratoire L.A.M.I.A., D\'epartement de Math\'ematiques et Informatique, Universit\'{e} des Antilles, Campus Fouillole, 97159 Pointe-\`a-Pitre,(FWI), Guadeloupe, Laboratoire  MAINEGE, Universit\'e Ouaga 3S, 06 BP 10347 Ouagadougou 06, Burkina Faso.}
\email{gisele.mophou@univ-antilles.fr}
\author{Mahamadi Warma}
\address{Mahamadi Warma, Department of Mathematical Sciences and the Center for Mathematics and Artificial Intelligence (CMAI), George Mason University,  Fairfax, VA 22030, USA.}
\email{mwarma@gmu.edu}
\thanks{The third  author is partially supported by  US Army Research Office (ARO) under Award NO: W911NF-20-1-0115}
	
	
	
	\begin{abstract}
	In this paper, a class of semilinear fractional elliptic equations associated to the spectral fractional Dirichlet Laplace operator is considered. We establish the existence of optimal solutions as well as a minimum principle of Pontryagin type and the first order necessary optimality conditions of associated optimal control problems.  Second order conditions for optimality are also obtained for $L^{\infty}$ and $L^2-$ local solutions under some structural assumptions.
\end{abstract}
	\maketitle	
	
	\section{Introduction}
Let $\Om\subset\R^N$ ($N\geq 2$) be an open and bounded domain with  boundary $\partial \Om$. In this paper, we are interested in the following control problem: Find
\begin{equation}\label{opt}
\inf_{u\in \U} J(u):=\int_{\Omega}L(x,y(x),u(x))\,\dx,
\end{equation}
subject to the constraints that $y$ solves the semilinear fractional elliptic diffusion equation
\begin{equation}\label{model}
\left\{
\begin{array}{rllll}
(-\Delta_D)^sy&=& F(x,y,u) \qquad &\hbox{in}& \Omega,\\
y&=&0  &\hbox{on}& \partial \Omega,
\end{array}
\right.
\end{equation}
and the set of admissible controls is given by	
\begin{equation}\label{defuad}
\U:=\big\{v\in L^\infty(\Omega): \alpha\leq v(x)\leq \beta \mbox{ for a.e.  }  x\in\Omega, \, \alpha,\beta\in \R,\, \beta>\alpha\big\}.
\end{equation}
In \eqref{opt}, $L:\Omega\times\R\times\R\to \R$ is a given function satisfying suitable conditions.
In \eqref{model}, $y$ denotes the state, $u$ is the control function, $(-\Delta_D)^s$ denotes the spectral fractional Dirichlet Laplace operator of order $0<s<1$,  that is, the fractional $s$ powers of the realization in $L^2(\Omega)$ of the Laplace operator $-\Delta$ with zero Dirichlet boundary condition on $\partial \Omega$ and  $F:\Omega\times\R\times\R\to \R$ is a given function satisfying suitable conditions.  We notice that the boundary condition in \eqref{model} can be dropped since it is already included in the definition
of the operator $(-\Delta_D)^s$.  The precise assumptions on the functions $L$ and $F$ will be given in Section \ref{sec-as}.

The main purpose of this paper is to solve the optimal control problem \eqref{opt}-\eqref{model}. The control appears nonlinearly in the state equation and the well known classical methods do not apply.  
 The optimal control of nonlinear systems with control appearing nonlinearly in the state equation has been considered before by many researchers (see e.g. \cite{casas2000,casas1995,casas1997,eppler1997, raymond1998} and their references).
In \cite{raymond1999}, J.P. Raymond and H. Zidani addressed the optimal control of problems governed by semilinear parabolic equations. The controls appeared in a nonlinear form in the state equation. After some regularity results, they obtained using the Hamiltonian Pontryagin's principle the optimality conditions. In \cite{casas2020o}, E. Casas and F. Tr\"{o}ltzsch considered the optimal control of a semilinear elliptic equation with Neumann boundary condition in space dimension $N=2$ or $N=3$ where the control appears nonlinearly in the sate equation. Let us also mention that their control was not explicitly included in the objective functional. Using a measurable selection technique, they proved the existence of optimal controls. They also derived the first and second orders conditions for optimality.  The case of second order elliptic operators with a Dirichlet boundary condition was previously studied by Casas and Yong \cite{casas1995}.
All the above mentioned papers were dealing with integer order operators, that is, classical second order elliptic operators with standard boundary conditions (Dirichlet and Neumann).

In the case of fractional order operators, only few papers considered it to the best of our knowledge. In \cite{kien2022},  B. Kien et \textit{al.} considered a fractional differential equation of order $0<\alpha<1$ with control constraints. The control appeared in a nonlinear form in the state equation and they established the first and second order optimality conditions for local optimal solutions. A theory of no-gap second-order conditions were also obtained in the case $1/2<\alpha<1$.
R. Kamocki \cite{kamocki2021} considered an optimal control problem containing a control system described by a nonlinear partial differential equation with the fractional Dirichlet-Laplacian associated to an integral cost. He proved by means of the Filippov's theorem the existence of optimal solutions.  The papers \cite{AntilWarma,otarola2022} considered similar problems where the control appears linearly in the state equation and the underlying operator can be the fractional power of second order uniformly elliptic operators with Dirichlet boundary condition or the integral fractional Laplace operator with zero Dirichlet exterior condition.  The linear case with non zero Dirichlet exterior condition has been first investigated in \cite{AKW}.

In this paper, we consider the optimal control problem \eqref{opt}-\eqref{model}. The system is nonlinear with respect to both the state $y$ and the control $u$. We prove some existence and regularity results for the state equation \eqref{model} and obtain the existence of optimal solutions to the control problem \eqref{opt}-\eqref{model}. The latter result is obtained by using a measurable selection theorem. Next, we derive the first order optimality conditions and obtain a pointwise Pontryagin's principle. Finally, we derive the second order conditions for optimality. 

The paper is organized as follows. In Section \ref{prelim}, we introduce the function spaces and state the assumptions needed in the sequel. Section \ref{existe} deals with the existence and regularity results for the state equation \eqref{model} where the main results are obtained in Theorems \ref{theoexistence} and \ref{theoregularity}. In Section \ref{control}, we prove the existence of optimal solutions (Theorem \ref{existcontrol1}). Section \ref{sec-5} is devoted to the first order necessary optimality conditions that are obtained in Theorem \ref{theoSO} and the minimum principle of Pontryagin type proved in Proposition \ref{pr1}.
In Section \ref{second} we derive the second order necessary and sufficient conditions for local optimality.

	\section{Preliminary results and assumptions} \label{prelim}	
	For the sake of completeness,  we give some well-known results that are used throughout the paper and we state the assumptions on the nonlinearities $L$ and $F$.  Throughout the paper without any mention $\Omega\subset\mathbb R^N$ is an arbitrary bounded open set.  We shall specify if a regularity on $\Omega$ is needed.

\subsection{Preliminaries}
	Let
	$\mathcal{D}(\Omega)$ be the space of test functions on $\Omega$, that is, the space of all  infinitely continuously differentiable functions with compact support in $\Omega$. We let 
	$$H^1_0(\Omega)=\overline{\mathcal{D}(\Omega)}^{H^1(\Omega)}$$ 
	where
		$$H^1(\Omega)=\Big\{u\in L^2(\Omega):\int_{\Omega}|\nabla u|^2\,\dx<\infty\Big\}$$
	is the first order Sobolev space endowed with the norm
	$$\|u\|_{H^1(\Omega)}=\Big(\int_{\Omega}|u|^2\,\dx+\int_{\Omega}|\nabla u|^2\,\dx\Big)^{\frac{1}{2}}.$$	
Let $-\Delta_D$ be the realization on $L^2(\Omega)$ of the Laplace operator $-\Delta$ with the zero Dirichlet boundary condition. That is, $-\Delta_D$ is the positive and self-adjoint operator on $L^2(\Omega)$ associated with the closed, bilinear, coercive  and symmetric form
$$\mathcal{A}_D(u,v)=\int_{\Omega}\nabla u\cdot \nabla v\,\dx,\;\;\;u,v\in H^1_0(\Omega),$$
in the sense that
\begin{equation}\label{space}
\left\{
\begin{array}{llll}
D(\Delta_D)=\Big\{u\in H^{1}_0(\Omega):\,\exists w\in L^2(\Omega),\; \mathcal{A}_D(u,v)=(w,v)_{L^2(\Omega)},\;\;\forall v\in H^1_0(\Omega) \Big\},\\
-\Delta_D u=w.
\end{array}
\right.
\end{equation}
For instance if $\Omega$ has a smooth boundary,  say of class $C^2$,  then $D(\Delta_D)=H^2(\Omega)\cap H^1_0(\Omega)$, where
$$H^2(\Omega)=\Big\{u\in H^1(\Omega):\partial_{x_j}u\in H^1(\Omega),\; j=1,2,...,N\Big\}.$$	
It is well-known that $-\Delta_D$ has a compact resolvent and its eigenvalues form a non-decreasing sequence	$0<\lambda_1\leq\lambda_2\leq\cdots\leq \lambda_n\leq \cdots$ of real numbers satisfying $\dis \lim_{n\to \infty} \lambda_n=\infty$. We denote by $(\varphi_n)$ the orthonormal basis of eigenfunctions associated with the eigenvalues $(\lambda_n)$.
	
For $0<s<1$, we define the fractional order Sobolev space
	\begin{align*}
		H^s(\Omega):=\left\{u\in L^2(\Omega):\; \int_{\Omega}\int_{\Omega}\frac{|u(x)-u(y)|^2}{|x-y|^{N+2s}}\;dxdy<\infty\right\}
	\end{align*}
	and we endow it with the norm given by
	\begin{align*}
		\|u\|_{H^s(\Omega)}=\left(\int_{\Omega}|u|^2\;dx+\int_{\Omega}\int_{\Omega}\frac{|u(x)-u(y)|^2}{|x-y|^{N+2s}}\;dxdy\right)^{1/2}.
	\end{align*}
	We set
	$$
H^s_0(\Omega)=\overline{\mathcal{D}(\Omega)}^{H^s(\Omega)},
	$$
	and 
	$$H^\frac{1}{2}_{00}(\Omega):=\left\{ u\in H^{\frac{1}{2}}(\Omega): \dis \int_{\Omega}\frac{u^2(x)}{\mbox{dist}(x,\partial \Omega)}\,\dx<\infty\right\}$$
	where $\mbox{dist}(x,\partial \Omega)$, $x\in\Omega$, denotes the distance from $x$ to $\partial\Omega$.
	
We have that $H_0^s(\Omega)$ is a Hilbert space and if $1/2<s<1$, then the norm on $H_0^{s}(\Omega)$ is equivalent to the norm
	\begin{equation}\label{norm0bisinter}
		\|w\|_{H_0^{s}(\Omega)}:=\left(\int_{\Omega}\int_{\Omega}
		\frac{(w(x)-w(y))^2}{|x-y|^{N+2s}}\;\dx\, \dy\right)^{1/2}.
	\end{equation}
Since $\Omega$ is assumed to be bounded we have the following continuous embedding:
	\begin{equation}\label{equality0}
	H^s_0(\Omega)\hookrightarrow\left\{
	\begin{array}{lllll}
	L^{\frac{2N}{N-2s}}(\Omega)&\text{if}& N>2s,\\
	L^p(\Omega),\;\;p\in [1,\infty) &\text{if}& N=2s,\\
	\mathcal{C}^{0,s-\frac{N}{2}}(\bar{\Omega}) &\text{if}& N<2s.
	\end{array}
	\right.
	\end{equation}
For any $s \geq 0$, we also introduce the following fractional order Sobolev space:
$$\mathbb{ H}^s(\Omega):=\left\{ u=\dis \sum_{i=1}^{\infty}u_n\varphi_n\in L^2(\Omega): \dis \|u\|^2_{\mathbb{H}^s(\Omega)}:=\sum_{i=1}^{\infty}\lambda^s_nu^2_n<\infty \right\},$$	
where we recall that $(\lambda_n)$ are the eigenvalues of $-\Delta_D$ with associated normalized eigenfunctions $(\varphi_n)$ and 	
$$u_n:=(u,\varphi_n)_{L^2(\Omega)}=\int_{\Omega}u\varphi_n\, \dx.$$
It is nowadays well-known that
\begin{equation}\label{equality1}
\mathbb{H}^s(\Omega)=\left\{
\begin{array}{lllll}
H^s_0(\Omega)&\text{if}& \dis s\ne \frac{1}{2},\\
H^\frac{1}{2}_{00}(\Omega) &\text{if}& s=\dis \frac{1}{2},
\end{array}
\right.
\end{equation}
It follows from \eqref{equality1} that the embedding \eqref{equality0} holds with $H^s_0(\Omega)$ replaced by $\mathbb{H}^s(\Omega)$. For more details on fractional order Sobolev spaces we refer to \cite{nezza2012,grisvard2011,War} and their references.

\begin{definition}
The spectral fractional Dirichlet Laplacian of order $s\ge 0$ is defined  by 
$$ D((-\Delta_D)^s)=\mathbb{H}^s(\Omega),\quad (-\Delta_D)^su=\sum_{n=1}^{\infty}\lambda^s_nu_n\varphi_n\;\;\;\;\text{with}\;\; u_n=\int_{\Omega}u\varphi_n\,\dx.$$
\end{definition}	

We notice that in this case we have that
\begin{equation}\label{norm}
\|u\|_{\mathbb{H}^s(\Omega)}=\|(-\Delta_D)^{\frac{s}{2}}u\|_{L^2(\Omega)}.
\end{equation}	
Note that $\mathcal{D}(\Omega)\hookrightarrow\mathbb{H}^s(\Omega)\hookrightarrow L^2(\Omega)\hookrightarrow  \mathbb{ H}^{-s}(\Omega)$, so, the operator $(-\Delta_D)^s$ is unbounded, densely defined and with bounded
inverse $(-\Delta_D)^{-s}$ in $L^2(\Omega)$.	But it can also be viewed as a bounded operator from $\mathbb{ H}^s(\Omega)$ into its dual $\mathbb{ H}^{-s}(\Omega):=(\mathbb{ H}^s(\Omega))^\star$.

Next, we consider the following linear elliptic problem:
\begin{equation}\label{LE}
(-\Delta_D)^s\varphi+b(x)\varphi=f
\end{equation}
where we assume that $b\in L^\infty(\Omega)$ and is nonnegative.

\begin{definition}
A function $\varphi\in \mathbb H^s(\Omega)$ is called a weak solution of \eqref{LE},  if the equality
\begin{align}\label{A0}
\int_{\Omega}(-\Delta)^{s/2}\varphi (-\Delta)^{s/2}\phi\;dx +\int_{\Omega}b(x)\varphi\phi\;dx=\langle f,\phi\rangle_{\mathbb{ H}^{-s}(\Omega),\mathbb{ H}^{s}(\Omega)},
\end{align}
holds for every $\phi\in \mathbb H^s(\Omega)$.
\end{definition}

We have the following result of existence and regularity of weak solutions. We refer to \cite{antil2017,AntilWarma,RO-SJ-EX} and their references for the complete proof.

\begin{theorem}\label{theo-23}
For every $f\in \mathbb{ H}^{-s}(\Omega)$, Equation \eqref{LE} has a unique weak solution $\varphi\in  \mathbb H^s(\Omega)$ and there is a  constant   $C=C(N,s,\Omega)>0$ such that
\begin{align}\label{A1}
\|\varphi\|_{L^2(\Omega)}\le \|\varphi\|_{ \mathbb H^s(\Omega)}\le C\|f\|_{ \mathbb{ H}^{-s}(\Omega)}.
\end{align}
In addition,  the following assertions hold:
\begin{enumerate}

\item If $N<2s$, then $\varphi\in \mathcal{C}^{0,s-\frac{N}{2}}(\bar{\Omega})$ and there is a constant $C>0$ such that
\begin{align}\label{A1-1}
\|\varphi\|_{\mathcal{C}^{0,s-\frac{N}{2}}(\bar{\Omega})}\le C\|f\|_{ \mathbb{ H}^{-s}(\Omega)}.
\end{align}

\item If $N= 2s$, then $\varphi\in L^p(\Omega)$  for every $p\in [1,\infty)$ and there is a constant $C>0$ such that 
\begin{align}\label{A1-0}
\|\varphi\|_{L^p(\Omega)}\le C\|f\|_{ \mathbb{ H}^{-s}(\Omega)}.
\end{align}
\item If $f\in L^p(\Omega)$ for some $p>N/2s$, then $\varphi\in L^\infty(\Omega)$ and there is a constant $C>0$ such that
\begin{align}\label{A2}
\|\varphi\|_{L^\infty(\Omega)}\le C\|f\|_{L^p(\Omega)}.
\end{align}
\item If $f\in L^2(\Omega)$, $N>2s$  and $1\le r<\frac{N}{N-2s}$, then  $\varphi\in L^r(\Omega)$ and there is a constant $C>0$ such that
\begin{align}\label{A3}
\|\varphi\|_{L^r(\Omega)}\le C\|f\|_{L^1(\Omega)}.
\end{align}
\end{enumerate}
\end{theorem}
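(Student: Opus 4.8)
The plan is to get \eqref{A1} from the Lax--Milgram theorem plus the energy estimate, to read off assertions (1) and (2) from \eqref{A1} together with the embeddings \eqref{equality0}--\eqref{equality1}, to prove the $L^\infty$ bound (3) by a Stampacchia truncation argument, and to deduce the $L^1$--$L^r$ estimate (4) from (3) by duality. First I would introduce the bilinear form
\[
a(\varphi,\phi):=\int_\Omega(-\Delta)^{s/2}\varphi\,(-\Delta)^{s/2}\phi\,\dx+\int_\Omega b(x)\varphi\phi\,\dx,\qquad\varphi,\phi\in\mathbb H^s(\Omega),
\]
which by \eqref{norm} and $b\in L^\infty(\Omega)$ is bounded on $\mathbb H^s(\Omega)$, and which is coercive since $b\ge0$ and \eqref{norm} give $a(\varphi,\varphi)\ge\|\varphi\|_{\mathbb H^s(\Omega)}^2$. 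For $f\in\mathbb H^{-s}(\Omega)$ the Lax--Milgram theorem then yields a unique $\varphi\in\mathbb H^s(\Omega)$ satisfying \eqref{A0}. Taking $\phi=\varphi$ in \eqref{A0} and using $a(\varphi,\varphi)\ge\|\varphi\|_{\mathbb H^s(\Omega)}^2$ and $|\langle f,\varphi\rangle_{\mathbb H^{-s}(\Omega),\mathbb H^{s}(\Omega)}|\le\|f\|_{\mathbb H^{-s}(\Omega)}\|\varphi\|_{\mathbb H^s(\Omega)}$ gives $\|\varphi\|_{\mathbb H^s(\Omega)}\le\|f\|_{\mathbb H^{-s}(\Omega)}$, while $\|\varphi\|_{L^2(\Omega)}\le C\|\varphi\|_{\mathbb H^s(\Omega)}$ follows from $\lambda_n\ge\lambda_1$ and the definition of $\mathbb H^s(\Omega)$; this is \eqref{A1}. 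Assertions (1) and (2) are then immediate: by \eqref{equality0}--\eqref{equality1}, $\mathbb H^s(\Omega)\hookrightarrow\mathcal C^{0,s-\frac N2}(\bar{\Omega})$ when $N<2s$ and $\mathbb H^s(\Omega)\hookrightarrow L^p(\Omega)$ for every $p\in[1,\infty)$ when $N=2s$, so \eqref{A1-1} and \eqref{A1-0} follow from \eqref{A1}.

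For assertion (3) I may assume $N>2s$ (the case $N\le 2s$ being contained in (1)--(2)) and write $2_s^{*}:=\frac{2N}{N-2s}$. I would run Stampacchia's truncation method: for $k>0$ test \eqref{A0} with $\phi=(\varphi-k)^+\in\mathbb H^s(\Omega)$. Using the truncation (Stroock--Varopoulos type) inequality for the spectral fractional Laplacian,
\[
\int_\Omega(-\Delta)^{s/2}\varphi\,(-\Delta)^{s/2}(\varphi-k)^+\,\dx\ \ge\ \big\|(\varphi-k)^+\big\|_{\mathbb H^s(\Omega)}^2,
\]
together with $\int_\Omega b\varphi(\varphi-k)^+\,\dx\ge0$ (since $b\ge0$ and $\varphi(\varphi-k)^+\ge0$ on $A_k:=\{x\in\Omega:\varphi(x)>k\}$), \eqref{A0} gives $\big\|(\varphi-k)^+\big\|_{\mathbb H^s(\Omega)}^2\le\int_{A_k}f(\varphi-k)^+\,\dx$. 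Hölder's inequality (with exponents $p$, $2_s^{*}$ and the conjugate one) and the embedding $\mathbb H^s(\Omega)\hookrightarrow L^{2_s^{*}}(\Omega)$ then yield
\[
\big\|(\varphi-k)^+\big\|_{L^{2_s^{*}}(\Omega)}\le C\|f\|_{L^p(\Omega)}\,|A_k|^{\,1-\frac1p-\frac1{2_s^{*}}},
\]
where the exponent $1-\frac1p-\frac1{2_s^{*}}$ exceeds $\frac1{2_s^{*}}$ precisely because $p>N/2s$. Comparing $A_k\subset A_h$ for $h<k$ and invoking the classical Stampacchia lemma gives $\varphi^+\in L^\infty(\Omega)$ with $\|\varphi^+\|_{L^\infty(\Omega)}\le C\|f\|_{L^p(\Omega)}$; the same argument applied to $-\varphi$ bounds $\varphi^-$, giving \eqref{A2}. (Alternatively one can bootstrap from $\varphi\in L^{2_s^{*}}(\Omega)$ using the Green function bound $0\le G_s(x,y)\le C|x-y|^{2s-N}$ for $(-\Delta_D)^{-s}$.)

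For assertion (4), since $f\in L^2(\Omega)\hookrightarrow\mathbb H^{-s}(\Omega)$ the function $\varphi$ is the weak solution from the first step and $\varphi\in\mathbb H^s(\Omega)\hookrightarrow L^{2_s^{*}}(\Omega)\hookrightarrow L^r(\Omega)$ because $r<\frac N{N-2s}<2_s^{*}$. I would argue by duality: for $g\in L^{r'}(\Omega)$ one has $r'=\frac r{r-1}>\frac N{2s}$, and since $\frac N{2s}\ge\frac{2N}{N+2s}$ (as $N>2s$) we get $L^{r'}(\Omega)\hookrightarrow\mathbb H^{-s}(\Omega)$ on the bounded set $\Omega$; let $w\in\mathbb H^s(\Omega)$ solve $(-\Delta_D)^sw+bw=g$ (the operator $(-\Delta_D)^s+b$ being self-adjoint). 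By assertion (3), $w\in L^\infty(\Omega)$ with $\|w\|_{L^\infty(\Omega)}\le C\|g\|_{L^{r'}(\Omega)}$, and the symmetry of $a$ gives
\[
\int_\Omega\varphi g\,\dx=a(w,\varphi)=a(\varphi,w)=\int_\Omega f w\,\dx\le\|f\|_{L^1(\Omega)}\|w\|_{L^\infty(\Omega)}\le C\|f\|_{L^1(\Omega)}\|g\|_{L^{r'}(\Omega)}.
\]
Taking the supremum over $g$ in the unit ball of $L^{r'}(\Omega)$ yields $\|\varphi\|_{L^r(\Omega)}\le C\|f\|_{L^1(\Omega)}$, which is \eqref{A3}. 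The genuinely hard part is (3): it is the only place where the fractional nature of $(-\Delta_D)^s$ must be exploited, through the truncation inequality and the sharp form of the Stampacchia lemma (equivalently, the $|x-y|^{2s-N}$ kernel bound for $(-\Delta_D)^{-s}$); everything else is soft functional analysis and the quoted embeddings, which is presumably why the authors refer to \cite{antil2017,AntilWarma,RO-SJ-EX} for the complete proof.
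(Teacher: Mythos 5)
Your proposal is correct, and for the Lax--Milgram existence/uniqueness step, the energy estimate \eqref{A1}, and assertions (a)--(b), it is exactly what the paper does (the paper even cites the same embedding \eqref{equality0}). The difference is in (c) and (d): the paper simply outsources these to the cited literature --- (c) to Antil--Warma and (d) to Proposition~1.4 of Ros-Oton et al.\ --- whereas you supply self-contained proofs. Both of your arguments are sound and standard: the Stampacchia iteration for (c), with the exponent condition $1-\tfrac1p-\tfrac1{2_s^*}>\tfrac1{2_s^*}$ correctly reduced to $p>N/2s$, and the self-adjoint duality argument for (d), with $r<\tfrac{N}{N-2s}$ correctly translated into $r'>\tfrac{N}{2s}$ so that (c) applies to the adjoint solution. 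The trade-off is the obvious one: the paper is shorter but opaque, yours is longer but explains where the fractional structure actually enters, namely the truncation (Stroock--Varopoulos) inequality
\[
\int_\Omega(-\Delta_D)^{s/2}\varphi\,(-\Delta_D)^{s/2}(\varphi-k)^+\,\dx\ \ge\ \big\|(\varphi-k)^+\big\|_{\mathbb H^s(\Omega)}^2.
\]
You invoke this inequality as known but do not justify it; it is in fact the only nontrivial spectral-fractional input, and it is worth recording that it follows from the sub-Markovian property of the Dirichlet heat semigroup $e^{t\Delta_D}$ (Beurling--Deny conditions) via the subordination formula $(-\Delta_D)^s=\frac{1}{|\Gamma(-s)|}\int_0^\infty(I-e^{t\Delta_D})\,t^{-1-s}\,dt$: since $v:=\min(\varphi,k)\le k$ a.e.\ and $e^{t\Delta_D}$ has a nonnegative, sub-Markovian kernel, one gets $\langle v-e^{t\Delta_D}v,(\varphi-k)^+\rangle\ge 0$ for every $t>0$, which after integrating in $t$ is precisely the claim. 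Two small glosses you should tighten: in (c), the case $N=2s$ is not actually ``contained in (1)--(2),'' since (2) only gives $\varphi\in L^p(\Omega)$ for $p<\infty$ and not $L^\infty$; however the same Stampacchia iteration works by choosing any $m<\infty$ with $m(1-\tfrac1p)>2$ and using $\mathbb H^s(\Omega)\hookrightarrow L^m(\Omega)$. And the left inequality in \eqref{A1} as stated in the paper (constant $1$) should really carry a constant $\lambda_1^{-s/2}$, exactly as you wrote, so your version is the correct one.
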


\begin{proof}
The existence and uniqueness of solutions together with the estimate \eqref{A1}  is a simple application of the classical Lax-Miligram Lemma.  Parts (a) and (b) follow from \eqref{A1} and  the continuous embedding \eqref{equality0}.
The proof of Part (c) can be found in  \cite{antil2017,AntilWarma} and the references therein. For Part (d) we refer to \cite[Proposition 1.4]{RO-SJ-EX}.
\end{proof}

\subsection{General assumptions on the nonlinearities}\label{sec-as}

	We make the following assumptions on the function $F$ involved in the state equation \eqref{model}.

\begin{assumption}\label{ass1}
	The measurable function $F:\Omega\times\mathbb{R}\times\mathbb R\rightarrow \mathbb{R}$ is a continuous function of class $\mathcal{C}^2$ with respect to the last two components, monotone non-increasing with respect to the second component and satisfies: 
	\begin{itemize}
\item $\dis F(\cdot,0,0)\in L^{\tilde{p}}(\Omega)$ for some $\tilde{p}>N/2s$ and 
\begin{equation}\label{as1}
\begin{array}{llll}
	\dis \frac{\partial F}{\partial t}(x,t,\xi)\leq 0\;\; \text{for a.e.}\; x\in  \Omega \;\;\text{and for all}\;\; (t,\xi)\in \R^2.
	\end{array}
\end{equation}
\item $\forall M>0$, $\exists C_{F,M}>0$ such that	
	\begin{equation}\label{as2}
	\begin{array}{llll}
	 \dis \sum_{1\leq i+j\leq 2}\left|\frac{\partial^{i+j} F}{\partial t^i\partial \xi^j}(x,t,u)\right|\leq C_{F,M}
	\end{array}
	\end{equation}
for a.e. $x\in \Omega$ and for all $(t,\xi)\in \R^2$, with $|t|\leq M$ and $ \xi\in [\alpha,\beta]$.
\item 	$\forall \varepsilon>0$ and  $M>0$, $\exists \rho>0$ such that, if $|t_1|, |t_2|\leq M$, $|t_1-t_2|<\rho$, $\xi_1,\xi_2\in [\alpha,\beta]$ with $|\xi_1-\xi_2|<\rho$, then
	\begin{equation}\label{as3}
	\begin{array}{llll}
	\dis \sum_{i+j= 2}\left|\frac{\partial^{i+j} F}{\partial t^i\partial \xi^j}(x,t_2,\xi_2)-\frac{\partial^{i+j} F}{\partial t^i\partial \xi^j}(x,t_1,\xi_1)\right|\leq \varepsilon\;\; \text{for a.a.}\; x\in  \Omega.
	\end{array}
	\end{equation}
	\item 	$\forall (x,t)\in \Omega\times \R$, the mapping $F(x,t,\cdot)$ satisfies the following convexity condition: For any $\xi,v\in [\alpha,\beta]$ and $\lambda\in [0,1]$, there exists $w\in [\alpha,\beta]$ such that 
	\begin{equation}\label{as4}
	\begin{array}{llll}
	\dis F(x,t,w)=\lambda F(x,t,\xi)+(1-\lambda)F(x,t,v).
	\end{array}
	\end{equation}
\end{itemize}
\end{assumption}

We conclude this section by giving the assumptions on the nonlinearity $L$ involved in the functional $J$ given in \eqref{opt}.
\begin{assumption}\label{ass2}
	The measurable function $L:\Omega\times\mathbb{R}\times \mathbb R\rightarrow \mathbb{R}$ is assumed to be a Carath\'eodory function of class $\mathcal{C}^2$ with respect to the last two components and satisfies the following:
	\begin{itemize}
		\item $\dis \frac{\partial L}{\partial t}(\cdot,0,0),\frac{\partial L}{\partial \xi}(\cdot,0,0)\in L^\infty(\Omega)$.
		\item There exist a function $\phi_0\in L^1(\Omega)$ and nonnegative constants $c_0$, $c_1$, $\beta_0$, $\beta_1$ such that
		\begin{equation}\label{as5}
		\begin{array}{llll}
		\dis\phi_0(x)-c_0|x|^{\beta_0}-c_1|x|^{\beta_1}\leq L(x,t,\xi)\;\; \text{for a.e.}\; x\in  \Omega \;\;\text{and for all}\;\; (t,\xi)\in \R^2.
		\end{array}
		\end{equation}
		\item $\forall M>0$, $\exists C_{L,M}>0$ such that	
		\begin{equation}\label{as6}
		\begin{array}{llll}
		\dis |L(x,t,\xi)|\leq C_{L,M}
		\end{array}
		\end{equation}
		and 	
		\begin{equation}\label{as7}
		\begin{array}{llll}
		\dis \sum_{1\leq i+j\leq 2}\left|\frac{\partial^{i+j} L}{\partial t^i\partial \xi^j}(x,t,\xi)\right|\leq C_{L,M}
		\end{array}
		\end{equation}
		for a.e. $x\in \Omega$ and for all $(t,\xi)\in \R^2$, with $|t|\leq M$  and $ \xi\in [\alpha,\beta]$.
		\item 	$\forall \varepsilon>0$ and  $M>0$, $\exists \rho>0$ such that, if $|t_1|, |t_2|\leq M$, $|t_1-t_2|<\rho$, $\xi_1,\xi_2\in [\alpha,\beta]$ with $|\xi_1-\xi_2|<\rho$, then
		\begin{equation}\label{as8}
		\begin{array}{llll}
		\dis \sum_{i+j= 2}\left|\frac{\partial^{i+j} L}{\partial t^i\partial \xi^j}(x,t_2,\xi_2)-\frac{\partial^{i+j} L}{\partial t^i\partial \xi^j}(x,t_1,\xi_1)\right|\leq \varepsilon\;\; \text{for a.a.}\; x\in  \Omega.
		\end{array}
		\end{equation}
		\item 	$\forall (x,t)\in \Omega\times \R$, the mapping $F(x,t,\cdot)$ satisfies the following convexity condition: For any $\xi,v\in [\alpha,\beta]$ and $\lambda\in [0,1]$, there exists $w\in [\alpha,\beta]$ such that 
		\begin{equation}\label{as9}
		\begin{array}{llll}
		\dis L(x,t,w)\leq\lambda L(x,t,\xi)+(1-\lambda)L(x,t,v).
		\end{array}
		\end{equation}
	\end{itemize}
\end{assumption}

	
	\section{Existence and regularity of solutions to the state equation}\label{existe}

	From now on, we simplify the notations by setting
	 \begin{equation}
		\mathbb{V}:=  \mathbb{H}^s(\Omega) \hbox{ and }\mathbb V^\star:= \mathbb{H}^{-s}(\Omega),
	\end{equation}
	and we let $ \left\langle\cdot,\cdot\right\rangle_{\V^\star,\V}$ denote the duality mapping between $\V^\star$ and $\V$.
	
	\begin{definition}\label{weaksolutionint}
		Let $u\in L^\infty(\Omega)$. We  say that
		$y\in \V$ is a weak solution of \eqref{model}, if the  equality
		\begin{equation}\label{Eq-Def31int}
			\begin{array}{ccccc}
				\dis  \mathcal{F}(y,\phi):=\int_{\Omega}(-\Delta_D)^{\frac{s}{2}}y (-\Delta_D)^{\frac{s}{2}}\phi\,\dx&=\dis\int_{\Omega} F(x,y,u) \phi \,\dx
			\end{array}
		\end{equation}
		holds, for every $\phi \in \V$.
	\end{definition}

We have the following existence and regularity result.

\begin{theorem}\label{theoexistence}
Let $s\in (0,1)$ and $\tilde{p}>N/2s$.  Suppose that  \eqref{as1} and \eqref{as2} hold. Then,  for every $u\in L^\infty(\Omega)$,  Equation \eqref{model} has a unique weak solution $y\in \V\cap L^\infty(\Omega)$ and there is a constant $C:=C(N,s,\Omega, \tilde{p})>0$ such that 
	\begin{equation}\label{est0}
\|y\|_{\V}+\|y\|_{L^\infty(\Omega)}\leq C\left( \|F(\cdot,0,0)\|_{L^{\tilde{p}}(\Omega)}+\|u\|_{L^\infty(\Omega)}\right).
	\end{equation}
Moreover, there is a  constant $C:=C(N,s,\Omega, {\tilde{p}},\alpha,\beta)>0$  such that 
\begin{equation}\label{est1}
\|y\|_{\V}+\|y\|_{L^\infty(\Omega)}\leq C\left(\|F(\cdot,0,0)\|_{L^{\tilde{p}}(\Omega)}+1\right), \;\;\forall u\in \U.
\end{equation}
\end{theorem}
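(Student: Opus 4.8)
The two features of \eqref{model} to overcome are that the nonlinearity $F$ is controlled only \emph{locally} in the state variable through \eqref{as2}, so the solution map of \eqref{model} is not obviously well defined on all of $\V$, and that the $L^\infty$-bound on the state must be read off from the $L^{\tilde p}$-norm of the datum with the sharp threshold $\tilde p>N/2s$. I would therefore proceed in three steps: (1) truncate $F$ in the state variable and solve the truncated equation by a compactness argument; (2) prove an $L^\infty$-bound on the truncated solutions that is uniform in the truncation level, which simultaneously yields the $L^\infty$-part of \eqref{est0} and shows that at a large truncation level the truncated solution already solves \eqref{model}; (3) obtain the $\V$-estimate and uniqueness by testing the equation with the solution, resp. with the difference of two solutions, using the monotonicity \eqref{as1}. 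Step (2) is the crux: existence of the truncated solution and the energy estimates are comparatively soft once \eqref{as1} is available, whereas the $L^\infty$-bound with the asserted linear dependence on the data uses the full strength of Theorem \ref{theo-23}(c) together with the Dirichlet-form structure of $(-\Delta_D)^s$ (equivalently, a De Giorgi--Stampacchia iteration that closes exactly because $\tilde p>N/2s$).

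\emph{Step 1 (truncated problem).} For $k\in\N$ set $T_k(t):=\max\{-k,\min\{t,k\}\}$ and $F_k(x,t,\xi):=F(x,T_k(t),\xi)$. Since $T_k$ is non-decreasing, $F_k(x,\cdot,\xi)$ is still non-increasing; and writing $F_k(x,t,\xi)=F(x,0,\xi)+\int_0^{T_k(t)}\partial_t F(x,\sigma,\xi)\,d\sigma$ and using \eqref{as2}, one gets $|F_k(x,t,\xi)|\le g_k(x):=|F(\cdot,0,0)|+C_k$, uniformly in $t$, with $g_k\in L^{\tilde p}(\Omega)\hookrightarrow\V^\star$ ($\Omega$ bounded, $\tilde p>N/2s$). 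I would then consider $\mathcal T_k:L^2(\Omega)\to L^2(\Omega)$, $z\mapsto y$, where $y$ is the unique weak solution of $(-\Delta_D)^sy=F_k(\cdot,z,u)$ given by Theorem \ref{theo-23} with $b\equiv0$. Its range lies in a fixed bounded subset of $\V$, which is relatively compact in $L^2(\Omega)$ since $-\Delta_D$ has compact resolvent; and $\mathcal T_k$ is continuous, because $z_n\to z$ in $L^2(\Omega)$ forces (along subsequences) $F_k(\cdot,z_n,u)\to F_k(\cdot,z,u)$ a.e., hence in $L^{\tilde p}(\Omega)\hookrightarrow\V^\star$ by dominated convergence (dominating function $2g_k$), and the linear solution operator $\V^\star\to\V$ is continuous. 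Schauder's fixed point theorem, applied on a ball $\{\|z\|_{L^2(\Omega)}\le R\}$ with $R$ large, then produces $y_k\in\V$ solving $(-\Delta_D)^sy_k=F_k(\cdot,y_k,u)$. (Alternatively one may invoke the Browder--Minty theorem for the bounded, monotone, coercive, hemicontinuous map $y\mapsto(-\Delta_D)^sy-F_k(\cdot,y,u):\V\to\V^\star$.)

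\emph{Step 2 (uniform $L^\infty$-bound; removal of the truncation).} Let $h:=|F(\cdot,0,u)|\in L^{\tilde p}(\Omega)$ and let $\psi\in\V$ solve $(-\Delta_D)^s\psi=h$; by Theorem \ref{theo-23}(c), $\psi\in L^\infty(\Omega)$ with $\|\psi\|_{L^\infty(\Omega)}\le C\|h\|_{L^{\tilde p}(\Omega)}$, and $\psi\ge0$ since $(-\Delta_D)^{-s}$ is positivity preserving. I would prove $-\psi\le y_k\le\psi$ a.e. For the upper bound, test $(-\Delta_D)^s(y_k-\psi)=F(x,T_k(y_k),u)-h$ with $(y_k-\psi)^+\in\V$: on $\{y_k>\psi\}\subset\{y_k>0\}$ one has $T_k(y_k)\ge0$, hence $F(x,T_k(y_k),u)\le F(x,0,u)\le h$, so the right-hand side paired with $(y_k-\psi)^+$ is $\le0$; on the left-hand side, the bilinear form $\mathcal F$ of \eqref{Eq-Def31int}, being a symmetric Dirichlet form (as $(-\Delta_D)^s$ generates a sub-Markovian semigroup by subordination of $e^{t\Delta_D}$), satisfies $\mathcal F(w^+,w^-)\le0$ and therefore $\mathcal F(w,w^+)\ge\|w^+\|_\V^2$. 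This forces $\|(y_k-\psi)^+\|_\V^2\le0$, i.e. $y_k\le\psi$; testing $(-\Delta_D)^s(y_k+\psi)=F(x,T_k(y_k),u)+h$ with $-(y_k+\psi)^-$ gives $y_k\ge-\psi$. Hence $\|y_k\|_{L^\infty(\Omega)}\le C\|h\|_{L^{\tilde p}(\Omega)}\le C\big(\|F(\cdot,0,0)\|_{L^{\tilde p}(\Omega)}+\|u\|_{L^\infty(\Omega)}\big)=:C_0$, independently of $k$. Choosing $k>C_0$ makes $T_k(y_k)=y_k$, so $y:=y_k\in\V\cap L^\infty(\Omega)$ is a weak solution of \eqref{model} satisfying the $L^\infty$-part of \eqref{est0}. (Alternatively, the $L^\infty$-bound can be obtained by testing with $(y-k)^+$ and $(y+k)^-$ and running a De Giorgi--Stampacchia iteration, which closes precisely because $\tilde p>N/2s$.)

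\emph{Step 3 ($\V$-estimate, uniqueness, \eqref{est1}).} Testing \eqref{Eq-Def31int} for $y$ with $\phi=y$ and writing $\int_\Omega F(x,y,u)y\,\dx=\int_\Omega(F(x,y,u)-F(x,0,u))y\,\dx+\int_\Omega F(x,0,u)y\,\dx$, the first integral is $\le0$ pointwise by \eqref{as1}, so $\|y\|_\V^2\le\|F(\cdot,0,u)\|_{\V^\star}\|y\|_\V$ and hence $\|y\|_\V\le C\|F(\cdot,0,u)\|_{L^{\tilde p}(\Omega)}\le C\big(\|F(\cdot,0,0)\|_{L^{\tilde p}(\Omega)}+\|u\|_{L^\infty(\Omega)}\big)$; together with Step 2 this gives \eqref{est0}. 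For uniqueness, if $y_1,y_2\in\V\cap L^\infty(\Omega)$ are weak solutions, then subtracting the equations and testing with $y_1-y_2$ yields $\|y_1-y_2\|_\V^2=\int_\Omega(F(x,y_1,u)-F(x,y_2,u))(y_1-y_2)\,\dx\le0$ by \eqref{as1}, so $y_1=y_2$. Finally, for $u\in\U$ one has $\|u\|_{L^\infty(\Omega)}\le\max\{|\alpha|,|\beta|\}$, so \eqref{est1} follows from \eqref{est0} with a constant now also depending on $\alpha$ and $\beta$.
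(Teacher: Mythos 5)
Your proof is correct, and it is genuinely more self-contained than the paper's: the paper proves only the preliminary estimate $\|F(\cdot,0,u)\|_{L^{\tilde p}(\Omega)}\le C\bigl(\|F(\cdot,0,0)\|_{L^{\tilde p}(\Omega)}+\|u\|_{L^\infty(\Omega)}\bigr)$ via the Mean Value Theorem and then delegates existence, uniqueness and the bound $\|y\|_{\V}+\|y\|_{L^\infty(\Omega)}\le C\|F(\cdot,0,u)\|_{L^{\tilde p}(\Omega)}$ to \cite{AntilWarma} and \cite{otarola2022}. You reprove those delegated facts from scratch: truncation $F_k(x,t,\xi)=F(x,T_k(t),\xi)$ to make the nonlinearity globally bounded without destroying monotonicity; existence for the truncated problem via Schauder (or equivalently Browder--Minty, since $y\mapsto(-\Delta_D)^sy-F_k(\cdot,y,u)$ is bounded, monotone, coercive and hemicontinuous on $\V$); a two-sided comparison $-\psi\le y_k\le\psi$ against the linear supersolution $\psi=(-\Delta_D)^{-s}|F(\cdot,0,u)|$, using Theorem~\ref{theo-23}(c) for $\|\psi\|_{L^\infty(\Omega)}$ and positivity preservation of $(-\Delta_D)^{-s}$, which makes the $L^\infty$-bound uniform in $k$ and removes the truncation; and finally the $\V$-estimate and uniqueness from \eqref{as1}. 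The one structural input this route requires and which you correctly isolate is that $\mathcal F(w^+,w^-)\le 0$, so that $\mathcal F(w,w^+)\ge\|w^+\|_{\V}^2$; your justification via the Markovianity of the subordinated semigroup is the right one, and can equivalently be seen from $\mathcal F(u,v)=\lim_{\alpha\to\infty}\alpha\bigl(u-\alpha(\alpha+(-\Delta_D)^s)^{-1}u,v\bigr)_{L^2(\Omega)}$ together with $(w^+,w^-)_{L^2(\Omega)}=0$ and positivity preservation of the resolvent. What your route buys is a fully self-contained proof in which the $L^\infty$-bound comes from a clean barrier comparison rather than an iteration, and in which the constant's dependence in \eqref{est0}--\eqref{est1} is visibly traced; what the paper's route buys is brevity by outsourcing the analytic core to the cited references, which in turn run essentially the same machinery.
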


\begin{proof}
	Let $u\in L^\infty(\Omega)$. We claim that $F(\cdot,0,u)\in L^{\tilde{p}}(\Omega)$. Using the Mean Value Theorem, we write
	\begin{eqnarray*}
		F(x,0,u(x))=F(x,0,0)+\frac{\partial F}{\partial u}(x,0,\epsilon(x)u(x))u(x),
	\end{eqnarray*} 
	where $\epsilon:\Omega\to [0,1]$ is a measurable function. Therefore, using \eqref{as2} with $M=1$, we deduce that 
	\begin{eqnarray*}
		|F(x,0,u(x))|&\leq& |F(x,0,0)|+\left|\frac{\partial F}{\partial u}(x,0,\epsilon(x)u(x))\right||u(x)|\\
		&\leq& |F(x,0,0)|+C_{F,1}\|u\|_{L^\infty(\Omega)}.
	\end{eqnarray*}
	Hence,
	\begin{equation}\label{est}
	\|F(\cdot,0,u)\|_{L^{\tilde{p}}(\Omega)} \leq C(N,s,\Omega, {\tilde{p}})\left( \|F(\cdot,0,0)\|_{L^{\tilde{p}}(\Omega)}+\|u\|_{L^\infty(\Omega)}\right),
	\end{equation}
and so the claim is proved.  Using similar arguments as in \cite[Section 3]{AntilWarma} or \cite[Theorem 3.1]{otarola2022},  we can deduce that  \eqref{model} has a unique weak solution $y\in \V\cap L^\infty(\Omega)$.  In addition, the following estimate holds:
\begin{equation}\label{est3}
\|y\|_{\V}+\|y\|_{L^\infty(\Omega)}\leq C\|F(\cdot,0,u)\|_{L^{\tilde{p}}(\Omega)}.
\end{equation}
Combining \eqref{est}-\eqref{est3} we get \eqref{est0}.	If $u\in \mathcal{U}$, we can also deduce \eqref{est1}. The proof is finished.
\end{proof}
Next, we have the following regularity result of weak solutions to \eqref{model}.

\begin{theorem}\label{theoregularity}
	Let $s\in (0,1)$ and $u\in L^\infty(\Omega)$. Assume that Assumption \ref{ass1} is satisfied. Then,  the following assertions hold:
	\begin{enumerate}
		\item Let $\Omega$  be of class $\mathcal{C}^{1}$. If $\frac{1}{2}<s<1$ and $\frac{N}{2s}<{\tilde{p}}<\frac{N}{2s-1}$, or if $0<s\leq \frac{1}{2}$ and ${\tilde{p}}<\infty$, then the weak solution $y$ of \eqref{model} belongs to $\mathcal{C}^{0,\sigma}(\bar{\Omega})$ with $\sigma=2s-\frac{N}{{\tilde{p}}}$.
		\item Let $s>\frac{1}{2}$ and ${\tilde{p}}>\frac{N}{2s-1}$. If $\Omega$ is of class $\mathcal{C}^{1,\sigma}$ for $0<\sigma:=2s-\frac{N}{{\tilde{p}}}-1<1$, then the weak solution $y$ of \eqref{model} belongs to $\mathcal{C}^{0,\sigma}(\bar{\Omega})$.
		\item  In both cases $(a)$ and $(b)$, there is a constant $C:=C(N,s,\Omega)>0$ such that
		\begin{equation}\label{est2}
		\|y\|_{\mathcal{C}^{0,\sigma}(\bar\Omega)}\leq C\left(\|y\|_{\V}+\|F(\cdot,y(\cdot),u(\cdot))\|_{L^{\tilde{p}}(\Omega)}\right).
		\end{equation} 
	\end{enumerate}
\end{theorem}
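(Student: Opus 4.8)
The plan is to establish Theorem \ref{theoregularity} by bootstrapping the regularity of the source term $g(x):=F(x,y(x),u(x))$ and then invoking the linear regularity theory, essentially Theorem \ref{theo-23}(c) together with known Schauder-type estimates for the spectral fractional Laplacian on smooth domains. First I would fix $u\in L^\infty(\Omega)$, let $y\in\V\cap L^\infty(\Omega)$ be the weak solution provided by Theorem \ref{theoexistence}, and set $M:=\|y\|_{L^\infty(\Omega)}$. Since $y$ and $u$ are both essentially bounded (with $u(x)\in[\alpha,\beta]$ a.e.), the mean value theorem applied twice in the last two arguments of $F$, combined with the local bound \eqref{as2}, gives the pointwise estimate
\begin{equation*}
|F(x,y(x),u(x))|\le |F(x,0,0)|+C_{F,M}\big(|y(x)|+|u(x)|\big),
\end{equation*}
so that $g\in L^{\tilde p}(\Omega)$ with
\begin{equation*}
\|g\|_{L^{\tilde p}(\Omega)}\le C\big(\|F(\cdot,0,0)\|_{L^{\tilde p}(\Omega)}+\|y\|_{\V}+\|u\|_{L^\infty(\Omega)}\big),
\end{equation*}
using $\tilde p>N/2s$ and the embedding $\V\hookrightarrow L^{\tilde p}(\Omega)$ (valid for $\tilde p$ below the critical Sobolev exponent; in the borderline or supercritical ranges one instead uses $y\in L^\infty(\Omega)$ directly). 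Thus $y$ solves the linear equation $(-\Delta_D)^s y=g$ with $g\in L^{\tilde p}(\Omega)$, $\tilde p>N/2s$, and it suffices to prove the Hölder regularity for this linear problem.

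Next I would record the linear Hölder regularity statement, which is the core of the argument. For the spectral fractional Laplacian on a domain of class $\mathcal{C}^1$ one has: if $g\in L^{\tilde p}(\Omega)$ with $2s-N/\tilde p\in(0,1)$ and $2s-N/\tilde p$ not exceeding the boundary regularity threshold, then the solution $\varphi=(-\Delta_D)^{-s}g$ belongs to $\mathcal{C}^{0,\sigma}(\bar\Omega)$ with $\sigma=2s-N/\tilde p$, together with the estimate $\|\varphi\|_{\mathcal{C}^{0,\sigma}(\bar\Omega)}\le C(\|\varphi\|_{\V}+\|g\|_{L^{\tilde p}(\Omega)})$. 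In case (a), when $\tfrac12<s<1$ the restriction $\tilde p<N/(2s-1)$ forces $\sigma=2s-N/\tilde p<1$, and the $\mathcal{C}^1$ regularity of $\partial\Omega$ suffices to carry the Hölder estimate up to the boundary (recall $y=0$ on $\partial\Omega$ is built into $\V$); when $0<s\le\tfrac12$ one has $\sigma=2s-N/\tilde p<2s\le 1$ automatically for any finite $\tilde p$. In case (b), when $s>\tfrac12$ and $\tilde p>N/(2s-1)$ the naive exponent $2s-N/\tilde p$ would exceed $1$, so the regularity saturates at $\mathcal{C}^{1,\sigma}$-type estimates for the solution, which in particular yields $y\in\mathcal{C}^{0,\sigma}(\bar\Omega)$ with $\sigma=2s-N/\tilde p-1\in(0,1)$; this is where the stronger $\mathcal{C}^{1,\sigma}$ hypothesis on $\partial\Omega$ is needed, so that the domain geometry does not obstruct the interior regularity from propagating to the boundary. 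In both cases the estimate \eqref{est2} then follows by combining the linear estimate with the bound on $\|g\|_{L^{\tilde p}(\Omega)}$ derived above, after absorbing the $L^{\tilde p}$-norm of $F(\cdot,0,0)$, $y$ and $u$ appropriately (and noting that $\|F(\cdot,y,u)\|_{L^{\tilde p}(\Omega)}$ appears explicitly on the right of \eqref{est2}, so no further manipulation is even required there).

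The main obstacle is the boundary regularity theory for the spectral fractional Dirichlet Laplacian: unlike the interior estimate, which is essentially a consequence of the heat-kernel representation of $(-\Delta_D)^{-s}$ and standard singular-integral bounds, the behaviour of solutions near $\partial\Omega$ is delicate because the spectral operator's Green function degenerates at the boundary, and the precise Hölder exponent one can reach is tied to both $s$ and the smoothness of $\partial\Omega$. This is exactly why the statement splits into the two regimes and why the domain is assumed $\mathcal{C}^1$ in (a) and $\mathcal{C}^{1,\sigma}$ in (b); the relevant estimates are by now classical and can be cited from the literature on the spectral fractional Laplacian (the references \cite{antil2017,AntilWarma,RO-SJ-EX} already invoked for Theorem \ref{theo-23}, and the Caffarelli–Silvestre/Stinga–Torrea extension framework), so the proof reduces to verifying that the exponent arithmetic $\sigma=2s-N/\tilde p$ (resp. $\sigma=2s-N/\tilde p-1$) lands in $(0,1)$ under the stated hypotheses and that $g\in L^{\tilde p}(\Omega)$, both of which are the routine steps carried out above. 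A secondary point to be careful about is the borderline case $N=2s$ appearing implicitly when $0<s\le\tfrac12$: here one uses that $y\in L^\infty(\Omega)\subset L^{\tilde p}(\Omega)$ for every finite $\tilde p$, so the argument goes through without change.
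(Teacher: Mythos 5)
Your proposal follows the same route as the paper's proof: rewrite \eqref{model} as the linear problem $(-\Delta_D)^s y = F(\cdot,y,u)$, show via the Mean Value Theorem and the local bound \eqref{as2} (using $y\in L^\infty(\Omega)$ from Theorem \ref{theoexistence}) that the source term lies in $L^{\tilde p}(\Omega)$, and then invoke the known H\"older-regularity theory for the spectral fractional Dirichlet Laplacian on $\mathcal{C}^1$ (resp.\ $\mathcal{C}^{1,\sigma}$) domains, absorbing $\|y\|_{L^2}$ into $\|y\|_{\V}$ for the estimate. The only cosmetic differences are your particular MVT decomposition (you could simply use $y\in L^\infty$ directly rather than the Sobolev embedding $\V\hookrightarrow L^{\tilde p}$, which can fail for large $\tilde p$ as you yourself note) and your additional running commentary on the boundary regularity and exponent arithmetic.
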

%

\begin{proof}
We can write the system \eqref{model} as 
\begin{equation}\label{eq}
(-\Delta_D)^sy=F(x,y,u)\;\;\;\text{in}\;\; \Omega.
\end{equation}
Let $y\in \V\cap L^\infty(\Omega)$ be the weak solution of \eqref{model} equivalently of \eqref{eq}. According to \cite{antil2017,grubb2016}, it suffixes to show that $F(\cdot,y(\cdot),u(\cdot))\in L^{\tilde{p}}(\Omega)$. Indeed, using \eqref{as2} and \eqref{est0}, we have from the Mean Value Theorem that there is a measurable function $\epsilon:\Omega\to [0,1]$ such that
\begin{align}\label{ss}
\dis |F(x,y(x),u(x))|\leq& \dis|F(x,0,u(x))|+\left|\frac{\partial F}{\partial y}(x,\epsilon(x)y(x),u(x))\right||y(x)|\notag\\
\leq& \dis |F(x,0,u(x))|+\left|\frac{\partial F}{\partial y}(x,\epsilon(x)y(x),u(x))\right|\|y\|_{L^\infty(\Omega)}\notag\\
\leq& \dis |F(x,0,u(x))|+ C_{F,M}M,
\end{align}
where $M=C(N,s,\Omega, p)\left( \|F(\cdot,0,0)\|_{L^{\tilde{p}}(\Omega)}+\|u\|_{L^\infty(\Omega)}\right)>0$.
Hence, using \eqref{est} and \eqref{ss}, we deduce that
\begin{equation}\label{imp}
\begin{array}{llll}
\dis \|F(\cdot,y(\cdot),u(\cdot))\|_{L^{\tilde{p}}(\Omega)}\leq 2 C_{F,M}M.
\end{array}
\end{equation}
Therefore, $F(\cdot,y(\cdot),u(\cdot))\in L^{\tilde{p}}(\Omega)$. Applying the results of \cite{antil2017,caffarelli2016,grubb2016}, we can deduce that Parts (a) and (b) hold.  It follows from \cite{caffarelli2016}  that
 \begin{equation}\label{est4}
\|y\|_{\mathcal{C}^{0,\sigma}(\bar\Omega)}\leq C\left(\|y\|_{L^2(\Omega)}+\|y\|_{\V}+\|F(\cdot,y(\cdot),u(\cdot))\|_{L^{\tilde{p}}(\Omega)}\right).
\end{equation} 
Thanks to \eqref{equality0}, we have the continuous embedding $\V\hookrightarrow L^2(\Omega)$. Hence,
\begin{equation}\label{est5}
\|y\|_{L^2(\Omega)}\leq C(N,s,\Omega)\|y\|_{\V}.
\end{equation}
Combining \eqref{est4}-\eqref{est5} leads us to \eqref{est2}. This completes the proof.
\end{proof}

\begin{remark}\label{estc0}
Let $s>\frac{1}{2}$ and ${\tilde{p}}>\frac{N}{2s-1}$. If $\Omega$ is of class $\mathcal{C}^{1,\sigma}$ for $0<\sigma:=2s-\frac{N}{{\tilde{p}}}-1<1$, then from \eqref{est0}, \eqref{est1} and \eqref{est2}, we can deduce the following estimates:
\begin{equation}\label{estc00}
\|y\|_{\V}+\|y\|_{\mathcal{C}^{0,\sigma}(\bar{\Omega})}\leq C\left(\|F(\cdot,0,0)\|_{L^{\tilde{p}}(\Omega)}+\|u\|_{L^\infty(\Omega)}\right) \;\;\forall u\in L^\infty(\Omega)
\end{equation}
with $C:=C(N,s,\Omega,\tilde{p})>0$ and
\begin{equation}\label{estc01}
\|y\|_{\V}+\|y\|_{\mathcal{C}^{0,\sigma}(\bar{\Omega})}\leq C\left(\|F(\cdot,0,0)\|_{L^{\tilde{p}}(\Omega)}+1\right) \;\;\forall u\in \U
\end{equation}
where $C:=C(N,s,\Omega,\tilde{p},\alpha,\beta)>0.$
\end{remark}

	\section{Existence of optimal solutions} \label{control}
	In this section, we are concerned with the existence of optimal solutions to the control problem \eqref{opt}-\eqref{model}.  The theorem on measurable selections of a measurable set-valued (see e. g. \cite{aubin2009}) will play a crucial role. 
	\begin{theorem}\label{existcontrol1}
Assume that $s>\frac{1}{2}$, ${\tilde{p}}>\frac{N}{2s-1}$, $\Omega$ is of class $\mathcal{C}^{1,\sigma}$ for $0<\sigma:=2s-\frac{N}{{\tilde{p}}}-1<1$ and that Assumptions \ref{ass1} and \ref{ass2} are satisfied. Then, there exists at least one solution $(\bar u,\bar y)\in \mathcal{U}\times(\V\cap
 \mathcal{C}(\bar{\Omega}))$ of the minimization problem \eqref{opt}-\eqref{model}.
	\end{theorem}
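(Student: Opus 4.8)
The plan is to use the direct method of the calculus of variations together with a measurable selection theorem, following the strategy of Casas--Tr\"oltzsch \cite{casas2020o} (see also \cite{casas1995}). First I would record, from Remark \ref{estc0}, that there is $M_0>0$ with $\|y_u\|_{\V}+\|y_u\|_{\mathcal C^{0,\sigma}(\bar\Om)}\le M_0$ for every $u\in\U$, where $y_u\in\V\cap\mathcal C(\bar\Om)$ denotes the unique weak solution of \eqref{model} given by Theorems \ref{theoexistence} and \ref{theoregularity}. Then \eqref{as6} (applied with $M=M_0$) shows that $J$ is finite on $\U$, while \eqref{as5} and the boundedness of $\Om$ show that $J$ is bounded below on $\U$; hence $j:=\inf_{u\in\U}J(u)\in\R$. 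Take a minimizing sequence $(u_n)\subset\U$ with $J(u_n)\to j$ and set $y_n:=y_{u_n}$. By \eqref{imp} the sequence $\big(F(\cdot,y_n,u_n)\big)$ is bounded in $L^{\tilde p}(\Om)$, and by \eqref{as6} the sequence $\big(L(\cdot,y_n,u_n)\big)$ is bounded in $L^\infty(\Om)$. Since the embedding $\mathcal C^{0,\sigma}(\bar\Om)\hookrightarrow\mathcal C(\bar\Om)$ is compact and $\V$ is reflexive, after passing to a subsequence (not relabeled) I may assume $y_n\to\bar y$ in $\mathcal C(\bar\Om)$, $y_n\rightharpoonup\bar y$ in $\V$, $F(\cdot,y_n,u_n)\rightharpoonup g$ in $L^{\tilde p}(\Om)$ and $L(\cdot,y_n,u_n)\overset{*}{\rightharpoonup}\ell$ in $L^\infty(\Om)$; in particular $\int_\Om\ell\,\dx=\lim_n J(u_n)=j$. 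Passing to the limit in the weak formulation \eqref{Eq-Def31int} — the bilinear form is continuous on $\V$, and $\V\hookrightarrow L^{\tilde p'}(\Om)$ since $\tilde p'<N/(N-2s)$ and $\V\hookrightarrow L^{2N/(N-2s)}(\Om)$ — shows that $\bar y$ is the weak solution of $(-\Delta_D)^s\bar y=g$ in $\Om$.

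The crux is to realize $g$ as $F(\cdot,\bar y,\bar u)$ for a suitable $\bar u\in\U$. For a.e. $x\in\Om$ set
\[
Q(x):=\big\{(a,b)\in\R^2:\ \exists\, v\in[\alpha,\beta]\ \text{with}\ a\ge L(x,\bar y(x),v)\ \text{and}\ b=F(x,\bar y(x),v)\big\}.
\]
By the convexity conditions \eqref{as4} and \eqref{as9}, $Q(x)$ is convex, and since $[\alpha,\beta]$ is compact and $F(x,\bar y(x),\cdot)$, $L(x,\bar y(x),\cdot)$ are continuous, $Q(x)$ is closed. Writing $\Phi_n(x):=\big(L(x,y_n(x),u_n(x)),\,F(x,y_n(x),u_n(x))\big)$, the Mean Value Theorem and the derivative bounds \eqref{as2} and \eqref{as7} give $\operatorname{dist}\big(\Phi_n(x),Q(x)\big)\le C\,\|y_n-\bar y\|_{\mathcal C(\bar\Om)}=:\varepsilon_n\to0$ for a.e. $x$, because the point obtained from $\Phi_n(x)$ by replacing $y_n(x)$ with $\bar y(x)$ belongs to $Q(x)$. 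The sequence $(\Phi_n)$ is bounded and equi-integrable in $L^1(\Om;\R^2)$ (from the $L^\infty$ bound on its first component and the $L^{\tilde p}$, $\tilde p>1$, bound on its second), hence $\Phi_n\rightharpoonup(\ell,g)$ weakly in $L^1(\Om;\R^2)$. By Mazur's lemma there are convex combinations $\widetilde\Phi_n=\sum_{k\ge n}\lambda_k^n\Phi_k$ with $\widetilde\Phi_n\to(\ell,g)$ in $L^1(\Om;\R^2)$, hence a.e. along a subsequence. Since $Q(x)$ is convex, $\operatorname{dist}\big(\widetilde\Phi_n(x),Q(x)\big)\le C\sum_{k\ge n}\lambda_k^n\varepsilon_k\le C\sup_{k\ge n}\varepsilon_k\to0$, so closedness of $Q(x)$ forces $(\ell(x),g(x))\in Q(x)$ for a.e. $x\in\Om$. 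Therefore the multifunction $\Sigma(x):=\{v\in[\alpha,\beta]:\ F(x,\bar y(x),v)=g(x)\ \text{and}\ L(x,\bar y(x),v)\le\ell(x)\}$ has nonempty closed values and is measurable (its defining functions are Carath\'eodory in $(x,v)$), so the measurable selection theorem \cite{aubin2009} provides a measurable $\bar u:\Om\to[\alpha,\beta]$, i.e. $\bar u\in\U$, with $F(\cdot,\bar y,\bar u)=g$ and $L(\cdot,\bar y,\bar u)\le\ell$ a.e. in $\Om$.

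With this $\bar u$, $\bar y$ is a weak solution of \eqref{model}, hence its unique weak solution by Theorem \ref{theoexistence}, and $\bar y\in\V\cap\mathcal C(\bar\Om)$ by Theorem \ref{theoregularity}(b). Moreover
\[
J(\bar u)=\int_\Om L\big(x,\bar y(x),\bar u(x)\big)\,\dx\le\int_\Om\ell\,\dx=j\le J(\bar u),
\]
so $J(\bar u)=j$ and $(\bar u,\bar y)$ solves \eqref{opt}--\eqref{model}. I expect the main obstacle to be precisely the identification step in the second paragraph: passing to the limit in the nonlinear quantities $F(\cdot,y_n,u_n)$ and $L(\cdot,y_n,u_n)$ when the relevant ``constraint set'' $Q(x)$ is built from the limit state $\bar y$ rather than from $y_n$. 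This is handled by the compact H\"older embedding, which yields uniform convergence $y_n\to\bar y$ and thus allows $y_n$ to be replaced by $\bar y$ inside $F$ and $L$ at the price of an error $\varepsilon_n\to0$ that Mazur's convex combinations absorb, while the convexity conditions \eqref{as4} and \eqref{as9} are exactly what keep the weak limit $(\ell,g)$ inside $Q(x)$.
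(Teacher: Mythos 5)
Your proof is correct and follows essentially the same route as the paper: a minimizing sequence, uniform bounds from Remark \ref{estc0}, the compact embedding $\mathcal C^{0,\sigma}(\bar\Omega)\hookrightarrow\mathcal C(\bar\Omega)$ to get uniform convergence of states, weak(-$\star$) limits $\ell,g$ of $L(\cdot,y_n,u_n)$ and $F(\cdot,y_n,u_n)$, a set-valued map $Q(x)$ built from the limit state $\bar y$, and the measurable selection theorem of \cite{aubin2009} to recover $\bar u$. The one place where you go beyond the paper is the identification step $(\ell(x),g(x))\in Q(x)$ a.e.: the paper delegates this to \cite{kien2022}, whereas you give a self-contained argument via $\operatorname{dist}(\Phi_n(x),Q(x))\le C\|y_n-\bar y\|_{\mathcal C(\bar\Omega)}\to 0$ combined with Mazur's lemma and the closed-convex structure of $Q(x)$; that is a clean and complete way to fill the gap, and it relies on exactly the same ingredients (uniform convergence of $y_n$, the derivative bounds \eqref{as2} and \eqref{as7}, and the convexity conditions \eqref{as4}, \eqref{as9}) that the paper invokes.
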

	
	\begin{proof} We first note that, from \eqref{as5} and \eqref{as6}, $\dis j:=\inf_{v\in\U}J(v)$ satisfies $\dis-\infty< j<+\infty$.
		Let $(v_k)_{k\geq1}\subset  \mathcal{U}$ be a minimizing sequence  such that
		$$\lim_{k\to \infty}J(v_k)= j. $$
	Since $\mathcal{U}$ is bounded in $L^\infty(\Omega)$, we have that the sequence $(v_k)_{k\geq1}$ is bounded.
		Since $y_{k}:=y(v_k)$ is the state associated to the control $v_k$,  it follows from \eqref{Eq-Def31int} that the equality
		\begin{equation}\label{statek}
		\begin{array}{ccccc}
		\dis \int_{\Omega}(-\Delta_D)^{\frac{s}{2}}y_{k} (-\Delta_D)^{\frac{s}{2}}\phi\,\dx&=\dis\int_{\Omega} F(x,y_{k},v_k) \phi \,\dx,
		\end{array}
		\end{equation}
		holds, for every $\phi \in \V$.
		 Moreover, it follows from \eqref{estc01} that there is a  constant $M_1>0$ independent of $k$ such that
		\begin{equation}\label{bound21}
			\|y_{k}\|_{\V}+\|y_{k}\|_{\mathcal{C}^{0,\sigma}(\bar{\Omega})}\leq M_1.
		\end{equation}
		From \eqref{bound21} and thanks to the compact embedding $\mathcal{C}^{0,\sigma}(\bar{\Omega})\hookrightarrow \mathcal{C}(\bar{\Omega})$, there exists  $y\in \V\cap \mathcal{C}^{0,\sigma}(\bar{\Omega}) $ such that (up to a subsequence if necessary), as $k\to \infty$, 
		\begin{equation}\label{c11}
			y_{k}\rightharpoonup  \bar y  \text{ weakly in }  \V
		\end{equation}
		and
		\begin{equation}\label{c21}
				y_{k}\to  \bar y  \text{ strongly in } \mathcal{C}(\bar{\Omega}).
		\end{equation}
We also have that
		\begin{equation}\label{bound22}
		\|\bar y\|_{\V}+\|\bar y\|_{\mathcal{C}(\bar{\Omega})}\leq M_1.
		\end{equation}
From \eqref{as6} and using \eqref{bound21}, there exists $C_{L,M_{1}}>0$ such that
\begin{equation}\label{boundL}
|L(x,y_{k}(x),v_{k}(x))|\leq C_{L,M_{1}}:=M_2 \;\;\text{for a.e.}\;\;x\in \Omega \mbox{ and for all } k\ge 1. 
\end{equation}
Applying \eqref{imp} with ${\tilde{p}}=+\infty$, we can deduce the existence of a constant $M_3>0$ such that
\begin{equation}\label{boundF}
|F(x,y_{k}(x),v_{k}(x))|\leq M_3\;\;\text{for a.e.}\;\;x\in \Omega \mbox{ and for all } k\ge 1. 
\end{equation}
Hence, up to a subsequence,  as $k\to \infty$, $L_{k}:=L(\cdot,y_{k}(\cdot),v_{k}(\cdot))$  converges weakly-$\star$ to $\bar{L}\in L^\infty(\Omega)$ and $F_{k}:=F(\cdot,y_{k}(\cdot),v_{k}(\cdot))$ converges weakly-$\star$ to $\bar{F}\in L^\infty(\Omega)$. This latter convergence implies also the weak convergence to $\bar{F}\in L^2(\Omega)$,  as $k\to\infty$.
Therefore,
\begin{equation}\label{co1}
\lim_{k\to \infty}\int_{\Omega}F_{k}(x)\varphi(x)\,\dx=\int_{\Omega}\bar{F}(x)\varphi(x)\,\dx \;\;\text{for all}\;\;\varphi\in L^2(\Omega)
\end{equation}
and
\begin{equation}\label{co2}
\lim_{k\to \infty}\int_{\Omega}L_{k}(x)\psi(x)\,\dx=\int_{\Omega}\bar{L}(x)\psi(x)\,\dx \;\;\text{for all}\;\;\psi\in L^2(\Omega).
\end{equation}
We define the set-valued map $Q:\Omega \times [-M_1,M_1]\to \R^2$ by
\begin{equation}\label{setQ}
Q(x,y):=\{(z,t)|M_2\geq z\geq L(x, y,u), \; t=F(x,y,u)\;\;\text{for some}\;\; u\in [\alpha,\beta]\},
\end{equation}
where $M_2$ is given in \eqref{boundL}.
Then, $Q(x,t)$ is nonempty, compact and convex in $\R^2$. This follows from \eqref{boundL}, the compactness of $[\alpha,\beta]$, the continuity of $F$, \eqref{as4} and \eqref{as9}. In addition, the set-valued map $Q$ is a Carath\'eodory multifunction (see \cite{kien2022}).
On the other hand we also introduce the set-valued map $\bar{Q}:\Omega \to \R^2$ defined by 
\begin{equation}\label{setQ0}
\bar Q(x):=\{(z,t)|M_2\geq z\geq L(x,\bar y(x), u), \; t=F(x,\bar y(x),u)\;\;\text{for some}\;\; u\in [\alpha,\beta]\},
\end{equation} 
where $M_2$ is given in \eqref{boundL}.
Then,  one can show as in \cite{kien2022} that for a.e.  $x\in \Omega$, $(\bar{L}(x),\bar{F}(x))\in Q(x)$. Therefore, for a.e.  $x\in \Omega$, there exists $u\in [\alpha,\beta]$ such that 
\begin{equation}\label{imp1}
	M_2\geq \bar{L}(x)\geq L(x,\bar y(x),u), \; \bar{F}(x)=F(x,\bar y(x),u).
\end{equation}
 Let $\mathcal{P}(\R)$ be the set of all subsets of $\R$. Define the multi-functions
$$\mathcal{H}_1:\Omega\to \mathcal{P}(\R),\quad \mathcal{H}_1(x)=(-\infty,\bar{L}(x)]\times \{\bar{F}(x)\}$$
and
$$\mathcal{H}_2:\Omega\to \mathcal{P}([\alpha,\beta]),\quad \mathcal{H}_2(x) =\Big\{u\in [\alpha,\beta]: (L(x,\bar y(x),u),F(x,\bar y(x),u))\in \mathcal{H}_1(x)\Big\}.$$
Since $L$ and $F$ are continuous with respect to the last component, we have that for a.e $x\in \Omega$, $\mathcal{H}_2(x)$ has a closed image. On the other hand,  since $\bar{L}$ and $\bar{F}$ are measurable functions, we can deduce that $\mathcal{ H}_2$ is a measurable set-valued map. Using the measurable selection theorem \cite[Theorem 8.1.3]{aubin2009}, we can deduce that there exists a measurable function $\bar u$, such that $\bar u(x)\in \mathcal{H}_2(x)$ for a.e. $x\in \Omega$. Hence, $\bar u(x)\in [\alpha,\beta]$ for a.e. $x\in \Omega$  and 
$$\bar{L}(x)\geq L(x,\bar y(x),\bar u(x)),\; \bar{F}(x)=F(x,\bar y(x),\bar u(x)).$$
So, as $k\to \infty$, 
\begin{equation}\label{conv1}
L(\cdot,y_{k}(\cdot),v_{k}(\cdot))\rightharpoonup  \bar{L}(\cdot)\geq L(\cdot,\bar y(\cdot),\bar u(\cdot))  \text{ weakly in }  L^2(\Omega)
\end{equation}
 and 
 \begin{equation}\label{conv2}
F(\cdot,y_{k}(\cdot),v_{k}(\cdot))\rightharpoonup  F(x,\bar y(\cdot),\bar u(\cdot)) \text{ weakly in }  L^2(\Omega).
 \end{equation}
Taking the limit,  as $k\to \infty$,  in \eqref{statek}, while using \eqref{c11} and \eqref{conv2} we obtain that
	\begin{equation}\label{state}
\begin{array}{ccccc}
\dis \int_{\Omega}(-\Delta_D)^{\frac{s}{2}}\bar y (-\Delta_D)^{\frac{s}{2}}\phi\,\dx&=\dis\int_{\Omega} F(x,\bar y,\bar u) \phi \,\dx\quad\forall \phi\in \V.
\end{array}
\end{equation}
On the other hand,  using \eqref{conv1} we can deduce that 
		$$\dis j=\lim_{k\to \infty}J(v_k)=\liminf_{k\to \infty}\int_{\Omega} L(x,y_{k}(x),v_{k}(x))\, \dx =\int_{\Omega}\bar{L}(x)\,\dx\geq \int_{\Omega} L(x,\bar y(x),\bar u(x))\, \dx= J(\bar u).$$
Therefore, $(\bar u,\bar y)$ is an optimal solution to the control problem \eqref{opt}-\eqref{model}.
		This completes the proof.  		
	\end{proof}
	
	We conclude this section with the following observation.	
	\begin{remark}
		In Theorem \ref{existcontrol1} we only proved the existence of optimal solutions.  Since the functional $J$ is non-convex, in general we cannot expect a unique solution to the minimization problem \eqref{opt}-\eqref{model}.
	\end{remark}
	

	\section{First order necessary optimality conditions}\label{sec-5}

	Let us set $\mathbb{ Y}:= \V\cap \mathcal{C}^{0,\sigma}(\bar{\Omega})$ and  define the control-to-state mapping
		\begin{equation}\label{ctso}
			G:L^\infty(\Omega)\to \mathbb{ Y},\;\; u\mapsto G(u)=y
		\end{equation}
		which associates to each $u\in L^\infty(\Omega)$ the unique weak solution  $y$ of \eqref{model}. \par
	In the remainder of the paper, we assume that $s>\frac{1}{2}$, $\tilde{p}>\frac{N}{2s-1}$ and $\Omega$ is of class $\mathcal{C}^{1,\sigma}$ for $0<\sigma:=2s-\frac{N}{{\tilde{p}}}-1<1$.	
	
	The aim of this section is to derive the first order necessary optimality conditions for the conrol problem  \eqref{opt}-\eqref{model} and to characterize the optimal control. But before going further, we need some regularity results for the control-to-state operator $G$ given in \eqref{ctso}. 
	Let us introduce the vector space 
	\begin{equation}\label{space1}
	\mathbb{ X}:=\Big\{y\in \mathbb{ Y}:(-\Delta_D)^sy\in L^{\tilde{p}}(\Omega)\Big\}.
	\end{equation}
	Then,  $\mathbb{ X}$ endowed with the graph norm
		\begin{equation}\label{norm1}
\|y\|_{\mathbb{ X}}:=\|y\|_{\mathbb{ Y}}+\|(-\Delta_D)^sy\|_{L^{\tilde{p}}(\Omega)},\quad\forall y\in \mathbb{ X}
		\end{equation}
	is a Banach space. Next, let us define the mapping
	\begin{equation}\label{defG}
			\mathcal G:\mathbb{X}\times L^\infty(\Omega)\to  L^{\tilde{p}}(\Omega),\qquad (y,u)\mapsto\mathcal{G}(y,u):= (-\Delta_D)^s y-F(x,y,u).
	\end{equation}
	Then, $\mathcal G$ is well defined. Indeed, from \eqref{imp}, we have that $F(\cdot,y(\cdot),u(\cdot))\in L^{\tilde{p}}(\Omega)$ for all $(y,u)\in \mathbb{ X}\times L^\infty(\Omega)$. In addition, the state equation \eqref{model}  can be viewed as  $\mathcal{G}(y,u)=0$.
	We have the following result.
	
	\begin{lemma}\label{propd}
		The mapping $\mathcal{G}$ defined in \eqref{defG} is of class $\mathcal{C}^{2}$.
	\end{lemma}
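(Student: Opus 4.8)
The plan is to prove that $\mathcal{G}$ is of class $\mathcal{C}^2$ by decomposing it into its two summands. The linear part $(y,u)\mapsto(-\Delta_D)^s y$ is a bounded linear operator from $\mathbb{X}\times L^\infty(\Omega)$ into $L^{\tilde p}(\Omega)$ (by the very definition of the graph norm on $\mathbb{X}$), hence it is smooth and contributes nothing to the derivatives beyond first order. So everything reduces to showing that the Nemytskii (superposition) operator
\[
\Phi:\mathbb{X}\times L^\infty(\Omega)\to L^{\tilde p}(\Omega),\qquad \Phi(y,u)(x)=F(x,y(x),u(x)),
\]
is of class $\mathcal{C}^2$, after which $\mathcal{G}=(-\Delta_D)^s(\cdot)-\Phi$ is $\mathcal{C}^2$ as a difference of $\mathcal{C}^2$ maps.

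To handle $\Phi$, I would first note that the embedding $\mathbb{X}\hookrightarrow\mathbb{Y}\hookrightarrow\mathcal{C}(\bar\Omega)$ means that for $(y,u)$ in a bounded set of $\mathbb{X}\times L^\infty(\Omega)$ there is a uniform bound $M$ with $|y(x)|\le M$ a.e., and $u$ takes values in a bounded interval; this is exactly the regime in which Assumption \ref{ass1} gives uniform bounds \eqref{as2} on all partial derivatives of $F$ up to order two and the uniform-continuity-type estimate \eqref{as3} on the second derivatives. The candidate first derivative is
\[
\mathcal{G}'(y,u)(h,v)=(-\Delta_D)^s h-\frac{\partial F}{\partial y}(\cdot,y,u)h-\frac{\partial F}{\partial u}(\cdot,y,u)v,
\]
and the candidate second derivative is the bilinear map built from $\frac{\partial^2 F}{\partial y^2}$, $\frac{\partial^2 F}{\partial y\partial u}$, $\frac{\partial^2 F}{\partial u^2}$ evaluated at $(\cdot,y,u)$. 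One checks these are well-defined bounded (bi)linear maps into $L^{\tilde p}(\Omega)$ using \eqref{as2} together with $h\in\mathcal{C}(\bar\Omega)$, $v\in L^\infty(\Omega)$. Then the standard route is: (i) verify Gateaux differentiability by a Taylor expansion of $t\mapsto F(x,y(x)+th(x),u(x)+tv(x))$ with integral remainder, using \eqref{as2} and dominated convergence to pass the limit inside the $L^{\tilde p}$-norm; (ii) upgrade to Fréchet differentiability and establish continuity of $(y,u)\mapsto\mathcal{G}'(y,u)$, which is where the continuity moduli for the first and second derivatives of $F$ enter; (iii) repeat the argument one level up, now using precisely \eqref{as3} to get continuity of $(y,u)\mapsto\mathcal{G}''(y,u)$, yielding the $\mathcal{C}^2$ conclusion.

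The main obstacle is step (iii), i.e. controlling the second-order Nemytskii operator in the $L^{\tilde p}$-topology. Concretely, to show $\mathcal{G}''$ is continuous at $(y,u)$ one must estimate, for $(y_n,u_n)\to(y,u)$ in $\mathbb{X}\times L^\infty(\Omega)$, quantities like $\big\|\big(\tfrac{\partial^2 F}{\partial y^2}(\cdot,y_n,u_n)-\tfrac{\partial^2 F}{\partial y^2}(\cdot,y,u)\big)h_1h_2\big\|_{L^{\tilde p}(\Omega)}$ uniformly over unit vectors $h_1,h_2$; since $h_1,h_2$ are merely in $\mathcal{C}(\bar\Omega)$ and thus bounded, this reduces to $\big\|\tfrac{\partial^2 F}{\partial y^2}(\cdot,y_n,u_n)-\tfrac{\partial^2 F}{\partial y^2}(\cdot,y,u)\big\|_{L^{\tilde p}(\Omega)}\to 0$, which follows from \eqref{as3} (choosing $\rho$ for given $\varepsilon$, noting $\|y_n-y\|_{\mathcal{C}(\bar\Omega)}<\rho$ and $\|u_n-u\|_{L^\infty}<\rho$ eventually) and the finiteness of $|\Omega|$. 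The analogous treatment of the mixed and pure-$u$ second derivatives is identical. Once this uniform continuity is in hand, a Taylor-with-remainder estimate for $\mathcal{G}'$ gives the Fréchet second derivative and closes the argument; the linear term and the embedding facts make all the "glue" steps routine.
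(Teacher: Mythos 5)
Your proposal is correct and follows essentially the same approach as the paper's (one-line) proof: split $\mathcal{G}$ into the bounded linear part, which is automatically $\mathcal{C}^\infty$ by the graph-norm definition of $\mathbb{X}$, and the Nemytskii term, whose $\mathcal{C}^2$ regularity is credited to Assumption \ref{ass1}. The paper does not spell out the Nemytskii argument at all; you supply exactly the standard chain (embedding $\mathbb{X}\hookrightarrow\mathcal{C}(\bar\Omega)$ to get pointwise boundedness, \eqref{as2} for boundedness of the (bi)linear candidate derivatives, \eqref{as3} for the modulus of continuity of the second-order Nemytskii maps in the $L^{\tilde p}$ topology) that the paper implicitly invokes, so this is a faithful filling-in of the same route rather than a different one.
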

	
	
	\begin{proof}
		The first term of $\mathcal{G}$ is  linear and from the definition of the norm \eqref{norm1},  it is continuous  from $\mathbb{X}$ to $L^{\tilde{p}}(\Omega)$. Therefore,  it is of class $\mathcal{C}^{2}$. Assumption \ref{ass1} gives that the second term is of class $\mathcal C^2$ from $\mathbb{X}$ to $L^{\tilde{p}}(\Omega)$. This completes the proof.
	\end{proof}
	
	
	\begin{lemma}\label{lemmeG}
		The  mapping $G$ given in \eqref{ctso} is of class $\mathcal{C}^{2}$. In addition, under Assumption \ref{ass1}, the first and second directional derivatives of $G$ are given by $G'(u)v=z$ and $\rho=G''(u)(v,w)$
		where $u,v,w\in L^\infty(\Omega)$ and  $z,\rho\in \V$  are the unique weak solutions of 
		\begin{equation}\label{diff1}
		\left.
		\begin{array}{lllll}
		(-\Delta_D)^sz &=&\dis \frac{\partial F}{\partial y}(x,y,u)z+ \frac{\partial F}{\partial u}(x,y,u)v&\hbox{in} & \Omega,
		\end{array}
		\right.
		\end{equation}	
		and 
		\begin{align}\label{diff2}
		(-\Delta_D)^s\rho =&\dis \frac{\partial F}{\partial y}(x,y,u)\rho+\frac{\partial^2 F}{\partial y^2}(x,y,u)G'(u)vG'(u)w\notag\\
		&+\dis  \frac{\partial^2 F}{\partial y\partial u}(x,y,u)(wG'(u)v+vG'(u)w)+\frac{\partial^2 F}{\partial u^2}(x,y,u)vw \;\hbox{ in }  \Omega, 
		\end{align}	
		respectively.
		Moreover, for every $u\in L^\infty(\Omega)$, the linear mapping $v\mapsto G'(u)v$ can be extended to a linear continuous mapping from $L^2(\Omega)\to \V$. More precisely,  letting $M=\|y\|_{\mathcal{C}(\bar{\Omega})}$, there are two constants $C_{F,M}>0$ and $C=C(N,s,\Omega)>0$ such that
		\begin{equation}\label{est8}
		\|z\|_{\V}=\|G'(u)v\|_{\V}\leq C_{F,M} C(N,s,\Omega)\|v\|_{L^2(\Omega)}.
		\end{equation}	
	\end{lemma}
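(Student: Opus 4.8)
The natural route is the implicit function theorem applied to the map $\mathcal G:\mathbb X\times L^\infty(\Omega)\to L^{\tilde p}(\Omega)$ from \eqref{defG}. By Lemma~\ref{propd}, $\mathcal G$ is of class $\mathcal C^2$, and for a given pair $(y,u)$ with $\mathcal G(y,u)=0$ the partial derivative with respect to $y$ is
$$
\frac{\partial \mathcal G}{\partial y}(y,u)z=(-\Delta_D)^sz-\frac{\partial F}{\partial y}(x,y,u)\,z .
$$
First I would show this is an isomorphism from $\mathbb X$ onto $L^{\tilde p}(\Omega)$. Since $\partial F/\partial y\le 0$ by \eqref{as1} and $\partial F/\partial y\in L^\infty(\Omega)$ (from \eqref{as2} with $M=\|y\|_{\mathcal C(\bar\Omega)}$), the linear problem $(-\Delta_D)^sz-\tfrac{\partial F}{\partial y}z=g$ fits exactly the framework of Theorem~\ref{theo-23} with $b(x)=-\partial F/\partial y(x,y,u)\ge 0$: for every $g\in L^{\tilde p}(\Omega)\subset \mathbb V^\star$ there is a unique weak solution $z\in \mathbb V$, and by Theorem~\ref{theo-23}(c) together with the $\mathcal C^{0,\sigma}$-regularity of Theorem~\ref{theoregularity} (applicable since $\tilde p>N/(2s-1)$ and $\Omega$ is $\mathcal C^{1,\sigma}$) one gets $z\in \mathbb Y$ and $(-\Delta_D)^sz=g+\tfrac{\partial F}{\partial y}z\in L^{\tilde p}(\Omega)$, hence $z\in\mathbb X$ with $\|z\|_{\mathbb X}\le C\|g\|_{L^{\tilde p}(\Omega)}$. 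Injectivity is immediate from uniqueness. So $\partial_y\mathcal G(y,u)$ is a continuous linear bijection between Banach spaces, hence a topological isomorphism by the open mapping theorem.

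Given this, the implicit function theorem yields that $u\mapsto G(u)$, the solution of $\mathcal G(G(u),u)=0$, is of class $\mathcal C^2$ from $L^\infty(\Omega)$ into $\mathbb X\hookrightarrow\mathbb Y$. Differentiating the identity $\mathcal G(G(u),u)=0$ once in the direction $v$ gives
$$
\frac{\partial\mathcal G}{\partial y}(y,u)\,G'(u)v+\frac{\partial\mathcal G}{\partial u}(y,u)\,v=0,
$$
i.e. $(-\Delta_D)^s z-\tfrac{\partial F}{\partial y}(x,y,u)z=\tfrac{\partial F}{\partial u}(x,y,u)v$ with $z=G'(u)v$, which is precisely \eqref{diff1}. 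Differentiating a second time in directions $v,w$ and using the chain rule on the $\mathcal C^2$ map $\mathcal G$ produces, after moving the lower-order terms to the right-hand side,
$$
\frac{\partial\mathcal G}{\partial y}(y,u)\,G''(u)(v,w)=-\frac{\partial^2 F}{\partial y^2}G'(u)vG'(u)w-\frac{\partial^2 F}{\partial y\partial u}\big(wG'(u)v+vG'(u)w\big)-\frac{\partial^2 F}{\partial u^2}vw,
$$
which is \eqref{diff2} with $\rho=G''(u)(v,w)$; here the boundedness of the second derivatives of $F$ from \eqref{as2} guarantees the right-hand side lies in $L^{\tilde p}(\Omega)$, so $\rho\in\mathbb X$.

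For the final estimate \eqref{est8}, I would argue directly on \eqref{diff1}: testing the weak formulation with $\phi=z$ gives
$$
\|z\|_{\mathbb V}^2+\int_\Omega b(x)z^2\,\dx=\int_\Omega \frac{\partial F}{\partial u}(x,y,u)v\,z\,\dx\le C_{F,M}\|v\|_{L^2(\Omega)}\|z\|_{L^2(\Omega)},
$$
where $b=-\partial F/\partial y\ge 0$ and $|\partial F/\partial u|\le C_{F,M}$ with $M=\|y\|_{\mathcal C(\bar\Omega)}$ by \eqref{as2}. Dropping the nonnegative term $\int b z^2$, using the continuous embedding $\mathbb V\hookrightarrow L^2(\Omega)$ from \eqref{equality0} to bound $\|z\|_{L^2(\Omega)}\le C(N,s,\Omega)\|z\|_{\mathbb V}$, and cancelling one factor of $\|z\|_{\mathbb V}$ yields $\|z\|_{\mathbb V}\le C_{F,M}C(N,s,\Omega)\|v\|_{L^2(\Omega)}$. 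Since the right-hand side depends on $v$ only through $\|v\|_{L^2(\Omega)}$, the a priori bounded linear operator $v\mapsto G'(u)v$ extends uniquely by density from $L^\infty(\Omega)$ to a continuous linear map $L^2(\Omega)\to\mathbb V$.

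\smallskip
The main obstacle is the first step: verifying that $\partial_y\mathcal G(y,u)$ maps \emph{onto} $L^{\tilde p}(\Omega)$ with values in $\mathbb X$ (not merely in $\mathbb V$), which is where the structural hypotheses $s>1/2$, $\tilde p>N/(2s-1)$ and the $\mathcal C^{1,\sigma}$-regularity of $\Omega$ are essential — one must bootstrap the $\mathbb V$-solution provided by Lax--Milgram up to $\mathcal C^{0,\sigma}(\bar\Omega)$ via Theorem~\ref{theoregularity} and then read off $(-\Delta_D)^sz\in L^{\tilde p}(\Omega)$. Once surjectivity onto $\mathbb X$ is in hand, everything else is a routine application of the implicit function theorem and an energy estimate.
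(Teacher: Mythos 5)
Your proposal is correct and follows essentially the same route as the paper: apply Lemma~\ref{propd}, show $\partial_y\mathcal G(y,u)$ is an isomorphism from $\mathbb X$ onto $L^{\tilde p}(\Omega)$ via the linear theory, invoke the implicit function theorem to get $G\in\mathcal C^2$ and formulas \eqref{diff1}--\eqref{diff2} by implicit differentiation, and then prove \eqref{est8} by testing \eqref{diff1} with $z$, using $\partial F/\partial y\le 0$ and the embedding $\mathbb V\hookrightarrow L^2(\Omega)$. The only cosmetic difference is that the paper obtains the $L^2$-extension by observing directly that \eqref{diff1} is well posed in $\mathbb V$ for $v\in L^2(\Omega)$ rather than by a density argument, but that is a matter of presentation, not substance.
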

	
	
	\begin{proof}
		Let $u\in L^\infty(\Omega)$. It follows from Lemma \ref{propd} that $\mathcal{G}$ defined in \eqref{defG} is of class $\mathcal{C}^{2}$. Moreover,
		$$\partial_y\mathcal{G}(y,u)\varphi=(-\Delta_D)^s \varphi-\partial_yF(x,y,u)\varphi.$$
		For any $v\in L^{\tilde{p}}(\Omega)$, one can show as in Theorem \ref{theoexistence} that the problem 
		$$(-\Delta_D)^s \varphi-\partial_yF(x,y,u)\varphi=v\;\; \text{in}\;\;\Omega$$
		has a unique weak solution $\varphi$ in $\mathbb{ X}$ which depends continuously on $v$. Hence,
		$\dis \partial_y\mathcal{G}(y,u)$ defines an isomorphism from $\mathbb{X}$ to $L^{\tilde{p}}(\Omega)$. Using the Implicit Function Theorem, we can deduce that $\mathcal{G}(y,u)=(0,0)$ has a unique solution $y=G(u)$. Moreover,  the  operator $G:u\mapsto y$ is itself of class  $\mathcal{C}^{2}$. Therefore, \eqref{diff1} and \eqref{diff2} follow easily. Note that if $v\in L^2(\Omega)$, then \eqref{diff1} still has a unique weak solution $z\in \V$. Let us show the estimate \eqref{est8}. If we multiply  \eqref{diff1} with $z$ and we integrate over $\Omega$, we obtain that
		$$ \|z\|^2_{\mathbb{H}^s(\Omega)}=\dis\int_{\Omega} \frac{\partial F}{\partial y}(x,y,u)z^2\,\dx+ \int_{\Omega} \frac{\partial F}{\partial u}(x,y,u)vz\,\dx.$$
		Using \eqref{as1}, \eqref{as2} and the continuous embedding $\V\hookrightarrow L^2(\Omega)$,  we can deduce that
	$$\|z\|^2_{\V}\leq C_{F,M} C(N,s,\Omega)\|z\|_{\V}\|v\|_{L^2(\Omega)},$$
		where $M=\|y\|_{\mathcal{C}(\bar{\Omega})}$. This completes the proof.
	\end{proof}
	
Next, let us introduce the adjoint state $q\in \V$ as the unique weak solution of the adjoint equation
\begin{equation}\label{adjoint}
(-\Delta_D)^sq=\dis \frac{\partial F}{\partial y}(x,y,u)q+ \frac{\partial L}{\partial y}(x,y,u) \hbox{ in }  \Omega.
\end{equation}	

\begin{proposition}[\bf Existence of solutions to the adjoint equation]
Let $s>\frac{1}{2}$, $\tilde p>\frac{N}{2s-1}$ and $u\in L^\infty(\Omega)$. Let Assumptions \ref{ass1} and \ref{ass2} hold. If $\Omega$ is of class $\mathcal{C}^{1,\sigma}$ for $0<\sigma:=2s-\frac{N}{{\tilde{p}}}-1<1$, then there exists a unique weak solution $q\in \V\cap \mathcal{C}^{0,\sigma}(\bar{\Omega})$ to \eqref{adjoint}. Moreover, there exist two constants $C=C(N,s,\Omega)>0$ and $C_{L,M}>0$, with $M=C\left(\|F(\cdot,0,0)\|_{L^{\tilde{p}}(\Omega)}+1\right)$ such that 
\begin{equation}\label{estadjoint}
\|q\|_{\V}+\|q\|_{\mathcal{C}^{0,\sigma}(\bar{\Omega})}\leq C(N,s,\Omega)C_{L,M}.
\end{equation}
\end{proposition}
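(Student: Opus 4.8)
The plan is to observe that the adjoint equation \eqref{adjoint} is a special case of the linear problem \eqref{LE}, with a potential of the correct sign and an $L^{\tilde p}$ right-hand side, and then to invoke Theorem \ref{theo-23} together with the elliptic bootstrap already carried out in the proof of Theorem \ref{theoregularity}.

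First I would fix the state $y=G(u)\in\V\cap\mathcal{C}^{0,\sigma}(\bar\Omega)$ and recall from \eqref{estc01} in Remark \ref{estc0} that $M:=\|y\|_{\mathcal{C}(\bar\Omega)}\le C\big(\|F(\cdot,0,0)\|_{L^{\tilde p}(\Omega)}+1\big)$. Setting
$$b(x):=-\frac{\partial F}{\partial y}(x,y(x),u(x)),\qquad g(x):=\frac{\partial L}{\partial y}(x,y(x),u(x)),$$
equation \eqref{adjoint} becomes $(-\Delta_D)^sq+b(x)q=g$, i.e.\ precisely \eqref{LE}. By the monotonicity hypothesis \eqref{as1}, $b\ge 0$ a.e.\ in $\Omega$, and by \eqref{as2} (evaluated along $y$, whose sup-norm is bounded by $M$), $b\in L^\infty(\Omega)$ with $\|b\|_{L^\infty(\Omega)}\le C_{F,M}$. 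For the source term, the Mean Value Theorem applied to $L$ together with $\partial_t L(\cdot,0,0),\partial_\xi L(\cdot,0,0)\in L^\infty(\Omega)$ and the bound \eqref{as7} gives $g\in L^\infty(\Omega)$ with $\|g\|_{L^\infty(\Omega)}\le C_{L,M}$; since $\Omega$ is bounded this yields $\|g\|_{L^{\tilde p}(\Omega)}\le C(N,s,\Omega)\,C_{L,M}$ and, via the embedding $\V\hookrightarrow L^2(\Omega)\hookrightarrow\mathbb{H}^{-s}(\Omega)$, also $\|g\|_{\mathbb{H}^{-s}(\Omega)}\le C(N,s,\Omega)\,C_{L,M}$.

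Next I would apply Theorem \ref{theo-23} directly. Since $g\in\mathbb{H}^{-s}(\Omega)$, \eqref{adjoint} has a unique weak solution $q\in\V$ with $\|q\|_{\V}\le C\|g\|_{\mathbb{H}^{-s}(\Omega)}\le C(N,s,\Omega)\,C_{L,M}$; this settles uniqueness and the $\V$-part of \eqref{estadjoint}. Because $\tilde p>\frac{N}{2s-1}>\frac{N}{2s}$ and $g\in L^{\tilde p}(\Omega)$, Theorem \ref{theo-23}(c) gives moreover $q\in L^\infty(\Omega)$ with $\|q\|_{L^\infty(\Omega)}\le C\|g\|_{L^{\tilde p}(\Omega)}\le C(N,s,\Omega)\,C_{L,M}$. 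Then, writing $(-\Delta_D)^sq=h$ with $h:=g-b\,q$, we have $h\in L^{\tilde p}(\Omega)$ (as $g\in L^{\tilde p}(\Omega)$ and $b\,q\in L^\infty(\Omega)$) together with $\|h\|_{L^{\tilde p}(\Omega)}\le\|g\|_{L^{\tilde p}(\Omega)}+C_{F,M}|\Omega|^{1/\tilde p}\|q\|_{L^\infty(\Omega)}\le C(N,s,\Omega)\,C_{L,M}$. With $s>\tfrac12$, $\tilde p>\frac{N}{2s-1}$ and $\Omega$ of class $\mathcal{C}^{1,\sigma}$, the same elliptic regularity used to prove \eqref{est2} (which only requires the right-hand side of the equation to lie in $L^{\tilde p}(\Omega)$) applies to the linear equation satisfied by $q$ and yields $q\in\mathcal{C}^{0,\sigma}(\bar\Omega)$ with $\|q\|_{\mathcal{C}^{0,\sigma}(\bar\Omega)}\le C\big(\|q\|_{\V}+\|h\|_{L^{\tilde p}(\Omega)}\big)\le C(N,s,\Omega)\,C_{L,M}$. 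Adding the two bounds gives \eqref{estadjoint}.

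The step requiring the most care is the reduction in the second paragraph: one must check that the potential $b$ has the correct (nonnegative) sign, so that Theorem \ref{theo-23} is applicable verbatim, and that $g$ — hence $h=g-b\,q$ — belongs to $L^{\tilde p}(\Omega)$ with the dependence on the data claimed in the statement. Beyond this bookkeeping of constants there is no genuine analytic difficulty, since both the well-posedness in $\V$ and the $\mathcal{C}^{0,\sigma}(\bar\Omega)$-regularity are obtained by quoting, respectively, Theorem \ref{theo-23} and the bootstrap already performed for the state equation in Theorem \ref{theoregularity}.
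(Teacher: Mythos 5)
Your argument is correct and follows essentially the same route as the paper: recognize that the adjoint equation is linear with a nonnegative potential $b=-\partial_y F(\cdot,y,u)\in L^\infty(\Omega)$ and right-hand side $g=\partial_y L(\cdot,y,u)\in L^{\tilde p}(\Omega)$, then invoke the linear well-posedness and the elliptic regularity already established for the state equation. You are somewhat more explicit than the paper (which merely cites the arguments of Theorems \ref{theoexistence} and \ref{theoregularity}), in particular by identifying the adjoint equation with the model linear problem \eqref{LE} and quoting Theorem \ref{theo-23} directly, and then absorbing $bq$ into the right-hand side before applying the $\mathcal{C}^{0,\sigma}$-regularity estimate \eqref{est2}; these are natural and correct reformulations of the same idea, not a different proof.
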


\begin{proof} From \eqref{as7}, we can deduce that there exists $C_{L,M}>0$ with $M=C\left(\|F(\cdot,0,0)\|_{L^{\tilde{p}}(\Omega)}+1\right)$ such that $$\dis \left|\frac{\partial L}{\partial y}(x,y,u)\right|\leq C_{L,M}.$$ Therefore, $\dis \frac{\partial L}{\partial u}(\cdot,y,u)\in L^{\tilde{p}}(\Omega)$. Using the same arguments as in Theorems \ref{theoexistence} and \ref{theoregularity}, we can deduce the existence of a unique weak solution $q\in \V\cap \mathcal{C}^{0,\sigma}(\bar{\Omega})$ to \eqref{adjoint}. Now, if we multiply  \eqref{adjoint} with $q$ and we integrate over $\Omega$, we get using \eqref{as1} and the continuous embedding $\V\hookrightarrow L^2(\Omega)$ that
	$$\|q\|_{\V}\leq C(N,s,\Omega)C_{L,M}.$$
Using \eqref{est2}, we get \eqref{estadjoint}. This completes the proof.	
\end{proof}

\begin{remark}
As in Lemma \ref{lemmeG}, one can prove using Assumptions  \ref{ass1} and \ref{ass2} that the mapping $u\mapsto q$ is of class $\mathcal{C}^1$, where $q$ is the weak solution to the adjoint state equation \eqref{adjoint}.
\end{remark}

	
	\begin{proposition}[\bf Twice Fr\'echet differentiability of $J$]\label{diff4}
		Let $u\in L^\infty(\Omega)$ and $y$ be the weak solution of \eqref{model}. Let Assumptions \ref{ass1} and \ref{ass2} hold. Under the hypothesis of Lemma \ref{lemmeG}, the functional $J:L^\infty(\Omega)\to \R$ defined in \eqref{opt} is twice continuously Fr\'echet differentiable and for every $v,w\in L^\infty(\Omega)$, we have that
		\begin{equation}\label{diff5}
			J'(u)v=\int_{\Omega}\left( \frac{\partial L}{\partial u}(x,y,u)+ \frac{\partial F}{\partial u}(x,y,u)q\right)v\,\dx,
		\end{equation}
		and 
		\begin{equation}\label{diff6}
		\left.
		\begin{array}{lllll}
		J''(u)[v,w] &=&\!\dis \int_{\Omega} \frac{\partial^2 L}{\partial y^2}(x,y,u)G'(u)vG'(u)w\,\dx + \int_{\Omega} \frac{\partial^2 L}{\partial y\partial u}(x,y,u)(wG'(u)v+vG'(u)w)\,\dx\\
		& &\dis +\int_{\Omega} \frac{\partial^2 L}{\partial u^2}(x,y,u)vw\,\dx+ \int_{\Omega}   \frac{\partial^2 F}{\partial y\partial u}(x,y,u)(wG'(u)v+vG'(u)w)q\,\dx\\
		&&+\dis \int_{\Omega}\left(\frac{\partial^2 F}{\partial y^2}(x,y,u)G'(u)vG'(u)w+\frac{\partial^2 F}{\partial u^2}(x,y,u)vw\right)q\,\dx,
		\end{array}
		\right.
		\end{equation}	
		where $q$ is the unique weak solution of the adjoint equation \eqref{adjoint}.
	\end{proposition}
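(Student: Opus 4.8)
The plan is to write $J$ as the composition $J=\mathcal{J}\circ(G,\mathrm{id})$, where $\mathcal{J}(y,u):=\int_{\Omega}L(x,y(x),u(x))\,\dx$ and $\mathrm{id}$ denotes the identity on $L^\infty(\Omega)$, and then differentiate by the chain rule. Since Lemma \ref{lemmeG} already gives that $G:L^\infty(\Omega)\to\mathbb{Y}$ is of class $\mathcal{C}^2$ and $\mathbb{Y}\hookrightarrow\mathcal{C}(\bar{\Omega})$, the first task is to show that $\mathcal{J}:\mathcal{C}(\bar{\Omega})\times L^\infty(\Omega)\to\R$ is of class $\mathcal{C}^2$. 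For this I would appeal to the standard theory of Nemytskii (superposition) operators: under Assumption \ref{ass2}, the map $(y,u)\mapsto L(\cdot,y(\cdot),u(\cdot))$ is well defined and continuous from $\mathcal{C}(\bar{\Omega})\times L^\infty(\Omega)$ into $L^\infty(\Omega)$ by \eqref{as6}, its candidate first derivative is multiplication by $\big(\frac{\partial L}{\partial y}(\cdot,y,u),\frac{\partial L}{\partial u}(\cdot,y,u)\big)$ (bounded thanks to \eqref{as7}), and its candidate second derivative is assembled from the second order partials of $L$; the uniform continuity estimate \eqref{as8} is precisely what lets one pass from the pointwise second order Taylor expansion of $L$ to a genuine Fr\'echet expansion in $L^\infty(\Omega)$ and to the continuity of the second derivative. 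Composing with the bounded linear functional $g\mapsto\int_{\Omega}g\,\dx$ from $L^\infty(\Omega)$ to $\R$ (which is $\mathcal{C}^\infty$) shows $\mathcal{J}\in\mathcal{C}^2$, hence $J\in\mathcal{C}^2$ by the chain rule.

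The formula \eqref{diff5} is then obtained as follows. The chain rule gives, with $z:=G'(u)v\in\V$,
$$J'(u)v=\int_{\Omega}\left(\frac{\partial L}{\partial y}(x,y,u)\,z+\frac{\partial L}{\partial u}(x,y,u)\,v\right)\dx.$$
I would eliminate the $z$-term via the adjoint state: testing the adjoint equation \eqref{adjoint} with $z$ and the linearized equation \eqref{diff1} with $q$, and using the symmetry of the bilinear form $(w_1,w_2)\mapsto\int_{\Omega}(-\Delta_D)^{s/2}w_1\,(-\Delta_D)^{s/2}w_2\,\dx$, one gets
$$\int_{\Omega}\frac{\partial L}{\partial y}(x,y,u)\,z\,\dx=\int_{\Omega}\frac{\partial F}{\partial u}(x,y,u)\,v\,q\,\dx,$$
and substituting this into the previous identity yields \eqref{diff5}.

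For $J''(u)[v,w]$ I would differentiate $u\mapsto J'(u)v$ once more in the direction $w$. Here $y=G(u)$ varies as $G'(u)w$, while $z=G'(u)v$ varies as $G''(u)(v,w)=:\rho\in\V$, which solves \eqref{diff2}; differentiating under the integral sign (justified by the $\mathcal{C}^2$ regularity established above and by $\V\hookrightarrow L^2(\Omega)$, so that all the products of $\V$-functions with the bounded coefficients arising from Assumption \ref{ass2} are integrable) and collecting terms produces the right-hand side of \eqref{diff6} except for the extra term $\int_{\Omega}\frac{\partial L}{\partial y}(x,y,u)\,\rho\,\dx$. I would then remove this term exactly as in the first order case: testing \eqref{adjoint} with $\rho$ and \eqref{diff2} with $q$ and invoking the symmetry of the bilinear form gives
$$\int_{\Omega}\frac{\partial L}{\partial y}(x,y,u)\,\rho\,\dx=\int_{\Omega}\left(\frac{\partial^2 F}{\partial y^2}(x,y,u)\,G'(u)v\,G'(u)w+\frac{\partial^2 F}{\partial y\partial u}(x,y,u)\big(wG'(u)v+vG'(u)w\big)+\frac{\partial^2 F}{\partial u^2}(x,y,u)\,vw\right)q\,\dx,$$
and substituting this back rearranges exactly into \eqref{diff6}.

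The main obstacle is the first step, namely the rigorous proof that the superposition operator associated with $L$ is twice continuously Fr\'echet differentiable between $\mathcal{C}(\bar{\Omega})\times L^\infty(\Omega)$ and $L^\infty(\Omega)$; this is where the structural hypotheses \eqref{as7}--\eqref{as8} on $L$ are genuinely used. Once this (together with the $\mathcal{C}^2$ regularity of $G$ from Lemma \ref{lemmeG}) is available, the remainder is the chain rule plus two completely parallel test-and-use-symmetry manipulations with the adjoint equation, which are routine.
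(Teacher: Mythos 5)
Your proposal is correct and follows essentially the same route as the paper's proof: chain rule using the $\mathcal{C}^2$ regularity of $G$ from Lemma~\ref{lemmeG}, then eliminating the terms $\int_\Omega \frac{\partial L}{\partial y}(x,y,u)\,G'(u)v\,\dx$ and $\int_\Omega \frac{\partial L}{\partial y}(x,y,u)\,G''(u)(v,w)\,\dx$ by testing the adjoint equation \eqref{adjoint} against $z$ (resp.\ $\rho$) and the linearized equations \eqref{diff1} (resp.\ \eqref{diff2}) against $q$ and invoking the symmetry of the form. The only difference is that you spell out why the Nemytskii operator associated with $L$ is $\mathcal{C}^2$ from $\mathcal{C}(\bar\Omega)\times L^\infty(\Omega)$ into $L^\infty(\Omega)$ via \eqref{as6}--\eqref{as8}, a step the paper states without elaboration.
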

	
	
	\begin{proof}
		Firstly, we have that  $J$ is twice continuously Fr\'echet differentiable, since by Lemma \ref{lemmeG}, $G$ has this property.\par 		
Secondly,  let $u,v,w\in L^\infty(\Omega)$.  After some straightforward calculations, we get
		\begin{equation}\label{e1}
	J'(u)v=\int_{\Omega} \frac{\partial L}{\partial u}(x,y,u)v\,\dx+ \int_{\Omega}\frac{\partial L}{\partial y}(x,y,u)G'(u)v\,\dx.
	\end{equation}
		Now, if we multiply  \eqref{diff1} with $q$ weak solution to the adjoint state \eqref{adjoint} and we integrate by parts over $\Omega$, we arrive to
		\begin{equation}\label{e2}
	 \int_{\Omega}\frac{\partial L}{\partial y}(x,y,u)G'(u)v\,\dx=\int_{\Omega}\frac{\partial F}{\partial u}(x,y,u)qv\,\dx.
		\end{equation}
Combining \eqref{e1}-\eqref{e2} leads us to \eqref{diff5}. On the other hand, after some calculations, we obtain that
	\begin{equation}\label{e3}
\left.
\begin{array}{lllll}
J''(u)[v,w] &=&\!\dis \int_{\Omega} \frac{\partial^2 L}{\partial y^2}(x,y,u)G'(u)vG'(u)w\,\dx + \int_{\Omega} \frac{\partial^2 L}{\partial y\partial u}(x,y,u)(wG'(u)v+vG'(u)w)\,\dx\\
& &\dis +\int_{\Omega} \frac{\partial^2 L}{\partial u^2}(x,y,u)vw\,\dx+\int_{\Omega} \frac{\partial L}{\partial y}(x,y,u)G''(u)[v,w]\,\dx.
\end{array}
\right.
\end{equation}	
Multiplying  \eqref{diff2} with $q$ weak solution of  \eqref{adjoint} and after an integration by parts over $\Omega$, we get 
	\begin{equation}\label{e4}
\left.
\begin{array}{lllll}
\dis \int_{\Omega} \frac{\partial L}{\partial y}(x,y,u)G''(u)[v,w]\,\dx&=& \dis \int_{\Omega}\left(\frac{\partial^2 F}{\partial y^2}(x,y,u)G'(u)vG'(u)w+\frac{\partial^2 F}{\partial u^2}(x,y,u)vw\right)q\,\dx\\
&&+\dis \int_{\Omega}   \frac{\partial^2 F}{\partial y\partial u}(x,y,u)(wG'(u)v+vG'(u)w)q\,\dx.
\end{array}
\right.
\end{equation}	
Combining \eqref{e3}-\eqref{e4}, we deduce \eqref{diff6}. The proof is finished.
	\end{proof}
	
	
We introduce the following notion of local  solutions.
	
	\begin{definition}\label{defopt}
	Let $1\le p\le \infty$. 
		We say that $u\in \U$ is an $L^p$-local solution of \eqref{opt} if there exists $\varepsilon>0$ such that 
\begin{equation}\label{pls}		
	J(u)\leq J(v) \mbox{ for every }v\in \U\cap B_\varepsilon^p(u),
\end{equation}	
	where $B_\varepsilon^p(u):=\{u\in L^p(\Omega):\|v-u\|_{L^p(\Omega)}\leq \varepsilon\}$. We say that $u$ is a strict local minimum of \eqref{opt} if the  inequality \eqref{pls} is strict whenever $v\neq u$.
	\end{definition}
	
Let us introduce the Hamiltonian $H$ of \eqref{opt}-\eqref{model} given by
\begin{equation}\label{ham}
H:\Omega\times \R\times\R\times\R\to \R, \;\;H(x,t,\eta,\xi)=L(x,t,\xi)+\eta F(x,t,\xi).
\end{equation}
Note that from Assumptions \ref{ass1} and \ref{ass2}, $H$ is of class $\mathcal{C}^2$ with respect to the last component.\par
Before going further, we establish the following important result.

\begin{proposition}\label{convergence}
	Assume that $s>\frac{1}{2}$, $\frac{N}{2s-1}<{\tilde{p}}$ and $\Omega$ is of class $\mathcal{C}^{1,\sigma}$ for $0<\sigma:=2s-\frac{N}{{\tilde{p}}}-1<1$.  Consider a sequence $(u_k)_{k\geq1}$ bounded in $L^\infty(\Omega)$ converging strongly to $u$ in  $L^{\tilde{p}}(\Omega)$, as $k\to\infty$. If we denote by $y_k:=y(u_k)$ and $y_u:=y(u)$ the states associated to $u_k$ and to $u$, respectively, then 
	\begin{equation}\label{conv0}
	\dis \lim_{k\to \infty}\left(\|y_{k}-y_u\|_{\V}+\|y_{k}-y_u\|_{\mathcal{C}^{0,\sigma}(\bar{\Omega})}\right)=0.
	\end{equation}
\end{proposition}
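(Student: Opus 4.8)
The plan is to show convergence of the states by a standard energy estimate on the difference $w_k := y_k - y_u$, bootstrapped to the H\"older norm via the regularity result of Theorem \ref{theoregularity}. First I would write the two weak formulations \eqref{Eq-Def31int} for $y_k$ (with control $u_k$) and for $y_u$ (with control $u$), subtract them, and test with $\phi = w_k \in \V$. This gives
\[
\|w_k\|_{\V}^2 = \int_\Omega \big(F(x,y_k,u_k) - F(x,y_u,u)\big) w_k \,\dx .
\]
Now I would split the integrand by inserting the intermediate term $F(x,y_u,u_k)$, writing
\[
F(x,y_k,u_k) - F(x,y_u,u) = \big(F(x,y_k,u_k) - F(x,y_u,u_k)\big) + \big(F(x,y_u,u_k) - F(x,y_u,u)\big),
\]
apply the Mean Value Theorem in the second variable to the first bracket (producing a factor $\frac{\partial F}{\partial y}$ evaluated at an intermediate point, which is $\le 0$ by \eqref{as1}), and apply it in the third variable to the second bracket (producing a factor $\frac{\partial F}{\partial u}$, bounded by $C_{F,M}$ via \eqref{as2}, since all states $y_k,y_u$ are uniformly bounded in $\mathcal{C}(\bar\Omega)$ by \eqref{estc01} and the controls are uniformly bounded in $L^\infty$). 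The monotonicity term has the good sign and can be discarded; the remaining term is estimated by H\"older's inequality. The delicate point here is the exponent bookkeeping: one gets $\|w_k\|_\V^2 \le C_{F,M}\int_\Omega |u_k - u|\,|w_k|\,\dx \le C_{F,M}\|u_k-u\|_{L^{\tilde p}(\Omega)}\|w_k\|_{L^{\tilde p'}(\Omega)}$, and since $\tilde p > N/(2s-1) > N/2s$ we have $\tilde p' < \tfrac{N}{N-2s}$ (when $N>2s$; the other cases are easier), so $\V = \mathbb{H}^s(\Omega) \hookrightarrow L^{\tilde p'}(\Omega)$ by \eqref{equality0}, giving $\|w_k\|_\V \le C \|u_k - u\|_{L^{\tilde p}(\Omega)} \to 0$.

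Next I would upgrade this to the $\mathcal{C}^{0,\sigma}(\bar\Omega)$ convergence. Observe that $w_k$ solves $(-\Delta_D)^s w_k = F(x,y_k,u_k) - F(x,y_u,u)$ in $\Omega$ in the weak sense, so by the estimate \eqref{est2} of Theorem \ref{theoregularity} (applicable since all the structural hypotheses on $s$, $\tilde p$, $\Omega$ are in force),
\[
\|w_k\|_{\mathcal{C}^{0,\sigma}(\bar\Omega)} \le C\Big(\|w_k\|_\V + \|F(\cdot,y_k,u_k) - F(\cdot,y_u,u)\|_{L^{\tilde p}(\Omega)}\Big).
\]
The first term on the right tends to $0$ by the previous paragraph. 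For the second term I would again use the same Mean-Value-Theorem splitting: the $y$-part is bounded pointwise by $C_{F,M}|w_k|$ and hence by $C_{F,M}\|w_k\|_{\mathcal{C}(\bar\Omega)} \le C_{F,M}\|w_k\|_{\mathcal{C}^{0,\sigma}(\bar\Omega)}$, which is not yet known to be small — so instead I should bound it by $C_{F,M}\|w_k\|_{L^{\tilde p}(\Omega)} \le C\|w_k\|_\V \to 0$; the $u$-part is bounded pointwise by $C_{F,M}|u_k - u|$, hence by $C_{F,M}\|u_k-u\|_{L^{\tilde p}(\Omega)} \to 0$. Thus the right-hand side vanishes as $k\to\infty$, yielding $\|w_k\|_{\mathcal{C}^{0,\sigma}(\bar\Omega)}\to 0$, and combining the two gives \eqref{conv0}.

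The main obstacle is purely one of care rather than of depth: one must make sure that every application of the Mean Value Theorem uses the $L^\infty$-bound $M$ on the states (valid uniformly in $k$ thanks to \eqref{estc01}) so that the constants $C_{F,M}$ from \eqref{as2} are genuinely independent of $k$, and one must keep the exponents consistent — in particular that $\tilde p'$ falls in the admissible range for the Sobolev embedding \eqref{equality0} of $\V$, and that in the H\"older-regularity step the nonlinear term is controlled by $\|w_k\|_\V$ (which is small) rather than by $\|w_k\|_{\mathcal{C}^{0,\sigma}}$ (which would be circular). Once these exponent checks are arranged, the rest is routine. Note also that the hypothesis that $u_k$ is merely bounded in $L^\infty$ while converging in $L^{\tilde p}$ (not in $L^\infty$) is exactly what forces us to route the continuity estimates through $L^{\tilde p}$ norms rather than $L^\infty$ norms.
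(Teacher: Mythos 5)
There is a genuine gap in the second step of your argument, at the point where you write $\|w_k\|_{L^{\tilde p}(\Omega)}\le C\|w_k\|_{\V}$. This requires the continuous embedding $\V=\mathbb{H}^s(\Omega)\hookrightarrow L^{\tilde p}(\Omega)$, which is \emph{not} guaranteed here. Since $s<1$ and $N\ge 2$ one always has $N>2s$, so \eqref{equality0} only gives $\V\hookrightarrow L^{2N/(N-2s)}(\Omega)$, and there is no upper bound on $\tilde p$ in the hypotheses beyond $\tilde p>N/(2s-1)$. In fact for $N\ge 4$ one has $N/(2s-1)\ge 2N/(N-2s)$ for every $s\in(\tfrac12,1)$ (this reduces to $N\ge 6s-2$, and $6s-2<4$), so \emph{every} admissible $\tilde p$ lies strictly above the Sobolev exponent $2N/(N-2s)$ and the embedding fails outright. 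Your own earlier exponent check involves the conjugate $\tilde p'=\tilde p/(\tilde p-1)$, which does lie in the admissible range — the mistake is carrying this over to $\tilde p$ itself.

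The paper closes this exact step differently. Having already shown $\|y_k-y_u\|_{\V}\to 0$, it uses the uniform bound $\|y_k\|_{\mathcal{C}^{0,\sigma}(\bar\Omega)}\le M_1$ coming from \eqref{estc00} (which requires only that $(u_k)$ is bounded in $L^\infty(\Omega)$, which you do have) together with the \emph{compact} embedding $\mathcal{C}^{0,\sigma}(\bar\Omega)\hookrightarrow\mathcal{C}(\bar\Omega)$. A subsequence argument then upgrades the $\V$-convergence to $y_k\to y_u$ in $\mathcal{C}(\bar\Omega)$, hence in $L^{\tilde p}(\Omega)$; only then does the estimate
\[
\|F(\cdot,y_k,u_k)-F(\cdot,y_u,u)\|_{L^{\tilde p}(\Omega)}\le C_{F,M}\bigl(\|y_k-y_u\|_{L^{\tilde p}(\Omega)}+\|u_k-u\|_{L^{\tilde p}(\Omega)}\bigr)
\]
close the H\"older-norm step via \eqref{est2}. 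An alternative patch to your argument, avoiding the compactness step, would be to interpolate
\[
\|w_k\|_{L^{\tilde p}(\Omega)}\le \|w_k\|_{L^2(\Omega)}^{2/\tilde p}\,\|w_k\|_{L^\infty(\Omega)}^{1-2/\tilde p},
\]
using $\|w_k\|_{L^2}\le C\|w_k\|_{\V}\to 0$ and the uniform $L^\infty$-bound furnished by \eqref{estc00}; but in either case you must invoke the uniform $\mathcal{C}^{0,\sigma}$ (or at least $L^\infty$) bound on the states, which your write-up does not use at this point. Your first step (the energy estimate in $\V$ with the splitting through $F(x,y_u,u_k)$, the monotonicity in $y$, and the conjugate-exponent bookkeeping) agrees with the paper's and is correct.
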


\begin{proof}
	Let $(u_k)_{k\geq1}$ be a  bounded sequence  in $L^\infty(\Omega)$ such that, as $k\to\infty$, 
	\begin{equation}\label{l0}
	u_k  \to   u  \text{ strongly in }  L^{\tilde{p}}(\Omega). 
	\end{equation}
Using \eqref{est0}, we can deduce that there is a  constant $M>0$ such that for all $k\ge 1$, 
	\begin{equation}\label{est02}
	\|y_{k}\|_{\V}+\|y_{k}\|_{L^\infty(\Omega)}\leq M.
	\end{equation}
	Moreover, $y_{k}-y_{u}$ satisfies
	\begin{equation}\label{1}
	\int_{\Omega}(-\Delta_D)^{\frac{s}{2}}(y_{k}-y_{u}) (-\Delta_D)^{\frac{s}{2}}\phi\,\dx=\dis\int_{\Omega} (F(x,y_{k},u_k)-F(x,y_{u},u)) \phi \,\dx\;\;\;\;\forall \phi\in \V.
	\end{equation}
	Taking $\phi=y_{k}-y_{u}$ as a test function  in \eqref{1} and thanks to the monotonicity of $F$ and Assumption \ref{ass1}, we get
	\begin{equation*}
	\begin{array}{llll}
	\dis \|y_{k}-y_{u}\|^2_{\V}&=& \dis\int_{\Omega} (F(x,y_{k},u_k)-F(x,y_{u},u))(y_{k}-y_{u})\,\dx\\
	&=& \dis \dis\int_{\Omega} (F(x,y_{k},u_k)-F(x,y_{u},u_k))(y_{k}-y_{u})\,\dx\\
	&&+\dis\int_{\Omega} (F(x,y_{u},u_k)-F(x,y_{u},u))(y_{k}-y_{u})\,\dx\\
	&\leq& \dis\int_{\Omega} (F(x,y_{u},u_k)-F(x,y_{u},u))(y_{k}-y_{u})\,\dx\\
	&\leq& \dis C_{F,M}\int_{\Omega} |u_k-u||y_{k}-y_{u}|\,\dx\\
	&\leq& \dis C_{F,M}\|u_k-u\|_{L^p(\Omega)}\|y_{k}-y_{u}\|_{L^{p'}(\Omega)},
	\end{array}
	\end{equation*}
	where $p'=\frac{{\tilde{p}}}{{\tilde{p}}-1}$. Since ${\tilde{p}}>\frac{N}{2s-1}$,  we have that $p'<\frac{N}{N-2s+1}<\frac{N}{N-2s}$. Therefore, if $N>2s$, we  get  from \eqref{equality0} that $\V\hookrightarrow L^{\frac{2N}{N-2s}}(\Omega)\hookrightarrow L^{p'}(\Omega)$. Also if $N\leq 2s$, it follows from \eqref{equality0} that $\V\hookrightarrow L^{p'}(\Omega)$. Therefore, 
	\begin{equation*}
	\dis \|y_{k}-y_{u}\|^2_{\V}\leq \dis C_{F,M}C(p,\Omega)\|u_k-u\|_{L^p(\Omega)}\|y_{k}-y_{u}\|_{\V},
	\end{equation*}
	and so
	\begin{equation}\label{l1}
	\dis \|y_{k}-y_{u}\|_{\V}\leq \dis C_{F,M}C(p,\Omega)\|u_k-u\|_{L^p(\Omega)}.
	\end{equation}
	Taking the limit,  as $k\to \infty$,   in \eqref{l1} while using \eqref{l0} yields 
	the strong convergence of the sequence $(y_{k})_k$ to $y_u$ in $\V$.  Now, \eqref{estc00} along with the boundedness of the sequence $(u_k)_{k\geq1}$ in $L^\infty(\Omega)$ imply that the sequence $(y_{k})_{k\geq1}$ is bounded in $\mathcal{C}^{0,\sigma}(\bar{\Omega})$. Therefore, using the compactness of the embedding $\mathcal{C}^{0,\sigma}(\bar{\Omega})\hookrightarrow \mathcal{C}(\bar{\Omega})$ and the latter convergence, we arrive to $y_{k}\to y_u$ strongly in $\mathcal{C}(\bar{\Omega})$, as $k\to\infty$. Hence,  as $k\to\infty$,  we have that
	\begin{equation}\label{strong}
	y_{k}\to y_u \;\;\;\text{strongly in}\;\;\;L^{\tilde{p}}(\Omega).
	\end{equation} 
	On the other hand, recalling again that $y_{k}-y_{u}$ satisfies
	\begin{equation}\label{2}
	(-\Delta_D)^{s}(y_{k}-y_{u})=F(x,y_{k},u_k)-F(x,y_{u},u),
	\end{equation}
	we can deduce from \eqref{est2} that
	\begin{equation}\label{est6}
	\|y_{k}-y_{u}\|_{\mathcal{C}^{0,\sigma}(\bar\Omega)}\leq C\left(\|y_{k}-y_{u}\|_{\V}+\|F(\cdot,y_{k}(\cdot),u_k(\cdot))-F(\cdot,y_u(\cdot),u(\cdot))\|_{L^{\tilde{p}}(\Omega)}\right).
	\end{equation} 
	Now, using \eqref{as2} and \eqref{est02}, we obtain that 
	\begin{equation}\label{est7}
	\|F(\cdot,y_{k}(\cdot),u_k(\cdot))-F(\cdot,y_u(\cdot),u(\cdot))\|_{L^{\tilde{p}}(\Omega)}\leq C_{F,M}\left(\|y_{k}-y_u\|_{L^{\tilde{p}}(\Omega)}+\|u_k-u\|_{L^{\tilde{p}}(\Omega)}\right).
	\end{equation} 
	Combining \eqref{l1}, \eqref{est6}, \eqref{est7}, \eqref{l0} and letting $k\to \infty$ in \eqref{l1} and \eqref{est6},  while using \eqref{strong}, we get \eqref{conv0}. The proof is finished.
\end{proof}

	The following result is crucial for the rest of the paper.
	
	\begin{theorem}[\bf First order necessary optimality conditions]\label{theoSO}
	Assume that Assumptions \ref{ass1} and \ref{ass2} hold. 
		Let $u\in\mathcal U$ be an $L^p$-local minimum ($1\leq p\leq \infty $) for  \eqref{opt}-\eqref{model}. Then,
		\begin{equation}\label{ineq}
			J'(u)(v-u)\geq 0\;\;\;\text{for every}\;\;\; v\in \U,
		\end{equation}
	equivalently	
		\begin{equation}\label{ineq1}
		\int_{\Omega}\frac{\partial H}{\partial u}(x,y,q,u)(v-u)\,\dx\geq 0\;\;\;\text{for every}\;\;\; v\in \U,
		\end{equation}
		where $q$ is the unique weak solution of \eqref{adjoint} and $H$ is the Hamiltonian given in \eqref{ham}. Moreover, if $1\le p<\infty$, then
		\begin{equation}\label{min}
		\dis \int_{\Omega} H\left(x,y(x),q(x),u(x)\right)\,\dx=\min_{v\in\mathcal U}\int_{\Omega} H\left(x,y(x),q(x),v(x)\right)\,\dx.
		\end{equation} 
	\end{theorem}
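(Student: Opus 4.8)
The plan is to prove the three assertions in turn: the variational inequality \eqref{ineq} and its reformulation \eqref{ineq1} are routine first–order arguments, while the integral minimum principle \eqref{min} requires a spike (needle) variation, for which the finiteness of $p$ is essential. For \eqref{ineq}: since $\U$ is convex, for $v\in\U$ and $\lambda\in(0,1]$ the function $u_\lambda:=(1-\lambda)u+\lambda v$ lies in $\U$ and $\|u_\lambda-u\|_{L^p(\Omega)}=\lambda\|v-u\|_{L^p(\Omega)}\to 0$ as $\lambda\to0^+$ (valid also for $p=\infty$, since $v-u\in L^\infty(\Omega)$); hence $u_\lambda\in\U\cap B_\varepsilon^p(u)$ for $\lambda$ small, so $J(u)\le J(u_\lambda)$ by local optimality, and dividing by $\lambda$ and letting $\lambda\to0^+$ gives $J'(u)(v-u)\ge0$, using the Fréchet differentiability of $J$ on $L^\infty(\Omega)$ from Proposition \ref{diff4}. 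The equivalence with \eqref{ineq1} is immediate from the formula \eqref{diff5} for $J'$ and the identity $\frac{\partial H}{\partial u}(x,y,q,u)=\frac{\partial L}{\partial u}(x,y,u)+q\,\frac{\partial F}{\partial u}(x,y,u)$ read off from \eqref{ham}.

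For \eqref{min} with $p<\infty$, fix $v\in\U$ and let $x_0\in\Omega$ be a Lebesgue point of $x\mapsto H(x,y(x),q(x),v(x))-H(x,y(x),q(x),u(x))$; for a.e.\ such $x_0$ the ball $B_r:=B(x_0,r)$ lies in $\Omega$ for $r$ small. Set $u_r:=u+\chi_{B_r}(v-u)\in\U$; since $\|u_r-u\|_{L^p(\Omega)}\le(\beta-\alpha)|B_r|^{1/p}\to 0$, local optimality gives $J(u)\le J(u_r)$ for $r$ small. Writing $y_r:=y(u_r)$, $\delta_r:=y_r-y$ and $b_r(x):=-\int_0^1\frac{\partial F}{\partial y}(x,y+t\delta_r,u_r)\,dt$, the difference solves $(-\Delta_D)^s\delta_r+b_r\delta_r=\chi_{B_r}\big(F(x,y,v)-F(x,y,u)\big)$, where $b_r\ge0$ by \eqref{as1} and $\|b_r\|_{L^\infty(\Omega)}$ is bounded uniformly in $r$ by \eqref{as2} and \eqref{est1}. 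Since the right-hand side is bounded and supported in $B_r$, parts (c)--(d) of Theorem \ref{theo-23}, applied to this equation (whose constants stay bounded for such $b_r\ge0$), yield $\|\delta_r\|_{L^\infty(\Omega)}\le C|B_r|^{1/\tilde p}$ and $\|\delta_r\|_{L^1(\Omega)}\le C|B_r|$, hence $\|\delta_r\|_{L^2(\Omega)}^2\le\|\delta_r\|_{L^\infty(\Omega)}\|\delta_r\|_{L^1(\Omega)}\le C|B_r|^{1+1/\tilde p}=o(|B_r|)$.

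Next I would expand $J(u_r)-J(u)=\int_\Omega[L(x,y_r,u_r)-L(x,y,u)]\,\dx$. A first-order Taylor expansion of $L$ in the state variable, together with the above bounds on $\delta_r$ and the uniform continuity hypotheses \eqref{as3}, \eqref{as8}, gives
\[
J(u_r)-J(u)=\int_\Omega\frac{\partial L}{\partial y}(x,y,u)\,\delta_r\,\dx+\int_{B_r}\big(L(x,y,v)-L(x,y,u)\big)\,\dx+o(|B_r|).
\]
Testing the adjoint equation \eqref{adjoint} against $\delta_r$ and the $\delta_r$-equation against $q$, and expanding $F$ in the state variable in the same way, the first integral on the right equals $\int_{B_r}q\,\big(F(x,y,v)-F(x,y,u)\big)\,\dx+o(|B_r|)$ (all error terms being controlled, on $\Omega\setminus B_r$ by $\|q\|_{L^\infty}\|\delta_r\|_{L^2}^2$ and on $B_r$ by $|B_r|\,\|q\|_{L^\infty}\|\delta_r\|_{L^\infty}$). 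Hence
\[
0\le J(u_r)-J(u)=\int_{B_r}\big(H(x,y,q,v)-H(x,y,q,u)\big)\,\dx+o(|B_r|).
\]
Dividing by $|B_r|$ and letting $r\to0$, Lebesgue's differentiation theorem gives $H(x_0,y(x_0),q(x_0),v(x_0))\ge H(x_0,y(x_0),q(x_0),u(x_0))$ for a.e.\ $x_0\in\Omega$; integrating over $\Omega$ and recalling $u\in\U$ yields \eqref{min}.

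The main obstacle is the last part: establishing that the leading-order term of $J(u_r)-J(u)$ is exactly $\int_{B_r}\big(H(x,y,q,v)-H(x,y,q,u)\big)\,\dx$. This rests on controlling the state perturbation at the sharp scale, namely $\|\delta_r\|_{L^1(\Omega)}=O(|B_r|)$ together with $\|\delta_r\|_{L^\infty(\Omega)}\to0$, so that every quadratic remainder is $o(|B_r|)$; this is precisely where the monotonicity \eqref{as1} (which makes the linearized operator have a nonnegative zero-order coefficient) and the fine $L^1\!\to\!L^r$ estimate of Theorem \ref{theo-23}(d) are indispensable, the rest being bookkeeping with Taylor's formula, the adjoint identity, and Lebesgue points.
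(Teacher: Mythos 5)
Your proof of \eqref{ineq} and its reformulation \eqref{ineq1} coincides with the paper's: both use the convexity of $\U$, divide the difference quotient by $\lambda$, and let $\lambda\downarrow 0$, then read off \eqref{ineq1} from \eqref{diff5} and the definition of $H$.

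For \eqref{min}, however, you take a genuinely different route. The paper uses a Lyapunov-type variation: via Lyapunov's convexity theorem it builds measurable sets $E_\rho\subset\Omega$ with $|E_\rho|=\rho|\Omega|$ and $\tfrac{1}{\rho}\chi_{E_\rho}\stackrel{\star}{\rightharpoonup}1$ in $L^\infty(\Omega)$, sets $u_\rho=v$ on $E_\rho$ and $u$ outside, shows $z_\rho:=\tfrac{1}{\rho}(y_\rho-y)\to z$ strongly in $\V$ (using the compact embedding $L^{\tilde p}(\Omega)\hookrightarrow\V^\star$ to make the key term $\|(1-\tfrac{1}{\rho}\chi_{E_\rho})h\|_{\V^\star}$ small), and passes to the limit in $\tfrac{J(u_\rho)-J(u)}{\rho}$; the pointwise principle is only derived afterwards, in Proposition \ref{pr1}, by further localizing the integral identity. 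Your argument instead performs a classical spike (needle) variation on small balls around Lebesgue points, controls the state perturbation at the sharp scale ($\|\delta_r\|_{L^1}=O(|B_r|)$, $\|\delta_r\|_{L^\infty}=O(|B_r|^{1/\tilde p})$) using parts (c)--(d) of Theorem \ref{theo-23}, and concludes by Lebesgue differentiation; this proves the pointwise Pontryagin principle directly and obtains \eqref{min} by integration, so in effect you get Theorem \ref{theoSO} and Proposition \ref{pr1} from a single variation. Both arguments are valid; yours is more classical/elementary (no Lyapunov convexity, no weak compactness in $\V^\star$), while the paper's avoids one point you should make explicit: you invoke Theorem \ref{theo-23}(c)--(d) for the operator $(-\Delta_D)^s+b_r$ and tacitly assume the constants are uniform over the family $(b_r)$. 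This does hold because $b_r\ge 0$ and $\|b_r\|_{L^\infty}$ is uniformly bounded (a comparison argument shows the estimates of Theorem \ref{theo-23} are monotone in $b$, so the case $b=0$ dominates), but it is not stated in Theorem \ref{theo-23} and deserves a line of justification. With that addendum, your proof is correct.
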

	
	
	\begin{proof}
		Let $v\in \U$ be arbitrary. Since $\U$ is convex,  we have that $u+\lambda(v-u)\in \U$ for all $\lambda\in (0,1]$.  Since $u$ is an $L^p$-local minimum,  it follows that $J(u+\lambda(v-u))\geq J(u)$. Hence,
		$$\dis\frac{J(u+\lambda(v-u))-J(u)}{\lambda}\geq 0 \mbox{ for all } \lambda\in (0,1].$$
		Letting $\lambda\downarrow 0$ in the latter inequality, we get 	\eqref{ineq}.  This implies that
	\begin{equation}\label{SRW}
	\dis\int_{\Omega}\left( \frac{\partial L}{\partial u}(x,y,u)+ \frac{\partial F}{\partial u}(x,y,u)q\right)(v-u)\,\dx\geq 0\;\;\;\text{for every}\;\;\; v\in \U.
	\end{equation}
	Using the form of $H$ given in \eqref{ham},  then \eqref{ineq1} follows from \eqref{SRW}. \par
Let us prove \eqref{min}. The proof is inspired from the local case given in \cite[Theorem 4.1]{casas2020o}.  Assume that $1\le p<\infty$. Let $v\in \mathcal{U}$ be a fixed control. We define the function $h(x):=F(x,y(x),v(x))-F(x,y(x),u(x))$ for a.e. $x\in \Omega$. Then, from \eqref{imp}, we deduce that $h\in L^{\tilde{p}}(\Omega)$. Next,  let $(v_j)_{j\geq1}$ be a dense sequence in $L^1(\Omega)$. For every $k\geq 1$, we define the function $g_k\in \left(L^1(\Omega)\right)^{k+1}$ by $g_k:=(1,v_1,\cdots, v_k)$. Given $\rho\in (0,1)$ arbitrarily, we deduce from Lyapunov's convexity theorem the existence of measurable sets $E^k_{\rho}\subset \Omega$ such that
\begin{equation}
\dis \int_{E^k_{\rho}}g_k(x)\,\dx=\rho\int_{\Omega}g_k(x)\,\dx \;\;\;\forall k\geq 1.
\end{equation}
Looking at the first component of the above vector identity, we have that
\begin{equation}
\dis  |E^k_{\rho}|=\rho|\Omega|\;\;\;\forall k\geq 1.
\end{equation}
Now, considering the remaining components, we observe that
$$\int_{\Omega}\frac{1}{\rho}\chi_{E^k_{\rho}}v_j\,\dx=\frac{1}{\rho}\int_{E^k_{\rho}}v_j\,\dx=\int_{\Omega}v_j(x)\,\dx\;\;\;\forall j\geq 1.$$
This implies that
\begin{equation}\label{dens}
\dis \lim_{k\to \infty}\int_{\Omega}\frac{1}{\rho}\chi_{E^k_{\rho}}v_j\,\dx=\int_{\Omega}v_j(x)\,\dx\;\;\;\forall j\geq 1.
\end{equation}
From the density of $(v_j)_{j\geq1}$ in $L^1(\Omega)$ we can deduce from \eqref{dens} that
\begin{equation}\label{k}
\dis \lim_{k\to \infty}\int_{\Omega}\frac{1}{\rho}\chi_{E^k_{\rho}}v\,\dx=\int_{\Omega}v(x)\,\dx\;\;\;\forall v\in L^1(\Omega).
\end{equation}
This means that $\dis \frac{1}{\rho}\chi_{E^k_{\rho}}\stackrel{\star}\rightharpoonup 1  \text{  in }  L^\infty(\Omega)$,  as $k\to \infty$. Since $h\in L^{\tilde{p}}(\Omega)$, it holds that $\left(1-\frac{1}{\rho}\chi_{E^k_{\rho}}\right)h\rightharpoonup 0 \text{ weakly in }  L^{\tilde{p}}(\Omega)$, as $k\to\infty$.  Since the embedding $\V\hookrightarrow L^p(\Omega)$ is compact for every $p\in [1,\frac{2N}{N-2s})$, we have the compact embedding $ L^{p'}(\Omega)\hookrightarrow\V^\star$ for any $p'>\frac{N}{N+2s}$.  If $s\in(\frac{1}{2},1)$,  $\frac{N}{2s-1}>\frac{N}{N+2s}$, then the embedding $ L^{p'}(\Omega)\hookrightarrow\V^*$ is compact for every $p'>\frac{N}{2s-1}$. Consequently, $L^{\tilde{p}}(\Omega)$ is compactly embedded in $\V^*$. Thus,  we can select $E_{\rho}=E^{k}_{\rho}$ with some sufficiently large $k$, such that
\begin{equation}\label{convimp}
\left\|\left(1-\frac{1}{\rho}\chi_{E_{\rho}}\right)h\right\|_{\V^\star}<\rho.
\end{equation}
We also have from \eqref{k} that,
\begin{equation}\label{k0}
\dis \lim_{\rho\to \infty}\int_{\Omega}\frac{1}{\rho}\chi_{E_{\rho}}v\,\dx=\int_{\Omega}v(x)\,\dx\;\;\;\forall v\in L^1(\Omega).
\end{equation}
Now, we define 
\begin{equation}\label{defuro}
u_{\rho}:=\left\{\begin{array}{lll}
v(x)& &\text{ if } x\in E_{\rho},\\
u(x)&& \text{ otherwise }.
\end{array}
\right.
\end{equation}
Then, $u_{\rho}\in \mathcal{U}$. Let us denote by $y_{\rho}$ the state associated to $u_{\rho}$. We also set $z_\rho:=\frac{1}{\rho}(y_{\rho}-y)$, where $y$ is the weak solution of \eqref{model} associated to $u$. Then,  $z_{\rho}$ is the weak solution of
\begin{equation}\label{p1}
\left.
\begin{array}{lllll}
(-\Delta_D)^sz_{\rho}&=&\dis \frac{\partial F}{\partial y}(x,y+\theta_{\rho}(y_{\rho}-y),u_{\rho})z_{\rho}+ \frac{1}{\rho}\chi_{E_{\rho}}h&\hbox{in} & \Omega,
\end{array}
\right.
\end{equation}	
where $\theta_{\rho}:\Omega\to[0,1]$ is a measurable function.  Next, we consider the problem
\begin{equation}\label{p2}
\left.
\begin{array}{lllll}
(-\Delta_D)^sz&=&\dis \frac{\partial F}{\partial y}(x,y,u)z+[F(x,y,v)-F(x,y,u)]&\hbox{in} & \Omega.
\end{array}
\right.
\end{equation}	
We claim that $z_{\rho}$ weak solution of \eqref{p1} converges strongly in $\V$ to $z$ weak solution of \eqref{p2}, as $\rho\to 0$. 
First, we note that under Assumption \ref{ass1}, $z \in \V\cap \mathcal{C}^{0,\sigma}(\bar{\Omega})$. Next, we write for any $r\geq1$,
\begin{equation}\label{ine}
	\|u_{\rho}-u\|_{L^r(\Omega)}=\|u_{\rho}-u\|_{L^r(E_{\rho})}\leq 2\max(|\alpha|,|\beta|)|E_{\rho}|^{\frac{1}{r}}\leq 2\max(|\alpha|,|\beta|)(\rho|\Omega|)^{\frac{1}{r}}.
\end{equation}
Therefore,  as $\rho\to 0$, 
\begin{equation}\label{c0}
u_{\rho}\to u\;\; \text{strongly in}\;\;\; L^r(\Omega)
\end{equation}
for every $r\geq 1$. In particular,  for $r={\tilde{p}}$, we deduce from Proposition \ref{convergence} that,  as $\rho\to 0$,
\begin{equation}\label{c1}
 y_{\rho}\to y\;\; \text{strongly in}\;\;\; \V\cap \mathcal{C}^{0,\sigma}(\bar{\Omega}).
\end{equation}
On the other hand, $z_{\rho}-z$ is the weak solution of
\begin{align}\label{p3}
(-\Delta_D)^s(z_{\rho}-z)=&\dis \frac{\partial F}{\partial y}(x,y+\theta_{\rho}(y_{\rho}-y),u_{\rho})(z_{\rho}-z)+ \left(\frac{1}{\rho}\chi_{E_{\rho}}-1\right)h\notag\\
&\dis + \left[\frac{\partial F}{\partial y}(x,y+\theta_{\rho}(y_{\rho}-y),u_{\rho})-\frac{\partial F}{\partial y}(x,y,u)\right]z\;\qquad\hbox{ in } \; \Omega.
\end{align}	
Set $M:=2\|y\|_{\mathcal{C}(\bar{\Omega})}$. If we multiply  \eqref{p3} by $z_{\rho}-z$ and integrate over $\Omega$, we get
\begin{equation}\label{p4}
\left.
\begin{array}{lllll}
\|z_{\rho}-z\|^2_{\V}&=&\dis\int_{\Omega} \frac{\partial F}{\partial y}(x,y+\theta_{\rho}(y_{\rho}-y),u_{\rho})(z_{\rho}-z)^2\,\dx+ \int_{\Omega}\left(\frac{1}{\rho}\chi_{E_{\rho}}-1\right)h(z_{\rho}-z)\,\dx\\
&&\dis + \int_{\Omega} \left[\frac{\partial F}{\partial y}(x,y+\theta_{\rho}(y_{\rho}-y),u_{\rho})-\frac{\partial F}{\partial y}(x,y,u)\right]z(z_{\rho}-z)\,\dx.
\end{array}
\right.
\end{equation}	
Using \eqref{as1}, we arrive to 
\begin{equation}\label{p5}
\left.
\begin{array}{lllll}
\|z_{\rho}-z\|^2_{\V}&\leq &\dis \|z\|_{\mathcal{C}(\bar{\Omega})}\left\|\frac{\partial F}{\partial y}(x,y+\theta_{\rho}(y_{\rho}-y),u_{\rho})-\frac{\partial F}{\partial y}(x,y,u)\right\|_{L^2(\Omega)}\|z_{\rho}-z\|_{L^2(\Omega)}\\ &&+ \dis\left\|\left(\frac{1}{\rho}\chi_{E_{\rho}}-1\right)h\right\|_{\V^\star}\|z_{\rho}-z\|_{\V}.
\end{array}
\right.
\end{equation}	
Using the Mean Value Theorem and \eqref{as2} we obtain  after some calculations
\begin{equation}\label{estF}
\left\|\frac{\partial F}{\partial y}(x,y+\theta_{\rho}(y_{\rho}-y),u_{\rho})-\frac{\partial F}{\partial y}(x,y,u)\right\|_{L^2(\Omega)}\leq C_{F,M}\left[\|y_{\rho}-y\|_{L^2(\Omega)}+\|u_{\rho}-u\|_{L^2(\Omega)}\right].
\end{equation}

Combining \eqref{est5}-\eqref{estF}, we deduce from \eqref{p5} that
 \begin{equation}\label{p6}
 \left.
 \begin{array}{lllll}
 \|z_{\rho}-z\|^2_{\V}&\leq &\dis C_{F,M}C(\Omega,N,s)\|z\|_{\mathcal{C}(\bar{\Omega})}\left[\|y_{\rho}-y\|_{\V}+\|u_{\rho}-u\|_{L^2(\Omega)}\right]\|z_{\rho}-z\|_{\V}\\ &&+ \dis\left\|\left(\frac{1}{\rho}\chi_{E_{\rho}}-1\right)h\right\|_{\V^\star}\|z_{\rho}-z\|_{\V}.
 \end{array}
 \right.
 \end{equation}	
 Hence, 
  \begin{equation}\label{p7}
 \left.
 \begin{array}{lllll}
 \|z_{\rho}-z\|_{\V}&\leq &\dis C_{F,M}C(\Omega,N,s)\|z\|_{\mathcal{C}(\bar{\Omega})}\left[\|y_{\rho}-y\|_{\V}+\|u_{\rho}-u\|_{L^2(\Omega)}\right]\\ &&+ \dis\left\|\left(\frac{1}{\rho}\chi_{E_{\rho}}-1\right)h\right\|_{\V^\star}.
 \end{array}
 \right.
 \end{equation}	
 Taking the limit in \eqref{p7},  as $\rho\to 0$, while using \eqref{c0}, \eqref{c1} and \eqref{convimp}, we obtain the claim.
 
 Note also that \eqref{ine} implies that $u_{\rho}\in B_{\varepsilon}^p(u)$ for every $\rho$ sufficiently small. Therefore, from the optimality of $u$, we get
 \begin{equation}\label{limj}
\dis 0\leq \frac{J(u_{\rho})-J(u)}{\rho}=	\int_{\Omega} \frac{\partial F}{\partial y}(x,y+\theta_{\rho}(y_{\rho}-y),u_{\rho})z_{\rho}\,\dq+ \int_{\Omega} \frac{1}{\rho}\chi_{E_{\rho}}[L(x,y,v)-L(x,y,u)]\,\dx.
 \end{equation}
Using the Mean Value Theorem and \eqref{as7}, we obtain that
\begin{equation}\label{estL}
\left\|\frac{\partial L}{\partial y}(x,y+\theta_{\rho}(y_{\rho}-y),u_{\rho})-\frac{\partial L}{\partial y}(x,y,u)\right\|_{L^2(\Omega)}\leq C_{L,M}\left[\|y_{\rho}-y\|_{L^2(\Omega)}+\|u_{\rho}-u\|_{L^2(\Omega)}\right].
\end{equation}
In addition, from \eqref{as6}, we have that $L(\cdot,y,v)-L(\cdot,y,u)\in L^1(\Omega)$. Therefore,  passing to the limit,  as $\rho\to 0$,  in \eqref{limj}, while using \eqref{k0}, \eqref{c0} and \eqref{c1}, leads us to
 \begin{equation*}
\dis 0\leq \int_{\Omega} \frac{\partial F}{\partial y}(x,y,u)z\,\dq+ \int_{\Omega}[L(x,y,v)-L(x,y,u)]\,\dx,
\end{equation*}
which by \eqref{adjoint} is equivalent to 
\begin{equation*}
\dis 0\leq \int_{\Omega} \left[(-\Delta_D)^sq- \frac{\partial F}{\partial y}(x,y,u)q\right]z\,\dq+ \int_{\Omega}[L(x,y,v)-L(x,y,u)]\,\dx.
\end{equation*}
Integrating this latter inequality by parts and using \eqref{p2}, we arrive to 
\begin{equation*}
\dis 0\leq \int_{\Omega} \left[F(x,y,v)-F(x,y,u)\right]\,\dx+ \int_{\Omega}[L(x,y,v)-L(x,y,u)]\,\dx.
\end{equation*}
Hence, 
$$\dis \int_{\Omega} H(x,y(x),q(x),u(x))\,\dx\leq \int_{\Omega} H(x,y(x),q(x),v(x))\,\dx.$$
Since $v\in \mathcal{U}$ is arbitrary, we can deduce that \eqref{min} holds and the proof is finished.
	\end{proof}
	
Now, we can state and prove the pointwise Pontryagin type principle.

\begin{proposition}\label{pr1}
Under the hypothesis of Theorem \ref{theoSO} with $1\le p<\infty$,  the following pointwise Pontryagin principle holds:
	\begin{equation}\label{pont}
	\dis 	H(x,y(x),q(x),u(x))=\min_{t\in [\alpha,\beta]} H(x,y(x),q(x),t)\;\;\; \text{for a.e.}\; x\in \Omega,
	\end{equation}
	where $y$ and $q$ are the state and adjoint state associated with $u$, respectively.
\end{proposition}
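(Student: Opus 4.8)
The strategy is the classical Lebesgue-point / bulk-localization argument that converts the integrated minimum principle \eqref{min} into its pointwise version \eqref{pont}. The integrated inequality \eqref{min} says that $u$ minimizes $v\mapsto\int_\Omega H(x,y(x),q(x),v(x))\,\dx$ over all admissible $v\in\U$; I want to deduce that $u(x)$ minimizes $t\mapsto H(x,y(x),q(x),t)$ over $[\alpha,\beta]$ for almost every $x$. The proof proceeds by contradiction: suppose the pointwise relation fails on a set of positive measure.

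\textbf{Step 1: Set-up of the contradiction.} Define, for $x\in\Omega$, the function $\Phi(x):=\min_{t\in[\alpha,\beta]}H(x,y(x),q(x),t)$. Since $y\in\mathcal C^{0,\sigma}(\bar\Omega)$, $q\in\mathcal C^{0,\sigma}(\bar\Omega)$ (both continuous and bounded by Theorems \ref{theoexistence}, \ref{theoregularity} and the adjoint existence proposition) and $H$ is continuous and $\mathcal C^2$ in its last argument, a measurable-selection argument (Filippov-type, as in \cite{aubin2009}) produces a measurable function $x\mapsto \widetilde u(x)\in[\alpha,\beta]$ with $H(x,y(x),q(x),\widetilde u(x))=\Phi(x)$ for a.e.\ $x$; in particular $\widetilde u\in\U$. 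Clearly $\Phi(x)\le H(x,y(x),q(x),u(x))$ a.e. If the pointwise principle \eqref{pont} were false, the set $A:=\{x\in\Omega:\,H(x,y(x),q(x),u(x))>\Phi(x)\}$ would have positive Lebesgue measure.

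\textbf{Step 2: Localized perturbation.} For a measurable subset $E\subseteq A$ define the competitor $v_E:=\widetilde u$ on $E$ and $v_E:=u$ on $\Omega\setminus E$; then $v_E\in\U$, and by \eqref{min},
\begin{equation*}
0\le \int_\Omega\big[H(x,y,q,v_E)-H(x,y,q,u)\big]\,\dx=\int_{E}\big[\Phi(x)-H(x,y(x),q(x),u(x))\big]\,\dx.
\end{equation*}
But on $E\subseteq A$ the integrand is strictly negative, so the right-hand side is $<0$ whenever $|E|>0$, a contradiction. Hence $|A|=0$, which is exactly \eqref{pont}.

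\textbf{Main obstacle.} The only genuinely delicate point is the measurable selection in Step 1: one must check that the multifunction $x\mapsto\{t\in[\alpha,\beta]:H(x,y(x),q(x),t)=\Phi(x)\}$ is measurable with closed (indeed compact) nonempty values, so that \cite[Theorem 8.1.3]{aubin2009} applies. This follows because $(x,t)\mapsto H(x,y(x),q(x),t)$ is a Carath\'eodory function (measurable in $x$, continuous in $t$, using the continuity of $y$ and $q$ and Assumptions \ref{ass1}--\ref{ass2}), so $\Phi$ is measurable and the argmin set is closed; compactness of $[\alpha,\beta]$ gives nonemptiness. This is entirely analogous to the construction of $\bar u$ via $\mathcal H_1,\mathcal H_2$ in the proof of Theorem \ref{existcontrol1}, so I would simply invoke that same mechanism. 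Everything else is the routine bulk-localization contradiction above; note that, unlike the bang-bang/Lyapunov construction used to prove \eqref{min}, here no convexity or Lyapunov theorem is needed since we test against a single fixed competitor $v_E$.
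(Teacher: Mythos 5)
Your argument is correct, but it follows a genuinely different route from the paper's. The paper proves \eqref{pont} via a Lebesgue-point argument: it fixes the countable dense family of constant controls $v_j=\alpha\alpha_j+(1-\alpha_j)\beta$ with $\alpha_j\in[0,1]\cap\mathbb Q$, introduces the integrable functions $F_0(x)=H(x,y(x),q(x),u(x))$ and $F_j(x)=H(x,y(x),q(x),v_j(x))$, pastes $v_j$ onto $u$ over a small ball $B_r(x_0)$ about a common Lebesgue point $x_0$ of all the $F_j$, applies the integrated principle \eqref{min} to the pasted control, and then sends $r\downarrow 0$; density of $\{v_j(x_0)\}_j$ in $[\alpha,\beta]$ together with continuity of $H$ in its last argument yields the pointwise inequality for every $t\in[\alpha,\beta]$. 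You instead build, once and for all, a measurable argmin selection $\widetilde u$ of $t\mapsto H(x,y(x),q(x),t)$ over $[\alpha,\beta]$, and then paste $\widetilde u$ onto $u$ over a positive-measure subset $E$ of the ``bad'' set $A$, obtaining a strictly negative right-hand side in \eqref{min}, a contradiction. Both approaches take \eqref{min} as the sole optimality input, and the technical tool differs: the paper uses the Lebesgue differentiation theorem applied to a countable family of integrands, whereas you use the measurable selection theorem of \cite[Theorem~8.1.3]{aubin2009}, which the paper already invokes in the proof of Theorem~\ref{existcontrol1}, so no new machinery is introduced. Your route is slightly shorter and sidesteps Lebesgue points entirely; the paper's route sidesteps measurable selection and needs no explicit construction of a pointwise minimizer. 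Your identification of the measurable-selection step as the only delicate point is accurate, and your Carath\'eodory-regularity justification for it (continuity and boundedness of $y,q$ on $\bar\Omega$, plus Assumptions~\ref{ass1}--\ref{ass2}) is the right one.
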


\begin{proof}
Let $(\alpha_j)_{j\geq 0}$ be a sequence of rational numbers contained in $[0,1]$. For every $j$, we set $v_j:=\alpha \alpha_j+(1-\alpha_j)\beta$. Then,  every function $v_j$ belongs to $L^{\infty}(\Omega)$ and $v_j\in [\alpha,\beta]$. Hence, $v_j\in \mathcal{U}$. Now,  let us introduce the functions $F_0,F_j:\Omega\to \R$ given by
$$F_0(x)=H(x,y(x),q(x),u(x))\;\;\;\;\text{and}\;\;\;\; F_j(x)=H(x,y(x),q(x),v_j(x)), \;\;\;j\in\N.$$
We introduce the set of Lebesgue regular points $E_0$ and $(E_j)_{j\geq 1}$, which are known to satisfy $|E_j|=|\Omega|$ for $j\in\N\cup\{0\}$ and 
\begin{equation}\label{im}
\lim_{r\to 0}\frac{1}{|B_r(x_0)|}\int_{B_r(x_0)}F_j(x)\,\dx=F_j(x_0),\;\;\forall x_0\in E_j,\;\;j\in\N\cup\{0\},
\end{equation}
where $B_r(x_0)$ is the Euclidean ball in $\R^N$ of center $x_0$ and radius $r$. By setting $E:=\bigcap_{j=0}^{\infty}E_j$,  we have that $|E|=|\Omega|$ and \eqref{im} remains true for all $x_0\in E$. Let $x_0\in E$, $r>0$ and define 
\begin{equation}\label{def}
u_{j,r}:=\left\{\begin{array}{lll}
u(x)& \text{if}& x\notin B_r(x_0),\\
v_j(x)& \text{if}& x\in B_r(x_0),\;\;j\in\N\cup\{0\}.
\end{array}
\right.
\end{equation}
Then,  $u_{j,r}\in \mathcal{U}$. Hence, from \eqref{min}, we can deduce that 
$$\dis \int_{\Omega} H(x,y(x),q(x),u(x))\,\dx\leq \int_{\Omega} H(x,y(x),q(x),u_{j,r}(x))\,\dx,\;\;\forall\;j\in\N\cup\{0\}.$$
Using \eqref{def}, the preceding estimate implies that for all $j\in \N\cup\{0\}$, 
 $$\dis \frac{1}{|B_r(x_0)|}\int_{B_r(x_0)} H(x,y(x),q(x),u(x))\,\dx\leq \frac{1}{|B_r(x_0)|}\int_{B_r(x_0)}H(x,y(x),q(x),v_j(x))\,\dx.$$
 Taking the limit of the latter inequality,  as $r\to 0$, while using \eqref{im}, we arrive to
 $$\dis H(x_0,y(x_0),q(x_0),u(x_0))\leq H(x_0,y(x_0),q(x_0),v_j(x_0)),\;\;\forall\; j\in \N\cup\{0\}.$$
From the continuity of the functions $L$ and $F$ with respect to last component, we obtain that $H$ is also continuous with respect to the last component. Since $(v_j(x_0))_{j\geq1}$ is dense in $[\alpha,\beta]$, we get
\begin{equation}\label{MJ}
\dis H(x_0,y(x_0),q(x_0),u(x_0))\leq H(x_0,y(x_0),q(x_0),t),\;\;\;\forall t\in [\alpha,\beta].
\end{equation}
Since $x_0$ is arbitrary, we can deduce \eqref{pont} from \eqref{MJ}. The proof is finished.
\end{proof}

	
	\begin{lemma}\label{cont1} Let $s>\frac{1}{2}$, $\tilde p>\frac{N}{2s-1}$ and $u\in L^\infty(\Omega)$.  Let Assumptions \ref{ass1} and \ref{ass2} hold. If $\Omega$ is of class $\mathcal{C}^{1,\sigma}$ for $0<\sigma:=2s-\frac{N}{{\tilde{p}}}-1<1$, then the linear mapping $v\mapsto J'(u)v$ can be extended to a linear continuous mapping $J'(u):L^2(\Omega)\to \R$ given by \eqref{diff5}. Moreover,  the bilinear  mapping $(v,w)\mapsto {J}''(u)[v,w]$ can be extended to a bilinear continuous mapping ${J}''(u):L^2(\Omega)\times L^2(\Omega)\to \R$ given by \eqref{diff6}. In addition,  there is a constant $C=C(N,s,\Omega,\alpha,\beta)>0$ such that for all $v, w\in L^2(\Omega)$,
		\begin{equation}\label{estJp}
			|J'(u)v|\leq C\|v\|_{L^2(\Omega)}
		\end{equation}
		and 
		\begin{equation}\label{estJs}
		|J''(u)[v,w]|\leq C\|v\|_{L^2(\Omega)}\|w\|_{L^2(\Omega)}.
		\end{equation}
	\end{lemma}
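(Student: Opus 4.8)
The plan is to read off both claims directly from the explicit integral representations \eqref{diff5} and \eqref{diff6} of $J'(u)$ and $J''(u)$, using three facts already available: the uniform bounds \eqref{as2} and \eqref{as7} on the first and second derivatives of $F$ and $L$ (applicable since $|y(x)|\le M:=\|y\|_{\mathcal C(\bar\Omega)}<\infty$ and $u(x)\in[\alpha,\beta]$), the boundedness $q\in\mathcal C^{0,\sigma}(\bar\Omega)\hookrightarrow L^\infty(\Omega)$ with $\|q\|_{L^\infty(\Omega)}\le C(N,s,\Omega)C_{L,M}$ coming from \eqref{estadjoint}, and the continuity estimate \eqref{est8}, which already asserts that $v\mapsto G'(u)v$ extends to a bounded operator $L^2(\Omega)\to\V$. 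I would first establish the bounds \eqref{estJp} and \eqref{estJs} for $v,w\in L^\infty(\Omega)$ straight from the formulas, and then extend $J'(u)$ and $J''(u)$ to $L^2(\Omega)$ by density of $L^\infty(\Omega)$ in $L^2(\Omega)$; the extended maps are given by the same formulas \eqref{diff5}, \eqref{diff6}, now with $G'(u)$ interpreted through its $L^2\to\V$ extension.

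For \eqref{estJp}: in \eqref{diff5} I would bound $|\partial_u L(x,y,u)|\le C_{L,M}$ by \eqref{as7} and $|\partial_u F(x,y,u)|\le C_{F,M}$ by \eqref{as2}, so the coefficient of $v$ lies in $L^\infty(\Omega)$ with norm at most $C_{L,M}+C_{F,M}\|q\|_{L^\infty(\Omega)}$; Cauchy--Schwarz together with $|\Omega|<\infty$ then gives $|J'(u)v|\le|\Omega|^{1/2}\big(C_{L,M}+C_{F,M}\|q\|_{L^\infty(\Omega)}\big)\|v\|_{L^2(\Omega)}$, which is \eqref{estJp} after inserting \eqref{estadjoint}.

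For \eqref{estJs}: I would estimate the five groups of terms in \eqref{diff6} one at a time. Each second derivative of $L$ is bounded by $C_{L,M}$ (by \eqref{as7}), each second derivative of $F$ by $C_{F,M}$ (by \eqref{as2}), and the factor $q$, where it appears, by $\|q\|_{L^\infty(\Omega)}$. The remaining products are handled by Cauchy--Schwarz, $\int_\Omega|G'(u)v\,G'(u)w|\,\dx\le\|G'(u)v\|_{L^2(\Omega)}\|G'(u)w\|_{L^2(\Omega)}$, $\int_\Omega|w\,G'(u)v|\,\dx\le\|w\|_{L^2(\Omega)}\|G'(u)v\|_{L^2(\Omega)}$, $\int_\Omega|vw|\,\dx\le\|v\|_{L^2(\Omega)}\|w\|_{L^2(\Omega)}$, and then the embedding $\V\hookrightarrow L^2(\Omega)$ together with \eqref{est8} turns $\|G'(u)v\|_{L^2(\Omega)}$ into $\le C_{F,M}C(N,s,\Omega)^2\|v\|_{L^2(\Omega)}$ (and similarly for $w$). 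Summing the five bounds yields $|J''(u)[v,w]|\le C\|v\|_{L^2(\Omega)}\|w\|_{L^2(\Omega)}$ with $C=C(N,s,\Omega,\alpha,\beta)$, which is \eqref{estJs}.

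I do not expect a genuine obstacle here; the two points that need a little care are that every bilinear product in \eqref{diff6} can be closed using \emph{only} $\V\hookrightarrow L^2(\Omega)$ and \eqref{est8} --- no higher integrability of $G'(u)v$ is required, precisely because $q$ and all second derivatives of $L$ and $F$ sit in $L^\infty(\Omega)$ --- and that the density extension is legitimate: the $L^2$-continuous functionals produced from \eqref{diff5} and \eqref{diff6} coincide on $L^\infty(\Omega)$ with $J'(u)$ and $J''(u)$ as defined in Proposition \ref{diff4}, hence the extension is unique and inherits the representation formulas, and the bilinearity of the extended $J''(u)$ is immediate once $G'(u)$ is known to be linear on $L^2(\Omega)$.
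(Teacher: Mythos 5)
Your proof is correct and follows essentially the same route as the paper: bound the coefficients appearing in \eqref{diff5} and \eqref{diff6} in $L^\infty(\Omega)$ via \eqref{as2}, \eqref{as7}, and \eqref{estadjoint}, use \eqref{est8} together with $\V\hookrightarrow L^2(\Omega)$ to control $G'(u)v$, and close each term by Cauchy--Schwarz. The paper simply plugs $v,w\in L^2(\Omega)$ directly into the representation formulas (which is licit because the coefficients and $G'(u)$ already make sense there), whereas you arrive at the same conclusion through an explicit density extension; the estimates and the conclusion are identical.
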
	
	
	
	\begin{proof} Let $u\in L^\infty(\Omega)$ and $v\in L^2(\Omega)$. From \eqref{diff5}, we have that
		$$
	J'(u)v=\int_{\Omega}\left( \frac{\partial L}{\partial u}(x,y,u)+ \frac{\partial F}{\partial u}(x,y,u)q\right)v\,\dx, 
		$$
		where  $q$ is the weak solution of \eqref{adjoint}.
		Using \eqref{est1}, \eqref{as2}, \eqref{as7} and \eqref{estadjoint},  we have that there is a constant $C>0$ independent of $v$ such that \eqref{estJp} holds.
		Thus, the mapping $v\mapsto J'(u)v$ is linear and continuous on $L^2(\Omega)$. Next,
	let $v,w\in L^2(\Omega)$. From \eqref{diff6}, we have that
			\begin{align}\label{ww}
			J''(u)[v,w] =&\!\dis \int_{\Omega} \frac{\partial^2 L}{\partial y^2}(x,y,u)G'(u)vG'(u)w\,\dx + \int_{\Omega} \frac{\partial^2 L}{\partial y\partial u}(x,y,u)(wG'(u)v+vG'(u)w)\,\dx\notag\\
			 &\dis +\int_{\Omega} \frac{\partial^2 L}{\partial u^2}(x,y,u)vw\,\dx+ \int_{\Omega}   \frac{\partial^2 F}{\partial y\partial u}(x,y,u)(wG'(u)v+vG'(u)w)q\,\dx\notag\\
			&+\dis \int_{\Omega}\left(\frac{\partial^2 F}{\partial y^2}(x,y,u)G'(u)vG'(u)w+\frac{\partial^2 F}{\partial u^2}(x,y,u)vw\right)q\,\dx.
			\end{align}
			Using Cauchy Schwarz's inequality, \eqref{as2}, \eqref{as7}, \eqref{est8} and \eqref{estadjoint}, we get	 from \eqref{ww} that there is a constant $C>0$ such that \eqref{estJs} holds.
			Hence, the mapping $(v,w)\mapsto {J}''(u)[v,w]$ is a bilinear continuous mapping on $L^2(\Omega)\times L^2(\Omega)$. This completes the proof.
	\end{proof}

	\section{Second order optimality conditions}\label{second}

The main concern of this section is to derive the second order necessary and sufficient conditions of optimalty for the control problem \eqref{opt}-\eqref{model}.

	\subsection{Second order necessary optimality conditions}\label{sufficientoptcond}
	
Since the cost functional $J$ associated to the optimization problem \eqref{opt}-\eqref{model} is non-convex, we have that  the first order optimality conditions given in Theorem \ref{theoSO} are necessary but not sufficient for optimality.  
	
We first introduce the cone of critical directions associated to a control $u\in\mathcal U$ defined by
	\begin{equation}\label{ccone}
		\mathcal C_{u}:= \Big\{v\in L^2(\Omega) : J'(u)v=0 \;\text{ and  $v$ fulfills}\; \eqref{eq1}\Big\},
	\end{equation}
	that is, for a.e $x\in \Omega$,  
	\begin{equation}\label{eq1}
		\left\{
		\begin{array}{lllll}
			\dis  v(x)\geq 0 &\mbox{ if }&  u(x)=\alpha ,\\
			\dis v(x)\leq 0&\mbox{ if} & u(x)=\beta,\\
			\dis 0 &\mbox{ if } &d(x)\neq 0,
		\end{array}
		\right.
	\end{equation}
	where 
	$$d(x):=\frac{\partial L}{\partial u}(x,y,u)+ \frac{\partial F}{\partial u}(x,y,u)q.$$
%
	In the rest of the paper, we will adopt the notation $J''(u)v^2:=J''(u)[v,v]$.
	
	\begin{theorem}[\bf Second order necessary optimality conditions]\label{theo-514}
		Let $u\in \mathcal{U}$ be an $L^p$-local solution of \eqref{opt} with $1\leq p\leq \infty$. Then,  $J''(u)v^2\geq 0$ for all $v\in \mathcal C_u$.
	\end{theorem}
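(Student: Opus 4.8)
The plan is to follow the classical variational argument for second-order necessary conditions, adapted to the $L^p$-local framework used here. Fix $v\in\mathcal C_u$. The goal is to produce, for each small parameter, an admissible perturbation of $u$ in the direction $v$ (or an approximation of it), evaluate $J$ along it using the Taylor expansion of $J$ furnished by Proposition \ref{diff4}, and pass to the limit. The subtlety specific to control-constrained problems is that $u+tv$ need not be admissible, so one cannot simply plug it in; instead one must first approximate $v$ by directions that are ``strictly feasible'' away from the active sets and then use a feasible-correction/projection argument.

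The steps I would carry out, in order, are the following. First, I would establish a density/approximation lemma: the set of $v\in\mathcal C_u$ that in addition vanish on a neighborhood (in measure) of $\{u=\alpha\}\cup\{u=\beta\}$ and satisfy $d(x)v(x)=0$ is dense in $\mathcal C_u$ for the $L^2(\Omega)$-norm, and moreover $J''(u)$ is $L^2$-continuous by \eqref{estJs}, so it suffices to prove $J''(u)v^2\ge 0$ for such ``nice'' $v$. Second, for such a $v$, I would choose $t>0$ small and set $u_t:=u+tv$; by the structure of $v$ on the active sets, $u_t\in\mathcal U$ for $t$ small (here is where the vanishing-near-the-active-set property is used), and $\|u_t-u\|_{L^p(\Omega)}=t\|v\|_{L^p(\Omega)}\to 0$, so $u_t$ lies in the $L^p$-ball of local optimality. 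Third, I would write the second-order Taylor expansion
\begin{equation*}
0\le J(u_t)-J(u)=tJ'(u)v+\frac{t^2}{2}J''(u)v^2+o(t^2),
\end{equation*}
valid since $J$ is of class $\mathcal C^2$ on $L^\infty(\Omega)$ by Proposition \ref{diff4}, with the remainder controlled uniformly using the local $L^\infty$-bound on $u_t$ and the continuity assumptions \eqref{as3}, \eqref{as8}. Since $v\in\mathcal C_u$ forces $J'(u)v=0$, dividing by $t^2/2$ and letting $t\downarrow0$ yields $J''(u)v^2\ge0$. Fourth, I would remove the extra restrictions on $v$ by the density argument from the first step, using \eqref{estJs} to pass the inequality to the limit.

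The main obstacle is the first step: proving that critical directions which vanish near the active sets are $L^2$-dense in $\mathcal C_u$. This is where the precise definition of $\mathcal C_u$ via \eqref{eq1} matters, and one typically argues by truncating $v$ on the sets $\{0<|d(x)|\}$ and on thin layers around $\{u=\alpha\}$ and $\{u=\beta\}$, checking that the truncated directions still satisfy $J'(u)v=0$ — this uses that $J'(u)v=\int_\Omega d(x)v(x)\,\dx$ together with the sign conditions in \eqref{eq1}, which guarantee $d(x)v(x)\ge 0$ a.e., so that $J'(u)v=0$ actually forces $d\,v=0$ a.e.; hence truncation near the active sets does not destroy the equality $J'(u)v=0$. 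A secondary technical point is ensuring the Taylor remainder is genuinely $o(t^2)$ uniformly: this follows from the uniform continuity assumptions \eqref{as3} and \eqref{as8} on the second derivatives of $F$ and $L$ combined with the strong convergence $y(u_t)\to y(u)$ in $\mathcal C^{0,\sigma}(\bar\Omega)$ from Proposition \ref{convergence}, together with the $L^2$-continuity estimates \eqref{est8} and \eqref{estJs} for $G'$ and $J''$.
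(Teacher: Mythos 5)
Your overall strategy --- approximate $v\in\mathcal C_u$ by ``nice'' feasible directions, apply the second-order Taylor expansion of $J$, and pass to the limit via the $L^2$-continuity of $J''(u)$ from Lemma \ref{cont1} --- is exactly the paper's. There is, however, a gap in your Step 2: for a general $v\in\mathcal C_u\subset L^2(\Omega)$, even one that vanishes near the active sets, the perturbed control $u_t:=u+tv$ need not lie in $\mathcal U$ for \emph{any} $t>0$, because $v$ may be unbounded while $\mathcal U\subset L^\infty(\Omega)$. Vanishing near $\{u=\alpha\}\cup\{u=\beta\}$ does not control $v$ on the interior set $\{\alpha+\delta\le u\le\beta-\delta\}$, where $v$ can still be unbounded, so your claim ``$u_t\in\mathcal U$ for $t$ small'' fails there.

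The paper remedies this by defining $v_n:=\Pi_{[-n,n]}(v)$ away from the almost-active layers $\{u\in(\alpha,\alpha+1/n)\cup(\beta-1/n,\beta)\}$ and $v_n:=0$ on those layers. The truncation $\Pi_{[-n,n]}$ makes $v_n\in L^\infty(\Omega)$, and together with the layering this gives $u+\theta v_n\in\mathcal U$ for $\theta$ small (depending on $n$). Note that the paper does \emph{not} force $v_n$ to vanish on the truly active sets: on $\{u=\alpha\}\cup\{u=\beta\}$ the sign conditions of \eqref{eq1} plus boundedness already keep $u+\theta v_n$ in $[\alpha,\beta]$. Once you add the $L^\infty$-bound to your approximating class (and observe that $v_n$ still vanishes where $d\ne 0$, so $J'(u)v_n=0$ and $v_n\in\mathcal C_u$), the remaining steps --- dominated convergence gives $v_n\to v$ in $L^2(\Omega)$, and \eqref{estJs} carries $J''(u)v_n^2\ge0$ to the limit --- coincide with the paper's proof of Theorem \ref{theo-514}.
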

	
	\begin{proof}
		The proof follows as in \cite[Theorem 5.16]{KMW2022}  (see also \cite[pp. 246]{fredi2010} for the local case).  Indeed,  Let $v\in \mathcal C_u$. We define for every $n\in \N$ and for almost every $x\in \Omega$, the function
		\begin{equation*}
		v_n(x):=
		\begin{cases}
		0\;\;&\mbox{ if }\; u(x)\in \dis \left(\alpha,\alpha+\frac{1}{n}\right)\bigcup \left(\beta-\frac{1}{n},\beta\right),\\
		\dis  \Pi_{[-n,n]}(v(x)) &\text{ otherwise }.
		\end{cases}
		\end{equation*}
		Then, $\bar{u}:=u+\theta v_n\in \U$ for $\theta\in (0,1)$. Moreover,  using that $u$ is a local optimal control, we deduce that 
		\begin{equation}\label{JJ}
			0\leq \frac{{J}(\bar{u})-{J}(u)}{\theta}={J}'(u)v_n+\frac{1}{2}\theta{J}''(u)v_n^2+\theta^{-1}r^2(u,\theta v_n),
		\end{equation}
		where $r^2(u,\theta v_n)$ represents the second-order remainder. Since $v\in \mathcal C_u$, it follows that $v_n\in \mathcal C_u$.  Thus,  $\dis {J}'(u)v_n=0$. Therefore,
		dividing both sides of \eqref{JJ} by $\theta$, we get
				\begin{equation}\label{Jj}
			0\leq \frac{1}{2}{J}''(u)v_n^2+\theta ^{-2}r^2(u,\theta v_n).
		\end{equation}
		Taking the limit in \eqref{Jj},  as $\theta\to 0$,  yields
		\begin{equation}\label{1-1}
		{J}''(u)v_n^2\geq 0.
		\end{equation} 
		It remains to prove that,  as $n\to \infty$, $v_n\to v$ in $L^2(\Omega)$. First,  we note that for a.e $x\in \Omega$, $v_n(x)\to v(x)$,  as $n\to \infty$. In addition,  $|v_n(x)|^2\leq |v(x)|^2$ for a.e. $x\in\Omega$ and  for all $n\in \N$.  Using the Lebesgue dominated convergence theorem, we can deduce that,  as $n\to \infty$, $v_n\to v$ in $L^2(\Omega)$. Hence,  taking the limit,  as $n\to \infty$,  in \eqref{1-1} and using the continuity of $v\mapsto {J}''(u)v^2$ in $L^2(\Omega)$, we obtain that ${J}''(u)v^2\geq 0$. The proof is finished.
	\end{proof}
	
\subsection{Second order sufficient optimality conditions for $L^{\infty}$-local solutions}
In this section, we impose as in the local case \cite{casas2020o} the following suitable structural assumption on $\frac{\partial H}{\partial u}(x,y,q,u)$, where $u\in \mathcal{U}$ is a control satisfying \eqref{ineq1}, and $y$, $q$ are the associated state and adjoint state, respectively.

	\begin{assumption}\label{structural}
		There exist three constants $K>0$, $\varepsilon_0>0$ and $\gamma\in (0,+\infty]$ such that
		\begin{equation}\label{struct}
		\left|\left\{x\in \Omega: \left|\frac{\partial L}{\partial u}(x,y(x),u(x))+ q(x) \frac{\partial F}{\partial u}(x,y(x),u(x))\right|<\varepsilon \right\}\right|\leq K\varepsilon^{\gamma}, \quad\forall \varepsilon\leq\varepsilon_0.
		\end{equation}
	\end{assumption}
	
	\begin{theorem}\label{linf}
		Let Assumptions \ref{ass1}, \ref{ass2} and \ref{structural} hold.
		Let $u\in \mathcal{U}$ be a control satisfying \eqref{ineq1}. Then,  there is  a constant $\kappa>0$ such that
		\begin{equation}\label{growth}
		J'(u)(v-u)\geq \kappa\|v-u\|_{L^1(\Omega)}^{1+\frac{1}{\gamma}}\quad \forall v\in \mathcal{U},
		\end{equation}
			where 
		\begin{equation}\label{kappa}
		\kappa=\dis \frac{1}{2[4\max(|\alpha|,|\beta|)K]^{\frac{1}{\gamma}}}.
		\end{equation}		
In addition, if $\gamma=+\infty$, then there exists $\eta>0$ such that		
		\begin{equation}\label{cdt2}
			J(v)\geq J(u)+\frac{\kappa}{2}\|v-u\|_{L^1(\Omega)}\quad \forall v\in \mathcal{U}\cap B_{\eta}^\infty(u),
		\end{equation}
where $B_{\eta}^\infty(u)$ is the open ball in $L^\infty(\Omega)$ with center $u$ and radius $\eta$. So, $u$ is strictly locally optimal in the sense of $L^\infty(\Omega)$.
	\end{theorem}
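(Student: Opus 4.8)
The plan is to follow the local-case strategy of \cite{casas2020o}. Put
\[
d(x):=\frac{\partial H}{\partial u}\bigl(x,y(x),q(x),u(x)\bigr)=\frac{\partial L}{\partial u}\bigl(x,y(x),u(x)\bigr)+q(x)\,\frac{\partial F}{\partial u}\bigl(x,y(x),u(x)\bigr),
\]
so that, by \eqref{diff5}, $J'(u)(v-u)=\int_{\Omega}d(x)\bigl(v(x)-u(x)\bigr)\,\dx$ for every $v\in\mathcal{U}$. The first step is to extract from \eqref{ineq1} the pointwise sign pattern of $d$: testing \eqref{ineq1} with controls of the form $u+\chi_E(w-u)$ for arbitrary measurable $E\subset\Omega$ and $w\in[\alpha,\beta]$ (and letting $w$ run through a dense subset of $[\alpha,\beta]$, using continuity) one gets, for a.e.\ $x\in\Omega$, that $d(x)\ge 0$ where $u(x)=\alpha$, $d(x)\le 0$ where $u(x)=\beta$, and $d(x)=0$ where $\alpha<u(x)<\beta$. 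Since every $v\in\mathcal{U}$ satisfies $v(x)-u(x)\ge 0$ on $\{u=\alpha\}$ and $v(x)-u(x)\le 0$ on $\{u=\beta\}$, this produces the identity
\[
J'(u)(v-u)=\int_{\Omega}\abs{d(x)}\,\abs{v(x)-u(x)}\,\dx\ \ge\ 0,\qquad\forall\,v\in\mathcal{U},
\]
which is the cornerstone of the whole proof.

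To obtain \eqref{growth} when $\gamma<\infty$, I would fix $\varepsilon\in(0,\varepsilon_0]$, restrict the above integral to $\{\abs{d}\ge\varepsilon\}$ (where $\abs{d}\ge\varepsilon$), estimate $\int_{\{\abs{d}\ge\varepsilon\}}\abs{v-u}\,\dx\ge\norm{v-u}_{L^1(\Omega)}-\int_{A_\varepsilon}\abs{v-u}\,\dx$ with $A_\varepsilon:=\{\abs{d}<\varepsilon\}$, and bound the last integral by $\abs{v-u}\le\beta-\alpha\le 2\max(\abs{\alpha},\abs{\beta})$ together with $\abs{A_\varepsilon}\le K\varepsilon^{\gamma}$ from \eqref{struct}. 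This gives
\[
J'(u)(v-u)\ \ge\ \varepsilon\,\norm{v-u}_{L^1(\Omega)}-2\max(\abs{\alpha},\abs{\beta})\,K\,\varepsilon^{\gamma+1}.
\]
Choosing $\varepsilon:=\bigl(\norm{v-u}_{L^1(\Omega)}/(4\max(\abs{\alpha},\abs{\beta})K)\bigr)^{1/\gamma}$ makes the subtracted term equal to one half of the first and yields exactly \eqref{growth} with $\kappa$ as in \eqref{kappa}; this $\varepsilon$ is admissible, i.e.\ $\varepsilon\le\varepsilon_0$, because $\norm{v-u}_{L^1(\Omega)}\le(\beta-\alpha)\abs{\Omega}$ and, since \eqref{struct} is preserved when $K$ is enlarged, we may assume $K$ large enough for this to hold. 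When $\gamma=+\infty$, \eqref{struct} forces $\abs{d(x)}\ge\varepsilon_0$ for a.e.\ $x\in\Omega$, and then \eqref{growth} (with exponent $1+\tfrac1\gamma=1$) follows at once from the identity above.

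Assume now $\gamma=+\infty$ and let us prove \eqref{cdt2}. By Proposition \ref{diff4}, $J$ is of class $\mathcal C^2$ on $L^\infty(\Omega)$, so for $v\in\mathcal{U}$ a second order Taylor expansion provides $\theta=\theta(v)\in(0,1)$ such that, with $u_\theta:=u+\theta(v-u)\in\mathcal{U}$ (by convexity of $\mathcal{U}$),
\[
J(v)=J(u)+J'(u)(v-u)+\tfrac12\,J''(u_\theta)(v-u)^2 .
\]
By \eqref{growth} the first order term is $\ge\kappa\norm{v-u}_{L^1(\Omega)}$, while Lemma \ref{cont1} supplies a constant $C=C(N,s,\Omega,\alpha,\beta)>0$, uniform for controls in $\mathcal{U}$, with $\abs{J''(u_\theta)(v-u)^2}\le C\norm{v-u}_{L^2(\Omega)}^2$. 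The elementary interpolation $\norm{v-u}_{L^2(\Omega)}^2\le\norm{v-u}_{L^\infty(\Omega)}\norm{v-u}_{L^1(\Omega)}$ shows that, for $v\in\mathcal{U}\cap B_{\eta}^\infty(u)$, the second order term is bounded by $C\eta\norm{v-u}_{L^1(\Omega)}$. Taking $\eta:=\kappa/C$ we conclude
\[
J(v)\ \ge\ J(u)+\kappa\norm{v-u}_{L^1(\Omega)}-\tfrac12\kappa\norm{v-u}_{L^1(\Omega)}=J(u)+\tfrac\kappa2\norm{v-u}_{L^1(\Omega)},
\]
which is \eqref{cdt2}; in particular $J(v)>J(u)$ whenever $v\ne u$, so $u$ is a strict $L^\infty$-local solution. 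The routine ingredients here are Taylor's formula, the splitting at $A_\varepsilon$, and the $L^2$--$L^1$ interpolation; the genuinely delicate point is the balancing of orders: the first order gain $J'(u)(v-u)$ is only of size $\norm{v-u}_{L^1(\Omega)}$ (which is precisely what Assumption \ref{structural} and the sign pattern of $d$ are needed for), whereas the second order remainder is controlled in $L^2(\Omega)$, so the two can be matched only when the increment is measured in $L^\infty(\Omega)$ — this is why the conclusion is $L^\infty$-local optimality and nothing stronger, and why the restriction $\varepsilon\le\varepsilon_0$ in \eqref{struct} must be weighed against the a priori bound $\norm{v-u}_{L^1(\Omega)}\le(\beta-\alpha)\abs{\Omega}$.
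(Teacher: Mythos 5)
Your proof is correct and follows essentially the same two-step strategy as the paper: derive the pointwise sign pattern of $d$ from \eqref{ineq1} so that $J'(u)(v-u)=\int_\Omega|d|\,|v-u|\,\dx$, split $\Omega$ at the level set $\{|d|\ge\varepsilon\}$, optimize over $\varepsilon$ to get \eqref{growth}, and then combine a second-order Taylor expansion with the uniform $L^2$ bound from Lemma \ref{cont1} and the interpolation $\norm{v-u}_{L^2}^2\le\norm{v-u}_{L^\infty}\norm{v-u}_{L^1}$ to get \eqref{cdt2}. You are, if anything, a bit more careful than the paper in spelling out why $\varepsilon\le\varepsilon_0$ can be arranged and in writing the first display as an equality rather than an inequality.
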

	
	\begin{proof}
		We proceed in two steps.

\textbf{Step 1.} We prove first that there is a constant $\kappa>0$ such that \eqref{growth} holds.
Indeed, from \eqref{ineq1}, we have that for a.e $x\in \Omega$,
$$\left(\frac{\partial L}{\partial u}(x,y(x),u(x))+q(x) \frac{\partial F}{\partial u}(x,y(x),u(x)\right)(v(x)-u(x))\geq 0\;\;\;\text{for every}\;\;\; v\in \U.$$
Now, let $\varepsilon>0$ and define the set 
$$\Omega_{\varepsilon}:=\left\{x\in \Omega: \left|\frac{\partial L}{\partial u}(x,y(x),u(x))+ q(x) \frac{\partial F}{\partial u}(x,y(x),u(x))\right|\geq \varepsilon \right\}.$$
It follows from \eqref{struct} that 
\begin{equation}\label{5}
|\Omega\setminus\Omega_{\varepsilon}|\leq K\varepsilon^{\gamma}.
\end{equation}	
Hence,
\begin{eqnarray}\label{3}
J'(u)(v-u)&=&\dis\int_{\Omega}\left( \frac{\partial L}{\partial u}(x,y(x),u(x))+q(x) \frac{\partial F}{\partial u}(x,y(x),u(x)\right)(v(x)-u(x))\,\dx\nonumber\\
&\geq&\dis\int_{\Omega} \left|\frac{\partial L}{\partial u}(x,y(x),u(x))+ q(x) \frac{\partial F}{\partial u}(x,y(x),u(x))\right||v(x)-u(x)|\,\dx\nonumber\\
&\geq& \varepsilon\|v-u\|_{L^1(\Omega_{\varepsilon})}=\varepsilon\|v-u\|_{L^1(\Omega)}-\varepsilon\|v-u\|_{L^1(\Omega\setminus\Omega_{\varepsilon})}.
\end{eqnarray}	
Thanks to \eqref{5}, we have that
	\begin{equation}\label{4}
	\|v-u\|_{L^1(\Omega\setminus\Omega_{\varepsilon})}=\dis \int_{\Omega\setminus\Omega_{\varepsilon}}|v(x)-u(x)|\,\dx\leq2\max(|\alpha|,|\beta|)K\varepsilon^{\gamma}.
	\end{equation}	
Combining \eqref{3}-\eqref{4} we arrive to 
\begin{equation}\label{6}
J'(u)(v-u)\geq \varepsilon\|v-u\|_{L^1(\Omega)}-2\varepsilon\max(|\alpha|,|\beta|)K\varepsilon^{\gamma}.
\end{equation}
Taking 
$$\dis \varepsilon:=\frac{\|v-u\|_{L^1(\Omega)}^{\frac{1}{\gamma}}}{[4\max(|\alpha|,|\beta|)K]^{\frac{1}{\gamma}}},$$
it follows from \eqref{6} that 
\begin{equation*}
J'(u)(v-u)\geq\frac{\|v-u\|_{L^1(\Omega)}^{1+\frac{1}{\gamma}}}{2[4\max(|\alpha|,|\beta|)K]^{\frac{1}{\gamma}}}.
\end{equation*}
Therefore,  \eqref{growth} holds with $\kappa$ given by \eqref{kappa}.

 \textbf{Step 2.} We prove \eqref{cdt2}. Let $u\in \mathcal{U}\cap B_{\eta}^\infty(u)$, where $\eta$ will be fixed later. Using Proposition \ref{diff5}, we have that $J$ is of class $\mathcal{ C}^2$. Therefore,  the second-order Taylor expansion of $J$ at $u$ gives
$$J(v)=J(u)+J'(u)(v-u)+\frac{1}{2}J''(u+\theta(v-u))(v-u)^2,$$
where $\theta\in (0,1).$
Using \eqref{estJs} and \eqref{growth} with $\gamma=+\infty$,  we obtain that
$$J(v)\geq J(u)+\kappa\|v-u\|_{L^1(\Omega)}-C\|v-u\|^2_{L^2(\Omega)},$$
where the constant $C$ is independent of $u$ and $v$.  Observe  that 
\begin{equation*}
\|v-u\|^2_{L^2(\Omega)}\leq \|v-u\|_{L^\infty}\|v-u\|_{L^1(\Omega)}\leq \eta \|v-u\|_{L^1(\Omega)}.
\end{equation*}
Therefore, 
$$J(v)\geq J(u)+(\kappa-\eta C)\|v-u\|_{L^1(\Omega)}.$$
Taking $\eta=\frac{\kappa}{2C}$ leads us to \eqref{cdt2}. This completes the proof.
	\end{proof}

\subsection{Second order sufficient optimality conditions for $L^2(\Omega)$- local solutions}

The main concern of the present section is to find sufficient conditions for $L^2(\Omega)$-local optimality of a feasible control $\bar u\in\mathcal U$ that satisfies the first order necessary optimality conditions. To do this we need the following result.

\begin{proposition}
Let $0<s<1$ and $\tilde{p}>\frac{N}{2s}$.  Let Assumptions \ref{ass1} and \ref{ass2} hold. Assume that $\Omega$ is of class $\mathcal{C}^{1,\sigma}$ for $0<\sigma:=2s-\frac{N}{{\tilde{p}}}-1<1$.  Then, there are positive constants $C$ and $C_{\tilde p}$ such that for all $u,\bar u\in\mathcal U$ the following estimates hold:
\begin{align}
\|y-\bar y\|_{L^2(\Omega)}&\leq C\|u-\bar u\|_{L^1(\Omega)}\quad \mbox{ if } N<4s\label{i1}\\
\|y-\bar y\|_{L^\infty(\Omega)}&\leq C_{\tilde p}\|u-\bar u\|_{L^{\tilde p}(\Omega)}\label{i2}\\
\|q-\bar q\|_{L^\infty(\Omega)}&\leq C_{\tilde p}\|u-\bar u\|_{L^{\tilde p}(\Omega)} \label{i3}\\
\|G'(u)v\|_{L^2(\Omega)}&\leq C\|v\|_{L^1(\Omega)},\;\qquad \; \forall\; v\in L^\infty(\Omega) \;\mbox{ and } N<4s,\label{i4}
\end{align}
where $y$ and $\bar y$, $q$ and $\bar q$ are the states and adjoint states  associated  to the controls $u$ and $\bar u$, respectively. 
\end{proposition}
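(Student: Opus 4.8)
The plan is to prove the four estimates in turn, exploiting the monotonicity of $F$ in its second variable together with the $L^p$-regularity theory of Theorem \ref{theo-23} and the embeddings \eqref{equality0}. The key algebraic device throughout is the same one used in Proposition \ref{convergence}: testing the difference equation against the difference of the states, using $\frac{\partial F}{\partial y}\le 0$ to discard the ``diagonal'' term, and controlling the remaining term by the Lipschitz bound \eqref{as2} on $\frac{\partial F}{\partial u}$. What changes here is the choice of exponents: instead of measuring the control perturbation in $L^{\tilde p}$ we measure it in $L^1$, which forces us to pair $y-\bar y$ with an $L^\infty$ (or high $L^r$) norm, hence the restriction $N<4s$ for \eqref{i1} and \eqref{i4}.

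First I would prove \eqref{i2}. The difference $y-\bar y$ solves $(-\Delta_D)^s(y-\bar y)=F(x,y,u)-F(x,\bar y,\bar u)$; writing the right-hand side as $[F(x,y,u)-F(x,\bar y,u)]+[F(x,\bar y,u)-F(x,\bar y,\bar u)]$ and testing with $y-\bar y$, the monotonicity of $F$ in $y$ gives $\|y-\bar y\|_{\V}^2\le C_{F,M}\|u-\bar u\|_{L^{\tilde p}}\|y-\bar y\|_{L^{\tilde p'}}$, and since $\tilde p>N/(2s-1)\ge N/2s$ we have $\tilde p'<\frac{N}{N-2s}$, so $\V\hookrightarrow L^{\tilde p'}(\Omega)$ and hence $\|y-\bar y\|_{\V}\le C\|u-\bar u\|_{L^{\tilde p}}$. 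Then \eqref{as2} gives $\|F(\cdot,y,u)-F(\cdot,\bar y,\bar u)\|_{L^{\tilde p}}\le C_{F,M}(\|y-\bar y\|_{L^{\tilde p}}+\|u-\bar u\|_{L^{\tilde p}})$, and feeding this into the regularity estimate \eqref{est2} (Theorem \ref{theoregularity}) together with the just-proved $\V$-bound yields \eqref{i2}, since $\mathcal C^{0,\sigma}(\bar\Omega)\hookrightarrow L^\infty(\Omega)$. For \eqref{i3} the adjoint difference $q-\bar q$ solves $(-\Delta_D)^s(q-\bar q)=\frac{\partial F}{\partial y}(x,y,u)(q-\bar q)+[\frac{\partial F}{\partial y}(x,y,u)-\frac{\partial F}{\partial y}(x,\bar y,\bar u)]\bar q+[\frac{\partial L}{\partial y}(x,y,u)-\frac{\partial L}{\partial y}(x,\bar y,\bar u)]$; testing with $q-\bar q$ and again discarding the monotone term, the source terms are estimated by \eqref{as2}, \eqref{as7}, the $L^\infty$-bound on $\bar q$ from \eqref{estadjoint}, and \eqref{i2} already proven, giving an $L^{\tilde p}$-bound on the source hence via \eqref{est2} an $L^\infty$-bound, i.e. \eqref{i3}.

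Next I would prove \eqref{i1} and \eqref{i4}, which is where the condition $N<4s$ enters. For \eqref{i1}, repeat the testing argument but now bound $\|y-\bar y\|_{\V}^2\le C_{F,M}\|u-\bar u\|_{L^1}\|y-\bar y\|_{L^\infty}$, and use \eqref{i2} with $\tilde p$ replaced by an exponent large enough that $\|u-\bar u\|_{L^{\tilde p}}\le C\|u-\bar u\|_{L^1}^{\theta}\cdots$ — actually cleaner: since $N<4s$ we have $2<\frac{2N}{N-2s}$ is automatic, but the point is $\V\hookrightarrow L^2(\Omega)$ always, and more: when $N<4s$, $\frac{2N}{N-2s}>\cdots$; the clean route is to note $\|y-\bar y\|_{\V}^2\le C\|u-\bar u\|_{L^1}\|y-\bar y\|_{L^\infty}\le C\|u-\bar u\|_{L^1}\cdot C_{\tilde p}\|u-\bar u\|_{L^{\tilde p}}$ using \eqref{i2}, and then since $\mathcal U$ is bounded in $L^\infty$, $\|u-\bar u\|_{L^{\tilde p}}\le C$, giving $\|y-\bar y\|_{\V}\le C\|u-\bar u\|_{L^1}^{1/2}$ — not quite linear. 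So instead: test with $y-\bar y$ and keep $\|y-\bar y\|_{\V}^2\le C\|u-\bar u\|_{L^1}\|y-\bar y\|_{L^\infty}$; then use \eqref{i2} to bound $\|y-\bar y\|_{L^\infty}\le C_{\tilde p}\|u-\bar u\|_{L^{\tilde p}}$, and finally the bootstrap: by Theorem \ref{theo-23}(d), $N<4s \iff 2<\frac{N}{N-2s}$, so $f\in L^2$ gives $\varphi\in L^2$ with $\|\varphi\|_{L^2}\le C\|f\|_{L^1}$; apply this to $y-\bar y$ with $f=F(x,y,u)-F(x,\bar y,\bar u)$, whose $L^1$-norm is controlled by $C_{F,M}(\|y-\bar y\|_{L^1}+\|u-\bar u\|_{L^1})$; absorbing $\|y-\bar y\|_{L^1}\le C\|y-\bar y\|_{L^2}$ on the left for $\Omega$ of finite measure (or iterating) yields \eqref{i1}. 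Estimate \eqref{i4} is the linearized analogue: $z=G'(u)v$ solves $(-\Delta_D)^s z=\frac{\partial F}{\partial y}(x,y,u)z+\frac{\partial F}{\partial u}(x,y,u)v$, and since $\frac{\partial F}{\partial y}\le0$, Theorem \ref{theo-23}(d) applied with source $\frac{\partial F}{\partial u}(x,y,u)v\in L^1(\Omega)$ (bounded by $C_{F,M}\|v\|_{L^1}$ via \eqref{as2}) gives, when $N<4s$, $\|z\|_{L^2}\le C\|\frac{\partial F}{\partial u}(x,y,u)v\|_{L^1}\le C\|v\|_{L^1}$ — here one must first note that the zeroth-order term $\frac{\partial F}{\partial y}(x,y,u)z$ does not spoil the estimate, which follows because it has the good sign, exactly as in the derivation of \eqref{A3}.

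The main obstacle is making the $L^1\to L^2$ bootstrap in \eqref{i1} and \eqref{i4} rigorous: Theorem \ref{theo-23}(d) is stated for the equation $(-\Delta_D)^s\varphi+b(x)\varphi=f$ with $b\ge0$, and one has to check that the $L^1\to L^r$ estimate genuinely holds with a constant independent of $b$ (which it does, since $b\ge 0$ only helps in the duality/Stampacchia argument behind \eqref{A3}); the nonlinear case then follows by viewing $F(x,y,u)-F(x,\bar y,\bar u)$ as $-b(x)(y-\bar y)+g$ with $b(x)=-\int_0^1\frac{\partial F}{\partial y}(x,\bar y+t(y-\bar y),u)\,dt\ge 0$ and $g=F(x,\bar y,u)-F(x,\bar y,\bar u)$, so that $y-\bar y$ solves a linear equation of the form \eqref{LE} with nonnegative potential and $L^1$ datum $g$, $\|g\|_{L^1}\le C_{F,M}\|u-\bar u\|_{L^1}$. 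Everything else is a routine combination of \eqref{as2}, \eqref{as7}, the embeddings \eqref{equality0}, and the already-established estimates \eqref{est2}, \eqref{est8}, \eqref{estadjoint}.
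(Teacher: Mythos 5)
Your proposal is correct and, once the final paragraph's correction is taken into account, follows the same strategy as the paper: apply the Mean Value Theorem in the $y$-variable to split the difference of nonlinearities into a nonnegative potential term $b(x)(y-\bar y)$ plus a source $g$ controlled by $\|u-\bar u\|_{L^1}$, and then invoke the linear $L^1\to L^r$ estimate of Theorem \ref{theo-23}(d) (valid precisely when $2<N/(N-2s)$, i.e. $N<4s$) for \eqref{i1} and \eqref{i4}, and the $L^{\tilde p}\to L^\infty$ estimate of Theorem \ref{theo-23}(c) for \eqref{i2} and \eqref{i3}. One caveat: your first attempt at \eqref{i1}, which treats the whole nonlinearity $F(x,y,u)-F(x,\bar y,\bar u)$ as an $L^1$ source and then tries to absorb $\|y-\bar y\|_{L^1}\leq C\|y-\bar y\|_{L^2}$ back into the left-hand side, does not actually close because the absorption constant $C_1 C_{F,M}|\Omega|^{1/2}$ has no reason to be less than $1$; only the decomposition in your final paragraph (with $-b=\partial_y F\leq 0$ moved to the left, leaving $g=F(x,\bar y,u)-F(x,\bar y,\bar u)$ alone on the right) is sound, and this is exactly what the paper does. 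The paper is also somewhat more direct for \eqref{i2}--\eqref{i3}: instead of first deriving a $\V$-bound by testing and then passing through the regularity estimate \eqref{est2}, it applies Theorem \ref{theo-23}(c) directly to the difference equations with $L^{\tilde p}$ sources; both routes yield the stated inequalities.
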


\begin{proof}
It follows from \eqref{estc01} that there is a constant $M>0$ such that,
\begin{equation}\label{mj1}
\|y\|_{\mathcal C(\bar\Omega)}+\|u\|_{L^\infty(\Omega)}\le M\;\;\forall u\in\mathcal U.
\end{equation}
Subtracting the equations satisfied by $y$ and $\bar y$ and using the Mean Value Theorem we get 
\begin{align}\label{SJW}
(-\Delta_D)^s(y-\bar y)=& \Big(F(x,y,\bar u)-F(x,\bar y,\bar u)\Big)+ \Big(F(x,y,u)-F(x,y,\bar u)\Big)\notag\\
=&\frac{\partial F}{\partial y}(x,\bar y+\theta(y-\bar y),\bar u)(y-\bar y)+ \Big(F(x,y,u)-F(x,y,\bar u)\Big).
\end{align}
Since $N<4s$, we have that $2<N/(N-2s)$. Thus,  using Theorem \ref{theo-23} and Assumption \ref{ass1}  we can deduce that there is a constant $C_1>0$ such that
\begin{align*}
\|y-\bar y\|_{L^2(\Omega)}\le C_1\|F(x,y,u)-F(x,y,\bar u)\|_{L^1(\Omega)}\le CC_{F,M}\|u-\bar u\|_{L^1(\Omega)}
\end{align*}
and we have shown \eqref{i1}.

Using again Equation \eqref{SJW}, the $L^\infty(\Omega)$-estimates for the linear system given in Theorem \ref{theo-23} and Assumption \ref{ass1}, we obtain that there is a constant $C(\tilde p)>0$ such that
\begin{align*}
\|y-\bar y\|_{L^{\infty}(\Omega)}\le C(\tilde p)\|F(x,y,u)-F(x,y,\bar u)\|_{L^{\tilde p}(\Omega)}\le C(\tilde p)C_{F,M}\|u-\bar u\|_{L^{\tilde p}(\Omega)}
\end{align*}
and this gives \eqref{i2}.

To prove \eqref{i3},  we subtract the equations satisfied by $q$ and $\bar q$ to get
\begin{align*}
(-\Delta_D)^s(q-\bar q)=&\frac{\partial F}{\partial y}(x,y,u)q-\frac{\partial F}{\partial y}(x,\bar y,\bar u)\bar q  +\frac{\partial L}{\partial y}(x,y,u)-\frac{\partial L}{\partial y}(x,\bar y,\bar u)\\
=&\frac{\partial F}{\partial y}(x,y,u)(q-\bar q)+\left[\frac{\partial F}{\partial y}(x,y,u)-\frac{\partial F}{\partial y}(x,\bar y,\bar u)\right]q+\left[\frac{\partial L}{\partial y}(x,y,u)-\frac{\partial L}{\partial y}(x,\bar y,\bar u)\right].
\end{align*}
Using the $L^\infty(\Omega)$-estimates given in Theorem \ref{theo-23}, Assumptions \ref{ass1} and \ref{ass2},  and \eqref{i2} we can deduce that there is a constant $C(\tilde p)>0$ such that
\begin{align*}
\|q-\bar q \|_{L^\infty(\Omega)}\le& C(\tilde p)\left[C_{F,N}M\|y-\bar y\|_{L^{\tilde p}(\Omega)}+\|u-\bar u\|_{L^{\tilde p}(\Omega)}+C_{L,M}\|y-\bar y\|_{L^{\tilde p}(\Omega)}\right]\\
\le &C(\tilde p)\|u-\bar u\|_{L^{\tilde p}(\Omega)}
\end{align*}
and we have shown \eqref{i3}.
Finally, \eqref{i4} is a direct consequence of Assumption \ref{ass1}, Theorem \ref{theo-23} and \eqref{diff1}. The proof is finished.
\end{proof}

The following lemma will be very useful for the rest of the paper.

\begin{lemma}\cite[\bf Lemma 3.5]{casas2012}
	\label{analysis}
Let $(X,\Sigma,\mu)$ be a measure space with $\mu(X)<+\infty$. Suppose that $(g_k)_{k\geq 1}\subset L^\infty(X)$ and $(v_k)_{k\geq 1}\subset L^2(X)$ satisfy the following assumptions:
\begin{itemize}
\item $g_k\geq 0$ a.e. in $X$, $\forall k\geq 1$, $(g_k)_{k\geq 1}$ is bounded in $L^\infty(X)$ and $g_k\to g$ strongly in $L^1(\Omega)$,  as $k\to \infty$.
\item $v_k\rightharpoonup v$ weakly in $L^2(\Omega)$,  as $k\to \infty$.
\end{itemize}
Then,
\begin{equation}
\int_{X}g(x)v^2(x) d\mu(x)\leq \liminf_{k\to \infty}\int_{X}g_k(x)v_k^2(x) d\mu(x).
\end{equation}
\end{lemma}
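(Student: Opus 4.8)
The plan is to exploit the elementary identity $\int_X g_k v_k^2\,d\mu=\bigl\|\sqrt{g_k}\,v_k\bigr\|_{L^2(X)}^2$ together with the weak lower semicontinuity of the norm in $L^2(X)$. First I would reduce to a convenient subsequence: pick a subsequence (not relabeled) along which $\int_X g_k v_k^2\,d\mu$ converges to $\liminf_{k\to\infty}\int_X g_k v_k^2\,d\mu$, and then, using $g_k\to g$ in $L^1(X)$, pass to a further subsequence along which $g_k\to g$ $\mu$-a.e. Setting $C:=\sup_k\|g_k\|_{L^\infty(X)}<\infty$, one has $0\le g\le C$ a.e., since an $L^1$-limit of functions bounded by $C$ is bounded by $C$; in particular $g\in L^\infty(X)$.

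Next I would put $h_k:=\sqrt{g_k}$ and $h:=\sqrt{g}$. Then $0\le h_k\le\sqrt C$, $h\in L^\infty(X)$, $h_k\to h$ $\mu$-a.e., and from the pointwise inequality $|\sqrt a-\sqrt b|^2\le|a-b|$ for $a,b\ge 0$ one gets $\|h_k-h\|_{L^2(X)}^2\le\|g_k-g\|_{L^1(X)}\to 0$, so $h_k\to h$ strongly in $L^2(X)$. Note also $h_k v_k\in L^2(X)$ because $h_k\in L^\infty(X)$ and $v_k\in L^2(X)$.

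The key step is to show $h_k v_k\rightharpoonup h v$ weakly in $L^2(X)$. For $\phi\in L^2(X)$ I would write
\[
\int_X h_k v_k\phi\,d\mu-\int_X h v\phi\,d\mu=\int_X (h_k-h)\phi\,v_k\,d\mu+\int_X h\phi\,(v_k-v)\,d\mu .
\]
The second term tends to $0$ since $h\phi\in L^2(X)$ and $v_k\rightharpoonup v$ in $L^2(X)$. For the first term, estimate $\bigl|\int_X (h_k-h)\phi\,v_k\,d\mu\bigr|\le\|(h_k-h)\phi\|_{L^2(X)}\,\|v_k\|_{L^2(X)}$; the factor $\|v_k\|_{L^2(X)}$ is bounded (a weakly convergent sequence is bounded), while $\|(h_k-h)\phi\|_{L^2(X)}^2=\int_X|h_k-h|^2\phi^2\,d\mu\to 0$ by dominated convergence, using $|h_k-h|^2\phi^2\le 4C\phi^2\in L^1(X)$ and $h_k\to h$ a.e. This is the one delicate point: the product of a strongly convergent and a weakly convergent sequence in $L^2$ is in general only convergent in $L^1$, so the uniform $L^\infty$-bound on $h_k$ (hence dominated convergence against an arbitrary $L^2$ test function) is what makes the product weakly convergent in $L^2$; I expect this to be the main technical obstacle of the argument.

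Finally, weak lower semicontinuity of $\|\cdot\|_{L^2(X)}^2$ gives
\[
\int_X g v^2\,d\mu=\|h v\|_{L^2(X)}^2\le\liminf_{k\to\infty}\|h_k v_k\|_{L^2(X)}^2=\liminf_{k\to\infty}\int_X g_k v_k^2\,d\mu ,
\]
where the last equality holds because the (final) subsequence was extracted from one converging to the $\liminf$ of the original sequence. Since the inequality holds with the right-hand side equal to the $\liminf$ of the original sequence, this proves the lemma.
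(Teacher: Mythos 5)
The paper does not reproduce a proof of this lemma; it simply cites \cite[Lemma 3.5]{casas2012}, so there is no in-paper argument to compare against. Judged on its own merits, your proof is correct. The subsequence reductions (first to one realizing the $\liminf$, then to one along which $g_k\to g$ a.e.) are legitimate, the inequality $|\sqrt a-\sqrt b|^2\le|a-b|$ correctly converts $L^1$-convergence of $g_k$ into $L^2$-convergence of $h_k=\sqrt{g_k}$, and the decomposition
\[
\int_X h_k v_k\phi\,d\mu-\int_X h v\phi\,d\mu=\int_X (h_k-h)\phi\,v_k\,d\mu+\int_X h\phi\,(v_k-v)\,d\mu
\]
handles the product of a strongly and a weakly convergent sequence precisely where it is delicate: you use the uniform $L^\infty$ bound on $h_k$ and dominated convergence (against the fixed $\phi^2\in L^1$, which is where $\mu(X)<\infty$ is not even needed) to get $\|(h_k-h)\phi\|_{L^2}\to 0$, while $h\phi\in L^2$ lets the weak convergence of $v_k$ absorb the second term. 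Concluding with weak lower semicontinuity of $\|\cdot\|_{L^2}^2$ and the observation that along the chosen subsequence $\int_X g_k v_k^2\,d\mu$ converges to the original $\liminf$ is exactly right. For comparison, an alternative route frequently seen for this kind of statement uses Egorov's theorem: on sets $E_m$ with small complement where $g_k\to g$ uniformly, one bounds $\liminf_k\int_X g_k v_k^2\,d\mu\ge\liminf_k\int_{E_m}g v_k^2\,d\mu\ge\int_{E_m}g v^2\,d\mu$ (convexity plus strong continuity giving weak lower semicontinuity of $v\mapsto\int_{E_m}g v^2$), then lets $m\to\infty$ by monotone convergence. Your square-root argument is a bit cleaner since it avoids Egorov entirely and reduces everything to the weak lower semicontinuity of a single Hilbert norm.
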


In the rest of the paper, we assume that 
\begin{equation}\label{cond}
	\exists \nu>0 \;\;\text{such that}\;\; \frac{\partial ^2 H}{\partial u^2}(x,y(x),q(x),u(x))\geq \nu\quad\text{for a.e.}\;\; x\in \Omega,\;\;\forall  u(x)\in [\alpha,\beta].
\end{equation}
where $y$ and $q$ are the state and adjoint state associated with $u$, respectively, and $H$ is the Hamiltonian given in \eqref{ham}.

\begin{remark} The second Fr\'echet derivative of $J$ can be rewritten in terms of the Hamiltonian function $H$ defined in \eqref{ham} as follows:
\begin{equation}\label{diff7}
\left.
\begin{array}{lllll}
J''(u)v^2 &=&\dis \int_{\Omega} \frac{\partial^2 H}{\partial y^2}(x,y,q,u)(G'(u)v)^2\,\dx + 2\int_{\Omega} \frac{\partial^2 H}{\partial y\partial u}(x,y,q,u)(vG'(u)v)\,\dx\\
& &\dis +\int_{\Omega} \frac{\partial^2 H}{\partial u^2}(x,y,q,u)v^2\,\dx.
\end{array}
\right.
\end{equation}	
\end{remark}

\begin{theorem}
Let $u\in \U$ satisfy the first-order optimality condition \eqref{ineq} and $y$, $q$ be the associated state and adjoint state,  respectively.  Let also Assumptions \ref{ass1} and \ref{ass2} hold and $N<4s$. We assume that the structural assumption \eqref{struct} holds with $\gamma>1$. Then,  there exists $\varepsilon>0$ such that 
\begin{equation}\label{cdt3}
J(v)\geq J(u)+\frac{\kappa}{2}\|v-u\|^{1+\frac{1}{\gamma}}_{L^1(\Omega)}+\frac{\nu}{8}\|v-u\|^2_{L^2(\Omega)}\quad \forall v\in \mathcal{U}\cap \bar{B}_{\varepsilon}^2(u),
\end{equation}
where $\bar{B}_{\varepsilon}^2(u)$ is the closed ball in $L^2(\Omega)$ with center $u$ and radius $\varepsilon$ and $\kappa$ is given in \eqref{kappa}. 
\end{theorem}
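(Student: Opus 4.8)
The plan is to argue by contradiction via a blow-up (localization) argument around $u$, combining the first-order growth estimate of Theorem \ref{linf} — which only uses the structural hypothesis \eqref{struct} — with the positivity condition \eqref{cond} and the $L^1$-to-$L^2$ smoothing estimate \eqref{i4}, the latter being available precisely because $N<4s$. Since $u$ satisfies \eqref{ineq}, which by Theorem \ref{theoSO} is equivalent to \eqref{ineq1}, Theorem \ref{linf} applies and yields $J'(u)(v-u)\ge\kappa\|v-u\|_{L^1(\Omega)}^{1+1/\gamma}$ for all $v\in\mathcal U$, with $\kappa$ given in \eqref{kappa}. Now suppose \eqref{cdt3} fails for every $\varepsilon>0$; then there is a sequence $(v_n)\subset\mathcal U$ with $\rho_n:=\|v_n-u\|_{L^2(\Omega)}\in(0,1/n]$ such that $J(v_n)<J(u)+\frac{\kappa}{2}\|v_n-u\|_{L^1(\Omega)}^{1+1/\gamma}+\frac{\nu}{8}\rho_n^2$. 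Since $\mathcal U$ is convex, $u_n:=u+\theta_n(v_n-u)\in\mathcal U$ for any $\theta_n\in(0,1)$, and since $J$ is of class $\mathcal C^2$ on $L^\infty(\Omega)$ (Proposition \ref{diff4}), Taylor's theorem provides $\theta_n\in(0,1)$ with $J(v_n)=J(u)+J'(u)(v_n-u)+\tfrac12J''(u_n)(v_n-u)^2$. Combining this with the first-order bound gives
$$\frac{\kappa}{2}\|v_n-u\|_{L^1(\Omega)}^{1+1/\gamma}+\frac12J''(u_n)(v_n-u)^2<\frac{\nu}{8}\rho_n^2.$$

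From \eqref{estJs} (with its constant uniform over $\mathcal U$) we have $|J''(u_n)(v_n-u)^2|\le C\rho_n^2$, so the previous inequality forces $\|v_n-u\|_{L^1(\Omega)}^{1+1/\gamma}\le C_1\rho_n^2$, i.e. $\|v_n-u\|_{L^1(\Omega)}\le C_2\rho_n^{2\gamma/(\gamma+1)}$. Setting $w_n:=(v_n-u)/\rho_n$ so that $\|w_n\|_{L^2(\Omega)}=1$, and using $\gamma>1$, we deduce $\|w_n\|_{L^1(\Omega)}\le C_2\rho_n^{(\gamma-1)/(\gamma+1)}\to0$. Moreover, since $|v_n-u|\le\beta-\alpha$ and $\rho_n\to0$, interpolation gives $u_n\to u$ in $L^{\tilde p}(\Omega)$ while $(u_n)$ stays bounded in $L^\infty(\Omega)$; hence, by Proposition \ref{convergence} applied to the state equation and, by the same argument, to the adjoint equation, the associated states $y_n:=G(u_n)$ and adjoint states $q_n$ converge to $y$ and $q$ in $\V\cap\mathcal C^{0,\sigma}(\bar{\Omega})$, in particular uniformly on $\bar{\Omega}$. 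Finally, setting $z_n:=G'(u_n)w_n$, estimate \eqref{i4} (valid since $N<4s$) yields $\|z_n\|_{L^2(\Omega)}\le C\|w_n\|_{L^1(\Omega)}\to0$.

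Writing $J''(u_n)w_n^2$ through the Hamiltonian as in the analogue of \eqref{diff7} at $u_n$, we have $J''(u_n)w_n^2=I_1^n+2I_2^n+I_3^n$ with $I_1^n=\int_\Omega\frac{\partial^2 H}{\partial y^2}(x,y_n,q_n,u_n)z_n^2\,\dx$, $I_2^n=\int_\Omega\frac{\partial^2 H}{\partial y\partial u}(x,y_n,q_n,u_n)w_nz_n\,\dx$ and $I_3^n=\int_\Omega\frac{\partial^2 H}{\partial u^2}(x,y_n,q_n,u_n)w_n^2\,\dx$. By \eqref{as2} and \eqref{as7} the second derivatives of $H$ are bounded, so $|I_1^n|\le C\|z_n\|_{L^2(\Omega)}^2\to0$ and $|I_2^n|\le C\|w_n\|_{L^2(\Omega)}\|z_n\|_{L^2(\Omega)}=C\|z_n\|_{L^2(\Omega)}\to0$. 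For $I_3^n$, the uniform convergences $y_n\to y$, $q_n\to q$ in $\mathcal C(\bar{\Omega})$ together with the uniform-continuity hypotheses \eqref{as3} and \eqref{as8} give $\delta_n:=\big\|\frac{\partial^2 H}{\partial u^2}(\cdot,y_n,q_n,u_n)-\frac{\partial^2 H}{\partial u^2}(\cdot,y,q,u_n)\big\|_{L^\infty(\Omega)}\to0$; since $u_n(x)\in[\alpha,\beta]$, \eqref{cond} gives $\frac{\partial^2 H}{\partial u^2}(x,y(x),q(x),u_n(x))\ge\nu$ for a.e. $x$, whence $I_3^n\ge(\nu-\delta_n)\|w_n\|_{L^2(\Omega)}^2=\nu-\delta_n$. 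Therefore $\liminf_{n\to\infty}J''(u_n)w_n^2\ge\nu$. On the other hand, dividing the displayed inequality above by $\rho_n^2$ and discarding the nonnegative term $\frac{\kappa}{2}\|v_n-u\|_{L^1(\Omega)}^{1+1/\gamma}/\rho_n^2$ gives $J''(u_n)w_n^2<\nu/4$ for all $n$, which contradicts $\liminf J''(u_n)w_n^2\ge\nu$. Hence \eqref{cdt3} holds for some $\varepsilon>0$.

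The crux of the argument — and the step where the new hypotheses are used — is the lower bound $\liminf_n J''(u_n)w_n^2\ge\nu$. It rests on two points: first, that the mixed term $I_2^n$ and the pure-state term $I_1^n$ vanish, which requires $\|z_n\|_{L^2(\Omega)}\to0$, and this in turn needs both $\|w_n\|_{L^1(\Omega)}\to0$ (coming from $\gamma>1$ through the first-order estimate of Theorem \ref{linf}) and the regularizing bound \eqref{i4}, which is exactly where the dimensional restriction $N<4s$ is used; second, transferring the coercivity \eqref{cond} — stated at the triple $(y,q,u_n(\cdot))$ — to the perturbed triple $(y_n,q_n,u_n)$, which needs the uniform (not merely weak) convergence $y_n\to y$, $q_n\to q$ on $\bar{\Omega}$ supplied by Proposition \ref{convergence}, together with the uniform continuity of the second-order derivatives of $L$ and $F$.
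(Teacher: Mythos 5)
Your proof is correct, and it reaches the conclusion by a genuinely different route than the paper.  The paper argues directly: it writes the Taylor expansion $J(v)=J(u)+J'(u)(v-u)+\tfrac12 J''(v_\theta)(v-u)^2$, estimates $J''(v_\theta)(v-u)^2$ from below by $\tfrac{\nu}{4}\|v-u\|_{L^2(\Omega)}^2 - 2C\|v-u\|_{L^1(\Omega)}^2$ (using \eqref{i4}, Young's inequality, and a decomposition of the $\partial^2_{uu}H$ term analogous to your own), combines this with the first-order growth $J'(u)(v-u)\ge\kappa\|v-u\|_{L^1(\Omega)}^{1+1/\gamma}$ from Theorem~\ref{linf}, and then absorbs the negative term $-C\|v-u\|_{L^1(\Omega)}^2$ into half of $\kappa\|v-u\|_{L^1(\Omega)}^{1+1/\gamma}$, which is possible precisely because $\gamma>1$ makes $1+1/\gamma<2$; this is where $\varepsilon$ is fixed.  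You instead argue by contradiction and normalize: $w_n:=(v_n-u)/\rho_n$.  Your use of the first-order bound to force $\|w_n\|_{L^1(\Omega)}\to 0$ while $\|w_n\|_{L^2(\Omega)}=1$ is exactly the place where $\gamma>1$ enters your argument, playing the same role as the absorption step in the paper.  From $\|w_n\|_{L^1(\Omega)}\to 0$ and \eqref{i4} (which needs $N<4s$, as in the paper) you get $z_n:=G'(u_n)w_n\to 0$ in $L^2(\Omega)$, so the $I_1^n$ and $I_2^n$ contributions to $J''(u_n)w_n^2$ vanish \emph{without} any Young split, while the $I_3^n$ term stays bounded below by $\nu-\delta_n$ via the uniform convergence $y_n\to y$, $q_n\to q$ on $\bar\Omega$ and the modulus-of-continuity hypotheses \eqref{as3}, \eqref{as8} — the same transfer argument the paper uses in its $\varepsilon$-smallness estimates \eqref{diff13}--\eqref{diff16}.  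The trade-off is that your proof is more compact and avoids bookkeeping of the final absorption, but is non-constructive: the paper's direct version produces an explicit admissible $\varepsilon$ (through the inequality $C|\Omega|^{\frac12-\frac1{2\gamma}}\varepsilon^{1-\frac1\gamma}\le\kappa/2$), which a contradiction argument does not.  One small point worth making explicit if you write this up: the uniform $L^\infty$ convergence of the adjoint states should be cited from estimate \eqref{i3} rather than by loosely invoking Proposition~\ref{convergence} ``for the adjoint equation,'' since that proposition is stated only for the forward state.
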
	

\begin{proof}
Let $ v\in \mathcal{U}\cap \bar{B}_{\varepsilon}^2(u)$, where $\varepsilon$ will be chosen later. We set $v_\theta:=u+\theta(v-u)$, $\theta\in (0,1)$. Using the Taylor expansion and \eqref{growth},   we write
\begin{align}\label{diff8}
J(v) =&\dis J(u)+J'(u)(v-u)+\frac{1}{2}J''(v_\theta)(v-u)^2\notag\\
\geq&\dis J(u)+ \kappa\|v-u\|_{L^1(\Omega)}^{1+\frac{1}{\gamma}}+\frac{1}{2}J''(v_\theta)(v-u)^2.
\end{align}	
We have that,
\begin{align}\label{diff9}
J''(v_\theta)(v-u)^2 &=\dis \int_{\Omega} \frac{\partial^2 H}{\partial y^2}(x,y_\theta,q_\theta,v_\theta)(G'(v_\theta)(v-u))^2\,\dx \notag\\
&\dis + 2\int_{\Omega} \frac{\partial^2 H}{\partial y\partial u}(x,y_\theta,q_\theta,v_\theta)(v-u)G'(v_\theta)(v-u)\,\dx\notag\\
 &\dis +\int_{\Omega} \frac{\partial^2 H}{\partial u^2}(x,y_\theta,q_\theta,v_\theta)(v-u)^2\,\dx,
\end{align}	
where $y_\theta$ and $q_{\theta}$ are the state and the adjoint state associated to $v_\theta$,  respectively. We note that from \eqref{estc01} and \eqref{estadjoint}, we have that there exists $M>0$ such that
\begin{equation}\label{i0}
\|y_\theta\|_{\mathcal{C}(\bar{\Omega})}+\|q_\theta\|_{\mathcal{C}(\bar{\Omega})}\leq M.
\end{equation}
Using Young's inequality, \eqref{as2}, \eqref{as7}, \eqref{i4} and \eqref{i0} we obtain that there is a constant $C>0$ such that
\begin{equation}\label{diff10}
\left.
\begin{array}{lllll}
\dis\left|\int_{\Omega} \frac{\partial^2 H}{\partial y^2}(x,y_\theta,q_\theta,v_\theta)(G'(v_\theta)(v-u))^2\,\dx\right|&\leq & C\|v-u\|^2_{L^1(\Omega)},
\end{array}
\right.
\end{equation}	
and 
\begin{align}\label{diff11}
\dis 2\left|\int_{\Omega} \frac{\partial^2 H}{\partial y\partial u}(x,y_\theta,q_\theta,v_\theta)(v-u)G'(v_\theta)(v-u)\,\dx\right|\leq & \dis C\int_{\Omega}\left| (v-u)G'(v_\theta)(v-u)\right|\,\dx\notag\\
\leq& \dis \frac{\nu}{4}\|v-u\|^2_{L^2(\Omega)}+C\|v-u\|^2_{L^1(\Omega)}.
\end{align}
For the last term of \eqref{diff9}, we write
\begin{align}\label{diff12}
\dis \dis \int_{\Omega} \frac{\partial^2 H}{\partial u^2}(x,y_\theta,q_\theta,v_\theta)(v-u)^2\,\dx= &\dis  \int_{\Omega} \left[\frac{\partial^2 L}{\partial u^2}(x,y_\theta,v_\theta)-\frac{\partial^2 L}{\partial u^2}(x,y,v_\theta)\right](v-u)^2\,\dx\notag\\
& +\dis  \int_{\Omega} q\left[\frac{\partial^2 F}{\partial u^2}(x,y_\theta,v_\theta)-\frac{\partial^2 F}{\partial u^2}(x,y,v_\theta)\right](v-u)^2\,\dx\notag\\
& +\dis  \int_{\Omega} (q_{\theta}-q)\frac{\partial^2 F}{\partial u^2}(x,y_\theta,v_\theta)(v-u)^2\,\dx\notag\\
&+\dis  \int_{\Omega} \frac{\partial^2 H}{\partial u^2}(x,y,q,v_\theta)(v-u)^2\,\dx.
\end{align}	
Using \eqref{as2}, \eqref{as3}, \eqref{as8}, \eqref{i0} and \eqref{i3} with $\tilde{p}=2$, we get
\begin{equation}\label{diff13}
\left.
\begin{array}{lllll}
\dis \left|\int_{\Omega}\left[\frac{\partial^2 L}{\partial u^2}(x,y_\theta,v_\theta)-\frac{\partial^2 L}{\partial u^2}(x,y,v_\theta)\right](v-u)^2\,\dx\right|\leq \varepsilon \|v-u\|^2_{L^2(\Omega)},
\end{array}
\right.
\end{equation}	
\begin{equation}\label{diff14}
\left.
\begin{array}{lllll}
\dis \left|\int_{\Omega} q\left[\frac{\partial^2 F}{\partial u^2}(x,y_\theta,v_\theta)-\frac{\partial^2 F}{\partial u^2}(x,y,v_\theta)\right](v-u)^2\,\dx\right|\leq \varepsilon M \|v-u\|^2_{L^2(\Omega)},
\end{array}
\right.
\end{equation}	
\begin{equation}\label{diff15}
\left.
\begin{array}{lllll}
\dis \left|\int_{\Omega} (q_{\theta}-q)\frac{\partial^2 F}{\partial u^2}(x,y_\theta,v_\theta)(v-u)^2\,\dx\right|\leq C_{F,M} \|v-u\|^3_{L^2(\Omega)}.
\end{array}
\right.
\end{equation}	
Choosing $\varepsilon>0$ sufficiently small such that $\|v-u\|_{L^2(\Omega)}<\varepsilon$ and $\varepsilon(1+M+C_{F,M})\leq \frac{\nu}{2}$, we can deduce from \eqref{diff12}, \eqref{diff15} and \eqref{cond} that 
\begin{align}\label{diff16}
\dis \dis \int_{\Omega} \frac{\partial^2 H}{\partial u^2}(x,y_\theta,q_\theta,v_\theta)(v-u)^2\,\dx\geq & \dis -\frac{\nu}{2}\|v-u\|^2_{L^2(\Omega)}+\dis  \int_{\Omega} \frac{\partial^2 H}{\partial u^2}(x,y,q,v_\theta)(v-u)^2\,\dx\notag\\
\geq & \dis -\frac{\nu}{2}\|v-u\|^2_{L^2(\Omega)}+\nu\|v-u\|^2_{L^2(\Omega)}\notag\\
=&\dis \frac{\nu}{2}\|v-u\|^2_{L^2(\Omega)}.
\end{align}
Combining \eqref{diff9}, \eqref{diff10}, \eqref{diff11} and \eqref{diff16}, we arrive to
\begin{equation}\label{diff17}
J''(v_\theta)(v-u)^2 \geq \dis\frac{\nu}{4}\|v-u\|^2_{L^2(\Omega)}-2C\|v-u\|^2_{L^1(\Omega)}.
\end{equation}	
Hence, \eqref{diff8} leads us to
\begin{equation}\label{diff18}
J(v) \geq\dis J(u)+ \kappa\|v-u\|_{L^1(\Omega)}^{1+\frac{1}{\gamma}}-C\|v-u\|^2_{L^1(\Omega)}+\frac{\nu}{8}\|v-u\|^2_{L^2(\Omega)}.
\end{equation}	
Now, since $\gamma>1$, we have that
\begin{equation}\label{diff19}
\kappa\|v-u\|_{L^1(\Omega)}^{1+\frac{1}{\gamma}}-C\|v-u\|^2_{L^1(\Omega)}= \|v-u\|_{L^1(\Omega)}^{1+\frac{1}{\gamma}}\left[\kappa-C\|v-u\|_{L^1(\Omega)}^{1-\frac{1}{\gamma}}\right].
\end{equation}	
So, if we choose $\varepsilon>0$ such that 
\begin{equation*}
C\|v-u\|_{L^1(\Omega)}^{1-\frac{1}{\gamma}}\leq C|\Omega|^{\frac{1}{2}-\frac{1}{2\gamma}}\|v-u\|_{L^2(\Omega)}^{1-\frac{1}{\gamma}}\leq C|\Omega|^{\frac{1}{2}-\frac{1}{2\gamma}}\varepsilon^{1-\frac{1}{\gamma}}\leq \frac{\kappa}{2},
\end{equation*}
we can deduce \eqref{cdt3} from \eqref{diff19}. The proof is finished.
\end{proof}	

Now, we can state and prove the last main result of the paper.

\begin{theorem}
	Let $u\in \U$ satisfy the first-order optimality condition \eqref{ineq} and $y$, $q$ be the associated state and adjoint state,  respectively.  Let also Assumptions \ref{ass1} and \ref{ass2} hold. We assume that 
	\begin{equation}\label{cdt1}
	{J}''(u)v^2> 0 \quad \forall v\in \mathcal C_u\backslash\{0\}.
	\end{equation}
	Then,  there exist $\varepsilon>0$ and $\delta>0$ such that 
	\begin{equation}\label{cdt4}
	J(v)\geq J(u)+\frac{\delta}{2}\|v-u\|^2_{L^2(\Omega)}\quad \forall v\in \mathcal{U}\cap \bar{B}_{\varepsilon}^2(u),
	\end{equation}
	where $\bar{B}_{\varepsilon}^2(u)$ is the closed ball in $L^2(\Omega)$ with center $u$ and radius $\varepsilon$. 
\end{theorem}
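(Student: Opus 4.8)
I would prove the estimate \eqref{cdt4} by the classical contradiction argument (as in Casas--Tr\"oltzsch and in Theorem \ref{theo-514}), the new ingredients being the coercivity hypothesis \eqref{cond} and the lower-semicontinuity Lemma \ref{analysis}. Suppose \eqref{cdt4} fails for every $\delta,\varepsilon>0$. Then one may construct a sequence $(u_k)_{k\ge1}\subset\mathcal U$ with $\rho_k:=\|u_k-u\|_{L^2(\Omega)}>0$, $\rho_k\to0$, and $J(u_k)<J(u)+\frac{1}{2k}\rho_k^2$. Set $v_k:=\rho_k^{-1}(u_k-u)$, so $\|v_k\|_{L^2(\Omega)}=1$; passing to a subsequence, $v_k\rightharpoonup v$ weakly in $L^2(\Omega)$. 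Using Proposition \ref{diff4} one writes the second order Taylor expansion $J(u_k)=J(u)+J'(u)(u_k-u)+\tfrac12 J''(w_k)(u_k-u)^2$ with $w_k=u+\theta_k(u_k-u)\in\mathcal U$, $\theta_k\in(0,1)$, and by Lemma \ref{cont1} the bounds $|J'(w_k)v_k|\le C$, $|J''(w_k)v_k^2|\le C$ hold with $C$ independent of $k$.

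\textbf{Step 1: $v\in\mathcal C_u$.} The pointwise constraints $v_k\ge0$ a.e.\ on $\{u=\alpha\}$ and $v_k\le0$ a.e.\ on $\{u=\beta\}$ (immediate from $u_k,u\in[\alpha,\beta]$) pass to the weak limit because the corresponding subset of $L^2(\Omega)$ is convex and closed. Dividing the contradiction inequality by $\rho_k$ and invoking the first order condition \eqref{ineq} (which yields $J'(u)v_k\ge0$) gives $J'(u)v_k+\tfrac{\rho_k}{2}J''(w_k)v_k^2<\tfrac{\rho_k}{2k}$; letting $k\to\infty$ and using that $J'(u)\in(L^2(\Omega))^\star$ (Lemma \ref{cont1}) I get $J'(u)v\le0$, and combined with $J'(u)v_k\ge0$ this forces $J'(u)v=0$. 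The last condition in \eqref{eq1} then follows: the pointwise form of \eqref{ineq1} gives $d(x)(t-u(x))\ge0$ for a.e.\ $x$ and all $t\in[\alpha,\beta]$, hence $d\ge0$ on $\{u=\alpha\}$, $d\le0$ on $\{u=\beta\}$, $d=0$ on $\{\alpha<u<\beta\}$; combined with the sign of $v$ this yields $d\,v\ge0$ a.e., and since $\int_\Omega d\,v\,\dx=J'(u)v=0$ we obtain $d\,v=0$ a.e., i.e.\ $v=0$ wherever $d\neq0$. Thus $v\in\mathcal C_u$.

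\textbf{Step 2: passing to the limit in $J''(w_k)v_k^2$, and conclusion.} Dividing the contradiction inequality by $\rho_k^2$ and again dropping $J'(u)v_k\ge0$ gives $J''(w_k)v_k^2<\tfrac1k$, so $\limsup_k J''(w_k)v_k^2\le0$. For the lower bound, write $J''(w_k)v_k^2$ in Hamiltonian form \eqref{diff7} with $y,q,u$ replaced by $y_{w_k},q_{w_k},w_k$. Since $w_k\to u$ in $L^{\tilde p}(\Omega)$ (boundedness in $L^\infty$ and $\rho_k\to0$), Proposition \ref{convergence} and estimate \eqref{i3} give $y_{w_k}\to y$, $q_{w_k}\to q$ uniformly on $\bar\Omega$, and (subsequence) $w_k\to u$ a.e. Setting $z_k:=G'(w_k)v_k$, estimate \eqref{est8} makes $(z_k)$ bounded in $\V$, so $z_k\rightharpoonup z$ in $\V$ and strongly in $L^2(\Omega)$ by compactness of $\V\hookrightarrow L^2(\Omega)$; passing to the limit in \eqref{diff1} and using $v_k\rightharpoonup v$ with the uniform convergence of the coefficients identifies $z=G'(u)v$. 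The first two integrals in \eqref{diff7} then converge to $\int_\Omega\frac{\partial^2 H}{\partial y^2}(x,y,q,u)z^2\,\dx$ and $2\int_\Omega\frac{\partial^2 H}{\partial y\partial u}(x,y,q,u)vz\,\dx$ (strong$\times$strong, resp.\ strong$\times$weak, with uniformly bounded a.e.-convergent coefficients). For the last integral I split $\frac{\partial^2 H}{\partial u^2}(x,y_{w_k},q_{w_k},w_k)=g_k+\nu$ with $g_k\ge0$ by \eqref{cond}; since $g_k\to g:=\frac{\partial^2 H}{\partial u^2}(x,y,q,u)-\nu$ in $L^1(\Omega)$ and $v_k\rightharpoonup v$, Lemma \ref{analysis} gives $\int_\Omega g\,v^2\,\dx\le\liminf_k\int_\Omega g_k v_k^2\,\dx$, while $\nu\|v_k\|_{L^2(\Omega)}^2=\nu$. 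Collecting, $\liminf_k J''(w_k)v_k^2\ge J''(u)v^2+\nu\bigl(1-\|v\|_{L^2(\Omega)}^2\bigr)$. If $v\neq0$ then $\|v\|_{L^2(\Omega)}\le1$ and \eqref{cdt1} give $J''(u)v^2>0$, hence $\liminf_k J''(w_k)v_k^2>0$, contradicting $\limsup_k J''(w_k)v_k^2\le0$; if $v=0$ then $z=0$, the first two terms vanish, and $\liminf_k J''(w_k)v_k^2\ge\nu>0$, again a contradiction. Hence \eqref{cdt4} holds for suitable $\varepsilon,\delta>0$.

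\textbf{Main obstacle.} The delicate point is Step 2: the map $v\mapsto J''(\cdot)v^2$ is not weakly continuous because of the genuinely quadratic term $\int\frac{\partial^2 H}{\partial u^2}v_k^2$ in a weakly convergent sequence, so one cannot simply pass to the limit; this is precisely where the coercivity assumption \eqref{cond} and Lemma \ref{analysis} are indispensable, and one must carefully verify the decomposition $g_k+\nu$ together with $g_k\to g$ in $L^1(\Omega)$, which rests on the $L^\infty$-bounds \eqref{i0} and the a.e.\ convergence of the states and adjoint states from Proposition \ref{convergence} and \eqref{i3}. A secondary technical point is establishing the strong $L^2$-convergence $z_k\to G'(u)v$ and identifying the limiting linearized equation; here the compact embedding $\V\hookrightarrow L^2(\Omega)$ and estimate \eqref{est8} do the work, and $J',J''$ must be handled as continuous forms on $L^2(\Omega)$ as provided by Lemma \ref{cont1}.
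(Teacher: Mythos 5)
Your proof follows the paper's structure closely---contradiction argument, normalization, passage of the limit direction into $\mathcal C_u$, the Hamiltonian form \eqref{diff7} of $J''$, and the lower semicontinuity Lemma~\ref{analysis}---and your verification that the weak limit lies in $\mathcal C_u$ is in fact more careful than the paper's: the paper just asserts that each normalized increment satisfies all of \eqref{eq1}, but only the two sign conditions hold for them, and you correctly obtain the third condition from $J'(u)v=0$ combined with $d\,v\ge0$ a.e. However, there is a genuine gap at the heart of Step~2. You write $\frac{\partial^2 H}{\partial u^2}(x,y_{w_k},q_{w_k},w_k)=g_k+\nu$ and claim $g_k\ge 0$ by \eqref{cond}. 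That is not what \eqref{cond} gives: \eqref{cond} asserts $\frac{\partial^2 H}{\partial u^2}(x,y(x),q(x),t)\ge\nu$ for the \emph{reference} state $y$ and adjoint state $q$, with only the last slot $t$ free over $[\alpha,\beta]$; it says nothing about the sign of $\frac{\partial^2 H}{\partial u^2}$ once $(y,q)$ is replaced by the perturbed $(y_{w_k},q_{w_k})$. Since Lemma~\ref{analysis} requires $g_k\ge 0$, your application of it is unjustified, and the inequality $\liminf_k J''(w_k)v_k^2\ge J''(u)v^2+\nu\bigl(1-\|v\|_{L^2(\Omega)}^2\bigr)$ does not follow as written.

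The paper closes precisely this gap by the further decomposition in \eqref{diff12} and \eqref{k18}: one isolates $\frac{\partial^2 H}{\partial u^2}(x,y,q,w_k)$, which is $\ge\nu>0$ by \eqref{cond} and therefore serves as the admissible $g_k$ in Lemma~\ref{analysis}, while the remaining three terms---the differences of $\frac{\partial^2 L}{\partial u^2}$ and of $q\,\frac{\partial^2 F}{\partial u^2}$ evaluated at $y_{w_k}$ versus at $y$, together with $(q_{w_k}-q)\frac{\partial^2 F}{\partial u^2}$---are shown to tend to zero in $L^\infty(\Omega)$ using \eqref{as3}, \eqref{as8}, \eqref{i3} and the uniform convergence of the states. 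Substituting this decomposition for the unjustified sign claim repairs the argument; the rest of your Step~2, including the strong $L^2$-convergence of $G'(w_k)v_k$ and the case distinction $v=0$ versus $v\neq 0$ at the end, is sound.
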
	

\begin{proof}
We argue by contradiction by assuming that there exists a sequence $(v_k)_{k\geq 1}\subset \U$ such that
\begin{equation}\label{cdt5}
\|v_k-u\|_{L^2(\Omega)}<\frac{1}{k}\quad \text{and}\quad J(v_k)< J(u)+\frac{1}{2k}\|v_k-u\|^2_{L^2(\Omega)}\quad \forall k\geq 1.
\end{equation}
Let us define 
$$\rho_k:=\|v_k-u\|_{L^2(\Omega)}\quad\text{and}\quad w_k:=\frac{1}{\rho_k}(v_k-u) \quad \text{for}\;\; k\geq 1.$$
Since $\|w_k\|_{L^2(\Omega)}=1$, we can take a subsequence, denoted in the same way, such that $w_k\rightharpoonup w$ weakly in $L^2(\Omega)$, as $k\to\infty$.  Let us prove that $w\in \mathcal C_u$. We first note that $w_k$ satisfies \eqref{eq1}. Since the set of functions in $L^2(\Omega)$ satisfying \eqref{eq1} is closed and convex, it is weakly closed  and so $w$ satisfies \eqref{eq1}. Let us show that $J'(u)w=0$. From the optimality condition \eqref{ineq1}, we can deduce that
\begin{equation}\label{k1}
\dis J'(u)w=\lim_{k\to \infty}J'(u)w_k=\lim_{k\to \infty}\frac{1}{\rho_k}\int_{\Omega}\frac{\partial H}{\partial u}(x,y,q,u)(v_k-u)\,\dx\geq 0.
\end{equation}
From \eqref{cdt5} we have that, 
\begin{equation}\label{k4}
\left.
\begin{array}{lllll}
\dis J(v_k)-J(u)<\frac{1}{2k}\|v_k-u\|^2_{L^2(\Omega)}=\frac{\rho^2_k}{2k}.
\end{array}
\right.
\end{equation}	
On the other hand, we write the first-order expansion
\begin{equation*}
\left.
\begin{array}{lllll}
J(v_k)-J(u)&=&J'(v_{\theta_k})(v_k-u)\\
&=&\rho_kJ'(v_{\theta_k})w_k\\
&=& \dis \rho_k\int_{\Omega} \left(\frac{\partial L}{\partial u}(x,y_{\theta_k},v_{\theta_k})+ q_{\theta_k}\frac{\partial F}{\partial u}(x,y_{\theta_k},v_{\theta_k}) \right) w_k\,\dx,
\end{array}
\right.
\end{equation*}	
where $\theta_k\in (0,1)$, $v_{\theta_k}:=u+\theta_k(v_k-u)$, $y_{\theta_k}$ and $q_{\theta_k}$ are the state and the adjoint state associated to $v_{\theta_k}$, respectively.  Therefore, using \eqref{k4} we arrive to 
\begin{equation}\label{k2}
\left.
\begin{array}{lllll}
\dis J'(v_{\theta_k})w_k=\int_{\Omega} \left(\frac{\partial L}{\partial u}(x,y_{\theta_k},v_{\theta_k})+ q_{\theta_k}\frac{\partial F}{\partial u}(x,y_{\theta_k},v_{\theta_k}) \right) w_k\,\dx\leq \frac{\rho_k}{2k},\; \quad \forall k\geq 1.
\end{array}
\right.
\end{equation}	
Moreover, using \eqref{cdt5}, we have that,   
\begin{equation}\label{k3}
\dis\lim_{k\to\infty} \|v_{\theta_k}-u\|_{L^2(\Omega)}\leq \lim_{k\to\infty} \|v_{k}-u\|_{L^2(\Omega)}\le \lim_{k\to\infty}\frac{1}{k}= 0.
\end{equation}	
On the one hand, using the Mean Value Theorem, \eqref{as2} and \eqref{as7}, we get after some calculations, 
\begin{equation*}
\left\|\frac{\partial L}{\partial u}(x,y_{\theta_k},v_{\theta_k})-\frac{\partial L}{\partial u}(x,y,u)\right\|_{L^2(\Omega)}\leq C_{L,M}\left[\|y_{\theta_k}-y\|_{L^2(\Omega)}+\|v_{\theta_k}-u\|_{L^2(\Omega)}\right]
\end{equation*}
and 
\begin{equation*}
\left\|\frac{\partial F}{\partial u}(x,y_{\theta_k},v_{\theta_k})-\frac{\partial F}{\partial u}(x,y,u)\right\|_{L^2(\Omega)}\leq C_{F,M}\left[\|y_{\theta_k}-y\|_{L^2(\Omega)}+\|v_{\theta_k}-u\|_{L^2(\Omega)}\right].
\end{equation*}
Therefore, passing to the limit,  as $k\to \infty$, we arrive to 
\begin{equation}\label{estL1}
\frac{\partial L}{\partial u}(x,y_{\theta_k},v_{\theta_k})\to \frac{\partial L}{\partial u}(x,y,u) \quad\text{strongly in}\quad L^2(\Omega)
\end{equation}
and 
\begin{equation}\label{estF1}
\frac{\partial F}{\partial u}(x,y_{\theta_k},v_{\theta_k})\to \frac{\partial F}{\partial u}(x,y,u) \quad\text{strongly in}\quad L^2(\Omega).
\end{equation}
On the other hand, using again \eqref{k3}, we can deduce from \eqref{i3} with $\tilde{p}=2$ that, as $k\to\infty$, 
\begin{equation}\label{k5}
q_{\theta_k}\to q \quad\text{strongly in}\quad L^\infty(\Omega).
\end{equation}
Taking the limit in \eqref{k2},  as $k\to \infty$,  while using \eqref{estL1}-\eqref{k5} along with the weak convergence of $w_k$ and the weak-strong convergence, we can deduce that
$J'(u)w\leq 0$. Hence, using \eqref{k1}, we have that $J'(u)w=0$. Thus,  $w\in \mathcal C_u$.

Next, we show that $J''(u)w\leq 0$. 
Thanks to the second-order expansion and using again \eqref{ineq} and \eqref{k4}, we obtain that 
\begin{equation*}
\left.
\begin{array}{lllll}
\dis \frac{\rho^2_k}{2k}&=&J(v_k)-J(u)\\
&=& J'(u)(v_k-u)+\frac{1}{2}J''(u+\eta_k(v_k-u))(v_k-u)^2\\
&\geq& \dis \frac{1}{2}J''(u+\eta_k(v_k-u))(v_k-u)^2\\
&=& \dis \frac{\rho^2_k}{2}J''(u+\eta_k(v_k-u))w_k^2
\end{array}
\right.
\end{equation*}	
with $\eta_k\in (0,1)$.
Dividing both members of the above inequality by $\frac{\rho^2_k}{2}$, we arrive to 
\begin{equation}\label{k6}
\left.
\begin{array}{lllll}
\dis J''(u+\eta_k(v_k-u))w_k^2\leq \frac{1}{k} \quad \forall k\geq 1.
\end{array}
\right.
\end{equation}	
Set $v_{\eta_k}:=u+\eta_k(v_k-u)$ and denote by $y_{\eta_k}$ and $q_{\eta_k}$  the state and the adjoint state associated to $v_{\eta_k}$,  respectively. 
Using \eqref{cdt5}, we have that,  as $k\to \infty$,
\begin{equation}\label{k9}
\left.
\begin{array}{lllll}
\dis \|v_{\eta_k}-u\|_{L^2(\Omega)}\to 0.
\end{array}
\right.
\end{equation}
Therefore, from Proposition \ref{convergence} and \eqref{i3} (with $\tilde{p}=2$), we have that, as $k\to\infty$, 
\begin{equation}\label{k10}
y_{\eta_k}\to y\;\; \text{strongly in}\;\;\; \V\cap \mathcal{C}^{0,\sigma}(\bar{\Omega})
\end{equation}
and  
\begin{equation}\label{k11}
q_{\eta_k}\to q \quad\text{strongly in}\quad L^\infty(\Omega).
\end{equation}
In addition, using the weak convergence of $w_k$ in $L^2(\Omega)$, one can show that (up to a subsequence, if needed), as $k\to\infty$, 
\begin{equation}\label{k12}
G'(v_{\eta_k})w_k\to G'(u)w\;\; \text{strongly in}\;\;\; L^2(\Omega).
\end{equation}
Now, using \eqref{diff7}, we have that, 
\begin{align}\label{k7}
J''(v_{\eta_k})w_k^2 =&\dis \int_{\Omega} \frac{\partial^2 H}{\partial y^2}(x,y_{\eta_k},q_{\eta_k},v_{\eta_k})(G'(v_{\eta_k})w_k)^2\,\dx\notag \\
& \dis+ 2\int_{\Omega} \frac{\partial^2 H}{\partial y\partial u}(x,y_{\eta_k},q_{\eta_k},v_{\eta_k})(w_kG'(v_{\eta_k})w_k)\,\dx\notag\\
 &\dis +\int_{\Omega} \frac{\partial^2 H}{\partial u^2}(x,y_{\eta_k},q_{\eta_k},v_{\eta_k})w_k^2\,\dx.
\end{align}	

Next, we claim that, as $k\to\infty$, 
\begin{equation}\label{k14}
\frac{\partial^2 H}{\partial y^2}(x,y_{\eta_k},q_{\eta_k},v_{\eta_k})\to \frac{\partial^2 H}{\partial y^2}(x,y,q,u)\;\; \text{strongly in}\;\;\; L^\infty(\Omega)
\end{equation}
and
\begin{equation}\label{k15}
\frac{\partial^2 H}{\partial y\partial u}(x,y_{\eta_k},q_{\eta_k},v_{\eta_k})\to \frac{\partial^2 H}{\partial y\partial u}(x,y,q,u)\;\; \text{strongly in}\;\;\; L^\infty(\Omega).
\end{equation}
Indeed, using \eqref{as2}, \eqref{as3} and \eqref{as8} with $\varepsilon=\frac{1}{k}$, we obtain
\begin{equation*}
\begin{array}{lllll}
\dis \left\|\frac{\partial^2 H}{\partial y^2}(x,y_{\eta_k},q_{\eta_k},v_{\eta_k})- \frac{\partial^2 H}{\partial y^2}(x,y,q,u)\right\|_{L^\infty(\Omega)}&\leq& \dis\left\|\frac{\partial^2 L}{\partial y^2}(x,y_{\eta_k},v_{\eta_k})- \frac{\partial^2 L}{\partial y^2}(x,y,u)\right\|_{L^\infty(\Omega)}\\
&&\dis +\|q_{\eta_k}-q\|_{L^\infty(\Omega)}\left\|\frac{\partial^2 F}{\partial y^2}(x,y_{\eta_k},v_{\eta_k})\right\|_{L^\infty(\Omega)}\\
&&\dis + \|q\|_{L^\infty(\Omega)}\left\|\frac{\partial^2 F}{\partial y^2}(x,y_{\eta_k},v_{\eta_k})- \frac{\partial^2 F}{\partial y^2}(x,y,u)\right\|_{L^\infty(\Omega)}\\
&\leq& \dis  \frac{1}{k}(1+\|q\|_{L^\infty(\Omega)})+ C_{F,M}\|q_{\eta_k}-q\|_{L^\infty(\Omega)}.
\end{array}
\end{equation*}
Taking the limit,  as $k\to \infty$,  in the last inequality, while using \eqref{k11} leads us to \eqref{k14}. With similar arguments, we get \eqref{k15}. Hence, combining \eqref{k12}, \eqref{k14}, \eqref{k15} and thanks to the weak convergence $w_k\rightharpoonup w$ in $L^2(\Omega)$,  as $k\to\infty$, we can  deduce that, 
\begin{align}\label{k16}
&\dis\lim_{k\to \infty} \left[\int_{\Omega} \frac{\partial^2 H}{\partial y^2}(x,y_{\eta_k},q_{\eta_k},v_{\eta_k})(G'(v_{\eta_k})w_k)^2\,\dx + 2\int_{\Omega} \frac{\partial^2 H}{\partial y\partial u}(x,y_{\eta_k},q_{\eta_k},v_{\eta_k})(w_kG'(v_{\eta_k})w_k)\,\dx\right]\notag\\
=&\dis \int_{\Omega} \frac{\partial^2 H}{\partial y^2}(x,y,q,u)(G'(u)w)^2\,\dx + 2\int_{\Omega} \frac{\partial^2 H}{\partial y\partial u}(x,y,q,u)(wG'(u)w)\,\dx.
\end{align}	
 In addition, we can write
\begin{equation*}
\begin{array}{lllll}
\dis \frac{\partial^2 H}{\partial u^2}(x,y_{\eta_k},q_{\eta_k},v_{\eta_k})&=& \dis\frac{\partial^2 L}{\partial u^2}(x,y_{\eta_k},v_{\eta_k})- \frac{\partial^2 L}{\partial u^2}(x,y,v_{\eta_k})\\
&&\dis +(q_{\eta_k}-q)\frac{\partial^2 F}{\partial u^2}(x,y_{\eta_k},v_{\eta_k})\\
&&\dis + q\left[\frac{\partial^2 F}{\partial u^2}(x,y_{\eta_k},v_{\eta_k})- \frac{\partial^2 F}{\partial u^2}(x,y,v_{\eta_k})\right]\\
&&+\dis \frac{\partial^2 H}{\partial u^2}(x,y,q,v_{\eta_k}).
\end{array}
\end{equation*}
Using \eqref{as2}, \eqref{as3} and \eqref{as8} with $\varepsilon=\frac{1}{k}$, \eqref{i3} with $\tilde{p}=2$, and letting $k\to \infty$, we arrive to 
\begin{align}\label{k18}
 &\dis\left|\frac{\partial^2 L}{\partial u^2}(x,y_{\eta_k},v_{\eta_k})- \frac{\partial^2 L}{\partial u^2}(x,y,v_{\eta_k})\right| \leq \frac{1}{k}\to 0\notag\\
&\dis \left|(q_{\eta_k}-q)\frac{\partial^2 F}{\partial u^2}(x,y_{\eta_k},v_{\eta_k})\right|\leq C_{F,M}\|q_{\eta_k}-q\|_{L^\infty(\Omega)}\to 0\notag\\
&\dis \left|q\left[\frac{\partial^2 F}{\partial u^2}(x,y_{\eta_k},v_{\eta_k})- \frac{\partial^2 F}{\partial u^2}(x,y,v_{\eta_k})\right]\right|\leq \frac{1}{k}\|q\|_{L^\infty(\Omega)}\to 0.
\end{align}
Now,  letting $g_k:=\dis \frac{\partial^2 H}{\partial u^2}(x,y,q,v_{\eta_k})$, we have from \eqref{cond} that $g_k\geq 0$ a.e. in $\Omega$. Using \eqref{as2} and \eqref{as7}, we have that the sequence $(g_k)_{k\geq 1}$ is bounded in $L^\infty(\Omega)$. In addition, if we let $g:=\dis \frac{\partial^2 H}{\partial u^2}(x,y,q,u)$, then applying again \eqref{as3} and \eqref{as8} with $\varepsilon=\frac{1}{k}$ we obtain that,  as $k\to\infty$, 
\begin{equation*}
\begin{array}{lllll}
\dis \int_{\Omega}|g_k(x)-g(x)|\, \dx\leq \frac{|\Omega|}{k}(1+\|q\|_{L^\infty(\Omega)})\to 0.
\end{array}
\end{equation*}
Therefore,  $g_k\to g$ strongly in $L^1(\Omega)$, as $k\to\infty$. Since, $w_k\rightharpoonup w$ weakly in $L^2(\Omega)$,  as $k\to\infty$, we  can deduce from Lemma \ref{analysis} that 
\begin{equation}\label{k17}
\begin{array}{lllll}
\dis \int_{\Omega}\frac{\partial^2 H}{\partial u^2}(x,y,q,u)w^2\,\dx\leq \liminf_{k\to \infty}\int_{\Omega}\frac{\partial^2 H}{\partial u^2}(x,y,q,v_{\eta_k})w_k^2\,\dx.
\end{array}
\end{equation}
Combining \eqref{k18}-\eqref{k17}, we get
\begin{equation}\label{k20}
\dis \int_{\Omega}\frac{\partial^2 H}{\partial u^2}(x,y,q,u)w^2\,\dx\leq \liminf_{k\to \infty}\int_{\Omega}\frac{\partial^2 H}{\partial u^2}(x,y,q_{\eta_k},v_{\eta_k})w_k^2\,\dx.
\end{equation}
The inequality \eqref{k20} together with \eqref{k6}, \eqref{k7}  and  \eqref{k16} leads us to
\begin{equation}\label{k21}
J''(u)w^2\leq \liminf_{k\to \infty}J''(v_{\eta_k})w_k^2\leq 0.
\end{equation}
But,  according to \eqref{cdt1}, this is only possible if $w=0$. Therefore, $w_k\rightharpoonup 0$ weakly in $L^2(\Omega)$ and $G'(v_{\eta_k})w_k\to 0$ strongly in $L^2(\Omega)$, as $k\to\infty$. Thus,  
\begin{equation*}
\left.
\begin{array}{lllll}
\dis\lim_{k\to \infty} \left[\int_{\Omega} \frac{\partial^2 H}{\partial y^2}(x,y_{\eta_k},q_{\eta_k},v_{\eta_k})(G'(v_{\eta_k})w_k)^2\,\dx + 2\int_{\Omega} \frac{\partial^2 H}{\partial y\partial u}(x,y_{\eta_k},q_{\eta_k},v_{\eta_k})(w_kG'(v_{\eta_k})w_k)\,\dx\right]=0.
\end{array}
\right.
\end{equation*}	
Finally, combining \eqref{cond} with the latter convergence, \eqref{k18}, \eqref{k17}, \eqref{k7}, \eqref{k21} and thanks to the fact that $\|w_k\|_{L^2(\Omega)}=1$, we arrive to
\begin{equation*}
\left.
\begin{array}{lllll}
\dis \nu=\nu \liminf_{k\to \infty}\|w_k\|^2_{L^2(\Omega)}&\leq& \dis \int_{\Omega}\frac{\partial^2 H}{\partial u^2}(x,y,q,v_{\eta_k})w_k^2\,\dx\\
&\leq&\dis \liminf_{k\to \infty}\int_{\Omega}\frac{\partial^2 H}{\partial u^2}(x,y,q_{\eta_k},v_{\eta_k})w_k^2\,\dx\\
&\leq& J''(v_{\eta_k})w_k^2\leq 0.
\end{array}
\right.
\end{equation*}	
Hence, $\nu\le 0$, which contradicts the fact that $\nu>0$. This completes the proof.
\end{proof}

\bibliographystyle{abbrv}
\bibliography{references}

\begin{thebibliography}{10}

\bibitem{AKW}
H.~Antil, R.~Khatri, and M.~Warma.
\newblock External optimal control of nonlocal {PDE}s.
\newblock {\em Inverse Problems}, 35(8):084003, 35, 2019.

\bibitem{antil2017}
H.~Antil, J.~Pfefferer, and M.~Warma.
\newblock A note on semilinear fractional elliptic equation: analysis and
  discretization.
\newblock {\em ESAIM: Mathematical Modelling and Numerical Analysis},
  51(6):2049--2067, 2017.

\bibitem{AntilWarma}
H.~Antil and M.~Warma.
\newblock Optimal control of fractional semilinear {PDE}s.
\newblock {\em ESAIM Control Optim. Calc. Var.}, 26:Paper No. 5, 30, 2020.

\bibitem{aubin2009}
J.-P. Aubin and H.~Frankowska.
\newblock {\em Set-valued analysis}.
\newblock Springer Science \& Business Media, 2009.

\bibitem{caffarelli2016}
L.~A. Caffarelli and P.~R. Stinga.
\newblock Fractional elliptic equations, caccioppoli estimates and regularity.
\newblock {\em Annales de l'Institut Henri Poincar{\'e} C, Analyse non
  lin{\'e}aire}, 33(3):767--807, 2016.

\bibitem{casas1997}
E.~Casas.
\newblock Pontryagin's principle for state-constrained boundary control
  problems of semilinear parabolic equations.
\newblock {\em SIAM Journal on Control and Optimization}, 35(4):1297--1327,
  1997.

\bibitem{casas2000}
E.~Casas, J.-P. Raymond, and H.~Zidani.
\newblock Pontryagin's principle for local solutions of control problems with
  mixed control-state constraints.
\newblock {\em SIAM Journal on Control and Optimization}, 39(4):1182--1203,
  2000.

\bibitem{casas2012}
E.~Casas and F.~Tr{\"o}ltzsch.
\newblock Second order analysis for optimal control problems: improving results
  expected from abstract theory.
\newblock {\em SIAM Journal on Optimization}, 22(1):261--279, 2012.

\bibitem{casas2020o}
E.~Casas and F.~Tr{\"o}ltzsch.
\newblock On optimal control problems with controls appearing nonlinearly in an
  elliptic state equation.
\newblock {\em SIAM Journal on Control and Optimization}, 58(4):1961--1983,
  2020.

\bibitem{casas1995}
E.~Casas and J.~M. Yong.
\newblock Maximum principle for state-constrained optimal control problems
  governed by quasilinear elliptic.
\newblock {\em Differential and Integral equations}, 8(1):1--18, 1995.

\bibitem{nezza2012}
E.~Di~Nezza, G.~Palatucci, and E.~Valdinoci.
\newblock Hitchhiker's guide to fractional sobolev spaces.
\newblock {\em Bull. Sci. Math.}, 136(5):289--307, 2012.

\bibitem{eppler1997}
K.~Eppler and A.~Unger.
\newblock Boundary control of semilinear elliptic equations existence of
  optimal solutions.
\newblock {\em Control and Cybernetics}, 26:249--260, 1997.

\bibitem{grisvard2011}
P.~Grisvard.
\newblock {\em Elliptic Problems in Nonsmooth Domains}.
\newblock Society for Industrial and Applied Mathematics, 2011.

\bibitem{grubb2016}
G.~Grubb.
\newblock Regularity of spectral fractional dirichlet and neumann problems.
\newblock {\em Mathematische Nachrichten}, 289(7):831--844, 2016.

\bibitem{kamocki2021}
R.~Kamocki.
\newblock Optimal control of a nonlinear pde governed by fractional laplacian.
\newblock {\em Applied Mathematics \& Optimization}, 84(2):1505--1519, 2021.

\bibitem{KMW2022}
C.~Kenne, G.~Mophou, and M.~Warma.
\newblock Bilinear optimal control for a fractional diffusive equation.
\newblock {\em arXiv preprint arXiv:2210.17494}, 2022.

\bibitem{kien2022}
B.~Kien, V.~Fedorov, and T.~Phuong.
\newblock Optimal control problems governed by fractional differential
  equations with control constraints.
\newblock {\em SIAM Journal on Control and Optimization}, 60(3):1732--1762,
  2022.

\bibitem{otarola2022}
E.~Otarola.
\newblock Fractional semilinear optimal control: Optimality conditions,
  convergence, and error analysis.
\newblock {\em SIAM Journal on Numerical Analysis}, 60(1):1--27, 2022.

\bibitem{raymond1998}
J.-P. Raymond and H.~Zidani.
\newblock Pontryagin's principle for state-constrained control problems
  governed by parabolic equations with unbounded controls.
\newblock {\em SIAM Journal on Control and Optimization}, 36(6):1853--1879,
  1998.

\bibitem{raymond1999}
J.~P. Raymond and H.~Zidani.
\newblock Hamiltonian pontryagin's principles for control problems governed by
  semilinear parabolic equations.
\newblock {\em Applied Mathematics and Optimization}, 39(2):143--177, 1999.

\bibitem{RO-SJ-EX}
X.~Ros-Oton and J.~Serra.
\newblock The extremal solution for the fractional {L}aplacian.
\newblock {\em Calc. Var. Partial Differential Equations}, 50(3-4):723--750,
  2014.

\bibitem{fredi2010}
F.~Tr{\"o}ltzsch.
\newblock {\em Optimal control of partial differential equations: theory,
  methods, and applications}, volume 112.
\newblock American Mathematical Soc., 2010.

\bibitem{War}
M.~Warma.
\newblock The fractional relative capacity and the fractional {L}aplacian with
  {N}eumann and {R}obin boundary conditions on open sets.
\newblock {\em Potential Anal.}, 42(2):499--547, 2015.

\end{thebibliography}
	
\end{document}